\documentclass[paper=a4, fontsize=10pt]{amsart} 

\usepackage{mathalfa,amsmath,amsthm,amsfonts,amssymb}
\usepackage{framed,lambda}
\usepackage[english]{babel}
\usepackage{lipsum}
\usepackage[linktoc=all]{hyperref}
\usepackage[capitalise]{cleveref}
\usepackage{enumitem,imakeidx}
\usepackage{color,xcolor,graphicx,tikz,tikz-cd}
\usepackage{stmaryrd}
\usepackage[all]{xy}
\setlength\parindent{0pt}
\setlength\parskip{5pt}

\theoremstyle{plain}
\newtheorem{lemma}[subsection]{Lemma}
\newtheorem{theorem}[subsection]{Theorem}
\newtheorem*{theorem*}{Theorem}
\newtheorem{prop}[subsection]{Proposition}
\newtheorem{cor}[subsection]{Corollary}

\theoremstyle{definition}
\newtheorem{defn}[subsection]{Definition}

\theoremstyle{remark}

\newtheorem{remark}[subsection]{Remark}

\numberwithin{equation}{section} 
\numberwithin{figure}{section} 
\numberwithin{subsection}{section} 

\makeatletter
\let\c@equation\c@subsection
\makeatother

\makeatletter
\let\c@figure\c@subsection
\makeatother

\setlength\parindent{0pt} 

\def\tensor{\mathop{\otimes}}
\def\mid{\,\middle\vert\,}

\newif\ifdraft
\draftfalse
\ifdraft
\usepackage{pgfpages}
\usepackage{marginnote}
\pgfpagesuselayout{resize to}[a3paper]
\pagecolor[rgb]{0,0,0}
\color[rgb]{0.0,0.9,0.4}
\fi
\draftfalse

\newif\ifsure
\surefalse

\def\u{\ensuremath{\mathfrak u}}
\def\rw{\ensuremath R}
\def\lgl{\ensuremath{\mathfrak g}}
\def\gO{\ensuremath{L^+\gl}}
\def\gd{\ensuremath{{\mathcal P_d}}}
\def\con{\ensuremath{{N^*X_\J(w)}}}
\def\borel{\ensuremath B}
\def\Borel{\ensuremath{\mathcal B}}
\def\Para{\ensuremath{\mathcal P}}
\def\para{\ensuremath P} 
\def\gl{\ensuremath G}
\def\Ni{\ensuremath{\mathcal N}}
\def\gL{\ensuremath{L\gl}}
\def\roots#1.#2.{\ensuremath{\Delta^{#1}_{#2}}}
\def\proots{\ensuremath{\roots+.0.}}
\def\coroots{\ensuremath{\Lambda^\vee}}

\def\mod{\ensuremath{\mathrm{mod}\,}}
\def\dynk{\ensuremath{\mathcal D}}
\def\define{\ensuremath{\overset{\scriptscriptstyle\operatorname{def}}=}}
\def\ws{\ensuremath{w_\J}}
\def\J{\ensuremath{\mathcal J}}

\def\L{\ensuremath{\mathcal L}}

\title{Conormal Varieties on the Cominuscule Grassmannian}

\author{V. Lakshmibai}
\author{Rahul Singh} 


\begin{document}

\begin{abstract}
Let $G$ be a simply connected, almost simple group over an algebraically closed field $\mathbf k$, and $P$ a maximal parabolic subgroup corresponding to omitting a cominuscule root.
We construct a compactification $\phi:T^*G/P\rightarrow X(u)$, where $X(u)$ is a Schubert variety corresponding to the loop group $LG$.
Let $N^*X(w)\subset T^*G/P$ be the conormal variety of some Schubert variety $X(w)$ in $G/P$; hence we obtain that the closure of $\phi(N^*X(w))$ in $X(u)$ is a $B$-stable compactification of $N^*X(w)$. 
We further show that this compactification is a Schubert subvariety of $X(u)$ if and only if $X(w_0w)\subset G/P$ is smooth, where $w_0$ is the longest element in the Weyl group of $G$.
This result is applied to compute the conormal fibre at the zero matrix in any determinantal variety.
\end{abstract}

\maketitle 


\section{Introduction}
Given a quiver $Q$, let $\overline Q=Q\sqcup Q^{op}$ be the \emph{double} of $Q$, the quiver that has the same vertex set as $Q$ and whose set of edges is a disjoint union of the sets of edges of $Q$ and of $Q^{op}$, the \emph{opposite} quiver.
Thus, for any edge $e\in Q$, there is also a reverse edge $e^*\in Q^{op}\subset\overline Q$ with the same endpoints as $e$, but in the opposite direction.
For $\bf d$ a dimension vector, the quiver variety $Rep_{\bf d}\overline Q$ is naturally identified as the cotangent bundle of $Rep_{\bf d}Q$ (cf. \cite{ginzburg2009lectures}).
\par 
\par 
Orbit closures in $Rep_{\bf d}A_2$ (see \Cref{TBL:comin}) are called \emph{determinantal varieties}. 
Lakshmibai and Seshadri \cite{lakshmibai1978geometry} have identified determinantal varieties as open subsets of certain \emph{Schubert varieties} in the type $A$ Grassmannian.
Further, Strickland \cite{strickland1982conormal} has identified the conormal varieties of determinantal varieties as certain \emph{nilpotent orbit closures} in $T^*Rep_{\bf d}A_2=Rep_{\bf d}\overline A_2$.
On the other hand, Lusztig \cite{lusztig1990canonical} has identified nilpotent orbit closures in $Rep_{\bf d}\overline A_2$ (and more generally in $Rep\,\widetilde A_n$) as open subsets of Schubert varieties in the affine Grassmannian. 
Inspired by these results, Lakshmibai \cite{lakshmibai2016cotangent} has suggested an exploration of the relationship between the conormal varieties of Schubert varieties and the corresponding affine type Schubert varieties. 
\par 
\par 
Let \gl\ be a simply connected, almost simple algebraic group over an algebraically closed field $\mathbf k$.
We identify \gl\ as a Kac-Moody group corresponding to some irreducible finite type Dynkin diagram $\dynk_0$.
The loop group $LG\define G\left(\mathbf k[t,t^{-1}]\right)$ is then a Kac-Moody group corresponding to the \emph{extended Dynkin diagram} \dynk, obtained by attaching to $\dynk_0$ the extra root $\alpha_0$ (see \Cref{TBL:comin}).
\par 
\par 
A simple root $\alpha_d$ is \emph{cominuscule} if and only if there exists an automorphism $\iota$ of \dynk\ such that $\iota(\alpha_0)=\alpha_d$ (see \Cref{TBL:comin}). 
Let \para\ be the parabolic subgroup in \gl\ corresponding to omitting a cominuscule root $\alpha_d$, and \Para\ the parabolic subgroup in \gL\ corresponding to omitting both $\alpha_d$ and $\alpha_0$.
For $\mathbf k=\mathbb C$, Lakshmibai \cite{lakshmibai2016cotangent}, and Lakshmibai, Ravikumar, and Slofstra \cite{lakshmibai2015cotangent} have constructed a dense embedding $\phi$ of $T^*G/P$ into a Schubert variety in $LG/\Para$.
We use the Kac-Moody functor of Tits \cite{tits1987uniqueness} to give a definition of $\phi$ which works in all characteristics (see \Cref{fibreId} and \Cref{form:phip}).
\par 
\par 
Let $w_0$ denote the longest element in the Weyl group of $G$, and \ws\ the longest element in the Weyl group of $P$.
Our main result (\Cref{mainResult}) is the following:
\begin{theorem}
\label{introMain}
Let $X(w)$ be a Schubert variety in $G/P$, and let \con\ be its conormal variety.
Then the closure of $\phi(\con)$ in $\gL/\Para$ is a Schubert variety if and only if the Schubert variety $X(w_0w\ws)$ in $G/P$ is smooth.
\end{theorem}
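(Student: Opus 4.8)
The plan is to analyze the closure $Y \define \overline{\phi(N^*X_{\mathcal J}(w))}$ inside $L G/\mathcal P$ by comparing it to the Schubert variety it is "trying" to be. Since $\phi$ is a $B$-equivariant dense embedding of the cotangent bundle, $Y$ is an irreducible $B$-stable closed subvariety of $LG/\mathcal P$, hence a union of $B$-orbits (affine Schubert cells). It contains a unique dense $B$-orbit, whose closure is the Schubert variety $X(u_w)$ for a well-defined $u_w \in W_{\mathcal P} \backslash W_{L G}$; thus $Y = X(u_w)$ if and only if $X(u_w) \subseteq Y$, i.e. if and only if $Y$ is already closed under the Bruhat–Chevalley order restricted to the cells it meets. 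So the whole problem reduces to: (i) identify the class $u_w$ combinatorially, and (ii) decide when no "extra" smaller cells are forced into $X(u_w)$ beyond those actually hit by $\phi(N^*X_{\mathcal J}(w))$.

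**First** I would pin down $u_w$. The conormal variety $N^*X_{\mathcal J}(w)$ has dimension $\dim G/P$, the same as every Schubert variety $X(w)$ in $G/P$, so $\dim Y = \dim G/P$ and $\ell(u_w) = \dim G/P$ for every $w$ — in particular the affine Schubert varieties $X(u_w)$ that can arise all have the fixed dimension $\dim G/P = \ell(w_0 w_{\mathcal J})$ (the dimension of the big cell). The natural candidate, forced by $B$-equivariance and the known description of $\phi$ on torus-fixed points (\Cref{fibreId}, \Cref{form:phip}), is that $u_w$ is the minimal-length coset representative attached to the pair $(w, w_0 w w_{\mathcal J})$ via the cominuscule combinatorics — concretely, reading off which $T$-fixed point of $N^*X_{\mathcal J}(w)$ sits in the open $B$-orbit and applying $\phi$ to it. I would make this explicit using the identification of $T^*G/P$ with an affine cell and the automorphism $\iota$ of $\mathcal D$ swapping $\alpha_0$ and $\alpha_d$.

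**The key step** is the smoothness criterion. Here I would use that $Y$, being the closure of a single $B$-orbit of dimension $\ell(u_w)$, is always contained in $X(u_w)$, with equality iff $\dim Y = \dim X(u_w)$ forces no defect — equivalently iff $Y$ meets every $B$-orbit in $X(u_w)$. The orbits in $X(u_w)$ are indexed by $\{v \le u_w\}$; the orbits met by $\phi(N^*X_{\mathcal J}(w))$ are, via $\phi$, the images of the $T$-fixed points of $N^*X_{\mathcal J}(w)$, which (by Strickland-type / conormal combinatorics) biject with pairs of Schubert cells $X_{\mathcal J}(v') \subseteq X_{\mathcal J}(w)$ such that the conormal direction is "allowed," i.e. with the cells of $X_{\mathcal J}(w)$ together with a compatibility with $X_{\mathcal J}(w_0 w w_{\mathcal J})^{\mathrm{op}}$. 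The point is that this set of pairs exhausts $\{v \le u_w\}$ precisely when the poset of cells in $X_{\mathcal J}(w_0 w w_{\mathcal J})$ is "as large as possible" locally — and this is exactly the statement that $X(w_0 w w_{\mathcal J})$ is smooth, since smoothness of a Schubert variety in $G/P$ is detected (Carrell–Peterson, or the cominuscule pattern-avoidance of Lakshmibai–Weyman–etc.) by its Bruhat interval being rank-symmetric / by the absence of the relevant "bad" subintervals. So I would: (a) show $Y = X(u_w)$ $\iff$ the set of $B$-orbits of $LG/\mathcal P$ hit by $\phi(N^*X_{\mathcal J}(w))$ equals $\{v : v \le u_w\}$; (b) translate the left side, via $\phi^{-1}$ and the conormal description, into a statement purely about the interval structure below $w$ and below $w_0 w w_{\mathcal J}$ in $W/W_P$; (c) match that with the known smoothness criterion for $X(w_0 w w_{\mathcal J})$.

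**The main obstacle** I anticipate is step (b): controlling exactly which affine Schubert cells the conormal variety sweeps through under $\phi$, and showing the gap between that set and the full interval $\{v \le u_w\}$ is governed by singularities of $X(w_0 w w_{\mathcal J})$ rather than of $X(w)$ itself. This requires a careful local analysis of $\phi$ near the non-generic $T$-fixed points — equivalently, understanding the conormal variety $N^*X_{\mathcal J}(w)$ near its singular locus — and I expect the cominuscule hypothesis to be essential precisely here, since it makes $T^*G/P$ a "minuscule-type" affine Schubert cell and forces the conormal combinatorics to be multiplicity-free. A secondary technical point is checking $B$-stability and irreducibility of $Y$ and that the closure commutes with $\phi$ appropriately in arbitrary characteristic, for which I would lean on the Kac–Moody / Tits functor setup cited in the introduction.
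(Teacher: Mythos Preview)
Your proposal has a genuine error at its core. You assert that $\ell(u_w)=\dim G/P$ for every $w$, but this is exactly what can fail, and is the heart of the theorem. The minimal Schubert variety $X_{\mathcal J}(u_w)$ containing $Y=\overline{\phi(N^*X_{\mathcal J}(w))}$ satisfies $\ell(u_w)\geq\dim Y=\dim G/P$, and since both are irreducible and closed, $Y$ is a Schubert variety if and only if equality holds. Were $\ell(u_w)=\dim G/P$ always true, $Y$ would always be a Schubert variety and there would be nothing to prove. A related confusion: Schubert cells in $LG/\mathcal P$ are $\mathcal B$-orbits, not $B$-orbits; $Y$ is only $B$-stable, so your claim that the dense $B$-orbit in $Y$ closes up to a Schubert variety is unjustified.

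The paper does not attempt your cell-by-cell incidence analysis. Instead it identifies $u_w$ via the \emph{Demazure product}: from \Cref{prop:uw} and the fact that $\phi(\mathfrak g^R)$ is dense in $v^{-1}X_{\mathcal J}(v)$ (where $v={}^\iota(w_0w\ws)$), the generic point of $\phi(N^*X_{\mathcal J}(w))$ lands in $\mathcal Bwv^{-1}\mathcal Bv\mathcal P/\mathcal P$, so $u_w=wv^{-1}\star v$. A length computation using $w\in W^d$, $v\in W_d$, and $v\star\ws=v\ws$ reduces the equality $\ell(u_w)=\dim G/P$ to the identity $v^{-1}\star v\ws=v\ws$. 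The decisive ingredient, entirely absent from your sketch, is that this Demazure-product identity is one of the equivalent characterizations of smoothness of a cominuscule Schubert variety $X_{\mathcal J}(v)$ proved in \Cref{sb} (building on Billey--Mitchell); since $X_{\mathcal J}(v)\cong X_{\mathcal J}(w_0w\ws)$ via $\iota$, the theorem follows. The argument is thus a pure dimension count plus Weyl-group combinatorics with $\star$; no analysis of $T$-fixed points, Bruhat intervals, or non-generic strata of the conormal variety is needed.
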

Billey and Mitchell \cite{billey2010smooth} have given a combinatorial criterion to identify smooth Schubert varieties in cominuscule Grassmannians (see \Cref{sb}).
Using this, we deduce that \Cref{introMain} applies to determinantal varieties, symmetric determinantal varieties, and skew-symmetric determinantal varieties, which Lakshmibai and Seshadri \cite{lakshmibai1978geometry} have identified as open subsets of certain Schubert sub-varieties of cominuscule Grassmannians of type $A$, $C$, and $D$ respectively.
\par 
\par 
Let $\overline\Sigma_r^{sk,n}$ denote the \emph{rank $r$ skew-symmetric determinantal variety}:
\begin{align*}
    \overline\Sigma_r^{sk,n}&=\left\{A\in Hom(k^n,k^n)\mid A=-A^T,\,rank(A)\leq r\right\} 
\end{align*}
Our second result (\Cref{fibreDet}) identifies the conormal fibre at $0$ of the skew-symmetric determinantal variety.
\begin{theorem}\label{thm1.2}
The conormal fibre of $\overline\Sigma_r^{sk,n}$ at $0$ is isomorphic to $\overline\Sigma_{\overline n-r}^{sk,n}$ where\begin{align*}
\overline n=\begin{cases} n&\text{if $n$ is even,}\\n-1&\text{if $n$ is odd.}\end{cases}
\end{align*}
\end{theorem}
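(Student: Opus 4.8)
The plan is to compute the conormal fibre directly in the opposite big cell of a suitable cominuscule Grassmannian, and then to re-derive the same answer within the paper's framework, which also yields it in every characteristic. We may assume $r=2k$ is even, since $\overline\Sigma_{2k+1}^{sk,n}=\overline\Sigma_{2k}^{sk,n}$ and the asserted answer is unchanged. Following Lakshmibai--Seshadri, realise $\overline\Sigma_{2k}^{sk,n}$ as the open subset $X(w)\cap\Omega$ of a Schubert variety $X(w)$ in a type-$D$ cominuscule Grassmannian $\gl/\para$, where $\Omega$ is the opposite big cell: an affine space that is $\mathrm{GL}_n$-equivariantly isomorphic to the tangent space $T_{e\para}(\gl/\para)$ and to the space of skew-symmetric $n\times n$ matrices. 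Under this identification the $\mathrm{GL}_n$-orbit closures in $\Omega$ are exactly the $\overline\Sigma_s^{sk,n}$ with $s$ even, the origin (the zero matrix) is the base point $e\para$, and the smooth locus of $\overline\Sigma_{2k}^{sk,n}$ is the open orbit $\{\,\mathrm{rank}=2k\,\}$. By \Cref{fibreId} and \Cref{form:phip} the map $\phi$, which is defined in every characteristic, carries $N^*X(w)\subseteq T^*(\gl/\para)$ into the Schubert variety $X(u)\subseteq\gL/\Para$.

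First I would compute the fibre directly. The conormal fibre at $0$ is $N^*X(w)\cap T^*_{e\para}(\gl/\para)$; being the fibre over the vertex of the conormal variety of the $\mathrm{GL}_n$-stable cone $\overline\Sigma_{2k}^{sk,n}$, it is a $\mathrm{GL}_n$-stable closed subvariety of $T^*_{e\para}(\gl/\para)$, which is again a copy of the space of skew-symmetric $n\times n$ matrices, and hence equals $\overline\Sigma_s^{sk,n}$ for a unique even $s$; only the value of $s$ is in doubt. Since $\overline\Sigma_{2k}^{sk,n}$ is a cone and scaling by a non-zero scalar is an automorphism of it preserving the smooth locus, the tangent space is constant along each punctured ray through the smooth locus, so the conormal fibre over the vertex equals the closure of $\bigcup_A N^*_A\overline\Sigma_{2k}^{sk,n}$ taken over all smooth points $A$. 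A routine tangent-space computation identifies $N^*_A\overline\Sigma_{2k}^{sk,n}$ with $\wedge^2(\ker A)$, the skew forms supported on the $(n-2k)$-dimensional subspace $\ker A$; these have rank at most $n-2k$, hence at most $\overline n-2k$, because the rank of a skew form is even. As $A$ runs over the rank-$2k$ skew matrices, $\ker A$ runs over all $(n-2k)$-dimensional subspaces, and $\bigcup_W\wedge^2 W$ over all such $W$ is exactly the closed set of skew forms of rank $\le\overline n-2k$; therefore $s=\overline n-2k$, which settles the statement in characteristic $0$.

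To make \Cref{introMain} genuinely bear on the result, and to reach all characteristics uniformly --- in characteristic $2$ one must read ``skew-symmetric'' as ``alternating'', and type $D$ is then delicate --- I would instead run this identification through $\phi$. The conormal fibre at $0$ is recovered inside $X(u)$ by intersecting $\overline{\phi(N^*X(w))}$ with the fibre of the compactified cotangent projection over $e\para$ and restricting to the chart isomorphic to $T^*_{e\para}(\gl/\para)$; that fibre is an affine-Grassmannian Schubert variety, on which the Lakshmibai--Seshadri and Lusztig dictionary realises $\Borel$-stable closed subvarieties as nilpotent-orbit closures, i.e.\ as skew determinantal varieties. The Billey--Mitchell criterion (\Cref{sb}) decides whether $X(w_0w\ws)$ is smooth, hence by \Cref{introMain} whether $\overline{\phi(N^*X(w))}$ is itself a Schubert variety; in either case, reading off the strict partition that cuts out the relevant fibre identifies it with $\overline\Sigma_{\overline n-2k}^{sk,n}$.

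The hard part will be this last piece of bookkeeping: transporting the shifted Young-diagram data of $X(w)\subseteq\gl/\para$ through $\phi$ to affine-Weyl data for $X(u)$, and then back to the rank of a skew form, all while tracking the parity of $n$ --- which is precisely the source of the case distinction $\overline n=n$ versus $\overline n=n-1$, since a skew-symmetric form on an odd-dimensional space can never be non-degenerate. The characteristic-$0$ computation above is essentially immediate; the genuine work is to verify that it persists in characteristic $2$ and that the relevant affine-Grassmannian fibre is cut out by exactly the expected strict partition.
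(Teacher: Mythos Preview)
Your first computation is correct and, contrary to your hedge, already characteristic-free: the rank-$2k$ locus is a smooth $\mathrm{GL}_n$-orbit in every characteristic, the identification $N^*_A\overline\Sigma_{2k}^{sk,n}\cong\wedge^2(\ker A)$ is pure linear algebra, and the $\mathbb G_m$-contraction argument showing that the fibre over the vertex equals $\overline{\bigcup_A N^*_A}$ is formal. Since $\mathrm{GL}_n$ acts on alternating matrices with totally ordered orbit closures, every closed $\mathrm{GL}_n$-stable subset is automatically a single $\overline\Sigma_s^{sk,n}$, so no separate irreducibility argument is needed. This already proves the theorem outright (reading ``skew-symmetric'' as ``alternating'' in characteristic~$2$), and it is genuinely more elementary than the paper's proof.

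The paper takes a different route. It checks by a one-line permutation computation that $w_0w_rw_\J=w^\J_{\mathcal L}$ for $\mathcal L=\{\alpha_{r+1},\dots,\alpha_n\}$, so $X_\J(w_0w_rw_\J)$ is smooth by \Cref{sb}, and then invokes \Cref{conFibreId} to write $\phi(N^*_0X_\J(w_r))=\bigcup_u X^-_\J(u)$ over $u\in W_d^0$ with $u\le(w_rv_r)^{\dynk_0}$. The substantive work---precisely the ``bookkeeping'' you defer---is the identity $(w_rv_r)^{\dynk_0}={}^\iota w_{\overline n-r}$, proved by expressing $w_r$ and $w_0w_rw_\J$ as products of explicit elements $x_i\in W_0$ and commuting them past one another via braid relations and the involution $\iota$. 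Your argument is shorter and makes the parity correction $\overline n$ transparent; the paper's argument realises the conormal fibre \emph{as} an affine Schubert opposite cell under $\phi$, which is what connects the result to \Cref{mainResult}, \Cref{main1}, and \Cref{main2}. Your second sketch is the paper's path, but without that Weyl-group identity it is a plan rather than a proof.
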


Similar results are known for determinantal (see \cite{strickland1982conormal,gaffney2014pairs}) and symmetric determinantal varieties (see \cite{mich}).
Our proof of \Cref{fibreDet} can be adapted in a straightforward manner to recover these results.
\par 
\par 
The paper is arranged as follows.
In Section 2, we recall the basics of (finite and affine type) root systems and the corresponding almost simple groups.
We also describe how extending a finite type root system by attaching an extra root in a manner prescribed in \cite{kac1994infinite} corresponds to replacing the corresponding almost simple group with its loop group.
Finally, we recall some results on Weyl groups and Schubert varieties.
\par 
\par 
In Section 3, we show that the cotangent bundle of a cominuscule Grassmannian has a compactification $\phi$ by an affine Schubert variety.
Along the way, we construct (\Cref{defn:iota}) an involution $\iota$ of affine type Dynkin diagrams that exchanges a cominuscule root $\alpha_d$ with the \emph{extra} root $\alpha_0$.
The involution $\iota$ acts on the associated Weyl group by conjugation (\Cref{iotaConj}).
\par 
\par 
In Section 4, we study the conormal variety \con\ of a Schubert variety $X_\J(w)$ in a cominuscule Grassmannian $G/P$.
We leverage the main result of \cite{billey2010smooth} to develop characterizations of smooth Schubert varieties in $G/P$, see \Cref{sb}.
We then use this to prove that \con\ has a compactification as a Schubert variety via the embedding $\phi$ if and only if the Schubert variety $X_\J(w_0w\ws)$ is smooth, see \Cref{mainResult}.
This yields powerful results about the geometry of \con\ when $X_\J(w_0w\ws)$ is smooth, see \Cref{main1}.
Further, using Littelmann's work \cite{littelmann2003bases} on the standard monomial theory of affine Schubert varieties, we can write down the equations defining \con\ as a subvariety of $T^*G/P$, see \Cref{main2}.
In \Cref{conFibreId}, we give a description of the fibre of \con\ at identity as a union of Schubert varieties. 
\par 
\par 
In Section 5, we apply the results of Section 4 to skew-symmetric determinantal varieties.
The (usual, symmetric, skew-symmetric resp.) rank $r$ determinantal varieties can be identified as the opposite cells of certain Schubert varieties $X_\J(w_r)$ in certain cominuscule Grassmannian (of type $A$, $C$, $D$ resp.).
Working in type D, we first verify that $X_\J(w_0ww_r)$ is smooth (\Cref{wlj}); hence \Cref{conFibreId} applies.
We then make explicit computations in the Weyl group (\Cref{intersectw}) to prove that the fibre at the zero matrix of the skew-symmetric determinantal variety is the rank $\overline n-r$ skew-symmetric determinantal variety, see \Cref{thm1.2}.
\par 
\par 
\emph{Acknowledgments}: We thank Terence Gaffney for fruitful discussions that pointed us towards the results in \Cref{sec:det}.

\ifdraft\marginpar{setup.tex}\fi

\section{Dynkin Diagrams and Weyl Groups} 
\label{setup}
In this section, we recall the basics of the theory of finite type and extended Dynkin diagrams, their root systems, and certain Kac-Moody groups associated to them.
Throughout, we assume that the base field $\mathbf k$ is algebraically closed.
The primary references for the combinatorial results in this section are \cite{BourbakiNicolas2008Lgal,kumar2012kac,tits1987uniqueness}.
For the geometric results, one may refer to \cite{faltings2003algebraic,kumar2012kac}.

\subsection{Finite Type Dynkin Diagrams}
Let $\dynk_0$ be an irreducible \emph{finite type} Dynkin diagram, and \roots.0. the \emph{abstract root system} associated to $\dynk_0$.
We shall denote by \roots+.0., \roots-.0., $\dynk_0$, $\theta_0$, $\mathbb Z\dynk_0$, and $W_0$, the positive roots, negative roots, simple roots, highest root, root lattice, and the Weyl group of \roots.0. respectively.

\subsection{Extended Dynkin Diagram}
We can attach a simple root $\alpha_0$ to $\dynk_0$ to get the \emph{extended Dynkin diagram} \dynk\ (see \cite{kac1994infinite},\Cref{TBL:comin}).
Let \roots.., \roots+.., \roots-.., \dynk, $\mathbb Z\dynk$, and $W$ denote the set of roots, positive roots, negative roots, simple roots, root lattice, and the Weyl group respectively of the abstract root system of \dynk.  

\subsection{Real and Imaginary Roots}
A root $\alpha\in\roots..$ is called a \emph{real root} if there exists $w\in W$ such that $w(\alpha)\in\dynk$; otherwise $\alpha$ is called an \emph{imaginary root}.
The root $\delta\define\alpha_0+\theta_0$ is called the \emph{basic imaginary root}.
The set of real (resp. positive) roots \roots.\mathrm{re}. (resp. \roots+..) has the following characterization in terms of $\delta$:\begin{align*}
    \roots.\mathrm{re}. &=\left\{\alpha+n\delta\mid\alpha\in\roots.0.,\,n\in\mathbb Z\right\}\\
    \roots+..           &=\left\{\alpha+n\delta\mid\alpha\in\roots.0.\sqcup\left\{0\right\},\,n>0\right\}\bigsqcup\roots+.0.
\end{align*}

\begin{table}

  \begin{tikzpicture}[scale=.4]
    \draw (-1.5,0) node[anchor=east]  {$A_1$};
    \foreach \x in {4}
    \draw[thick,fill=black!70] (\x cm,0) circle (.3cm);
    \draw (4,.8) node {$\scriptscriptstyle{1}$};

    \draw (14.4,0) node[anchor=east]  {$\widetilde{A}_{1}$};
    \draw (25.2,0) node[anchor=west]  {\ };
    \foreach \x in {19,21}
    \draw[thick,fill=black!70] (\x cm,0) circle (.3cm);
    \foreach \y in {19.3}
    \draw[thick] (\y cm,0) -- +(1.4 cm,0);
    \draw (19,.8) node {$\scriptscriptstyle{1}$};
    \draw (21,.8) node {$\scriptscriptstyle{0}$};
    \draw (20,.4) node {$\scriptscriptstyle\infty$};
  \end{tikzpicture}

\vspace{3mm}

  \begin{tikzpicture}[scale=.4]
    \draw (-1.5,0) node[anchor=east]  {$A_{n}$};
    \foreach \x in {0,2,4,6,8}
    \draw[thick,fill=black!70] (\x cm,0) circle (.3cm);
    \draw[dotted, thick] (0.3 cm,0) -- +(1.4 cm,0);
    \foreach \y in {2.3,4.3}
    \draw[thick] (\y cm,0) -- +(1.4 cm,0);
    \draw[dotted, thick] (6.3 cm,0) -- +(1.4 cm,0);
    \draw (0,.8) node {$\scriptscriptstyle{1}$};
    \draw (2,.8) node {$\scriptscriptstyle{d-1}$};
    \draw (4,.8) node {$\scriptscriptstyle{d}$};
    \draw (6,.8) node {$\scriptscriptstyle{d+1}$};
    \draw (8,.8) node {$\scriptscriptstyle{n}$};

    \draw (14.4,0) node[anchor=east]  {$\widetilde{A}_{n}$};
    \draw (14,-1.3) node[anchor=east]  {$\scriptstyle{n\geq 2}$};
    \draw (25,0) node[anchor=west]  {\ };
    \foreach \x in {16,18,20,22,24}
    \draw[thick,fill=black!70] (\x cm,0) circle (.3cm);
    \draw[thick, fill=black!70] (20 cm,-2 cm) circle (.3cm);
    \draw[dotted,thick] (16.3 cm,0) -- +(1.4 cm,0);   
    \foreach \y in {18.3,20.3}
    \draw[thick] (\y cm,0) -- +(1.4 cm,0);
    \draw[dotted,thick] (22.3 cm,0) -- +(1.4 cm,0); 
    \draw[thick] (16.2 cm,-0.2 cm) -- +(3.5 cm,-1.65 cm); 
    \draw[thick] (20.3 cm,-1.9 cm) -- +(3.5 cm,1.65 cm); 
    \draw (16,.8) node {$\scriptscriptstyle{1}$};
    \draw (18,.8) node {$\scriptscriptstyle{d-1}$};
    \draw (20,.8) node {$\scriptscriptstyle{d}$};
    \draw (22,.8) node {$\scriptscriptstyle{d+1}$};
    \draw (24,.8) node {$\scriptscriptstyle{n}$};
    \draw (20,-1.2) node {$\scriptscriptstyle{0}$};
    
  \end{tikzpicture}
  \vspace{2mm}

  \begin{tikzpicture}[scale=.4]
    \draw (-2,0) node[anchor=east]  {$B_{n}$};
    \draw (-2,-1.3) node[anchor=east]  {$\scriptstyle{n\geq 2}$};
    \foreach \x in {1.5,3.5,5.5,7.5}
    \draw[thick,fill=white!70] (\x cm,0) circle (.3cm);
    \draw[thick,fill=black!70] (-0.5 cm,0) circle (.3cm);
    \draw[thick] (-0.2 cm,0) -- +(1.4 cm,0);
    \draw[dotted,thick] (1.8 cm,0) -- +(1.4 cm,0);
    \draw[thick] (3.8 cm,0) -- +(1.4 cm,0);
    \draw[thick] (5.8 cm, .1 cm) -- +(1.4 cm,0);
    \draw[thick] (5.8 cm, -.1 cm) -- +(1.4 cm,0);
    \draw[thick] (6.4 cm, .3 cm) -- +(.3 cm, -.3 cm);
    \draw[thick] (6.4 cm, -.3 cm) -- +(.3 cm, .3 cm);
    \draw (-0.5,.8) node {$\scriptscriptstyle{1}$};
    \draw (1.5,.8) node {$\scriptscriptstyle{2}$};
    \draw (3.5,.8) node {$\scriptscriptstyle{n-2}$};
    \draw (5.5,.8) node {$\scriptscriptstyle{n-1}$};
    \draw (7.5,.8) node {$\scriptscriptstyle{n}$};
    
    \draw (14,0) node[anchor=east]  {$\widetilde{B}_{n}$};
    \draw (25,0) node[anchor=west]  {\ };
    \draw (14,-1.5) node[anchor=east]  {$\scriptstyle{n\geq 3}$};
    \foreach \x in {18,20,22,24}
    \draw[thick,fill=white!70] (\x cm,0) circle (.3cm);
    \draw[xshift=15 cm,thick,fill=black!70] (30: 17 mm) circle (.3cm);
    \draw[xshift=15 cm,thick,fill=black!70] (-30: 17 mm) circle (.3cm);
    \draw[dotted,thick] (18.3 cm,0) -- +(1.4 cm,0);
    \foreach \y in {20.3}
    \draw[thick] (\y cm,0) -- +(1.4 cm,0);
    \draw[thick] (22.3 cm, .1 cm) -- +(1.4 cm,0);
    \draw[thick] (22.3 cm, -.1 cm) -- +(1.4 cm,0);
    \draw[thick] (22.9 cm, .3 cm) -- +(.3 cm, -.3 cm);
    \draw[thick] (22.9 cm, -.3 cm) -- +(.3 cm, .3 cm);
    \draw[xshift=17.5 cm,thick] (30: 3 mm) -- (136: 10.5 mm);
    \draw[xshift=17.5 cm,thick] (-30: 3 mm) -- (-136: 10.5 mm);
    \draw (16.5,1.5) node {$\scriptscriptstyle{1}$};
    \draw (16.5,-.2) node {$\scriptscriptstyle{0}$};
    \draw (18,.8) node {$\scriptscriptstyle{2}$};
    \draw (20,.8) node {$\scriptscriptstyle{n-2}$};
    \draw (22,.8) node {$\scriptscriptstyle{n-1}$};
    \draw (24,.8) node {$\scriptscriptstyle{n}$};
  \end{tikzpicture}
  
    \vspace{3mm}

  \begin{tikzpicture}[scale=.4]
    \draw (-1,0) node[anchor=east]  {$C_{n}$};
    \draw (-1,-1.3) node[anchor=east]  {$\scriptstyle{n\geq 2}$};
    \foreach \x in {0.5,2.5,4.5,6.5}
    \draw[thick,fill=white!70] (\x cm,0) circle (.3cm);
    \draw[thick,fill=black!70] (8.5 cm,0) circle (.3cm);
    \draw[thick] (0.8 cm,0) -- +(1.4 cm,0);
    \draw[dotted,thick] (2.8 cm,0) -- +(1.4 cm,0);
    \draw[thick] (4.8 cm,0) -- +(1.4 cm,0);
    \draw[thick] (6.8 cm, .1 cm) -- +(1.4 cm,0);
    \draw[thick] (6.8 cm, -.1 cm) -- +(1.4 cm,0);
    \draw[thick] (7.4 cm, 0 cm) -- +(.3 cm, .3 cm);
    \draw[thick] (7.4 cm, 0 cm) -- +(.3 cm, -.3 cm);
    \draw (0.5,.8) node {$\scriptscriptstyle{1}$};
    \draw (2.5,.8) node {$\scriptscriptstyle{2}$};
    \draw (4.5,.8) node {$\scriptscriptstyle{n-2}$};
    \draw (6.5,.8) node {$\scriptscriptstyle{n-1}$};
    \draw (8.5,.8) node {$\scriptscriptstyle{n}$};
    
    \draw (15,0) node[anchor=east]  {$\widetilde{C}_{n}$};
    \draw (15,-1.3) node[anchor=east]  {$\scriptstyle{n\geq 2}$};
    \foreach \x in {16,18,20,22,24,26}
    \draw[thick,fill=white!70] (\x cm,0) circle (.3cm);
    \foreach \x in {16,26}
    \draw[thick,fill=black!70] (\x cm,0) circle (.3cm);
    \draw[dotted,thick] (20.3 cm,0) -- +(1.4 cm,0);
    \foreach \y in {18.3,22.3}
    \draw[thick] (\y cm,0) -- +(1.4 cm,0);
    \draw[thick] (24.3 cm, .1 cm) -- +(1.4 cm,0);
    \draw[thick] (24.3 cm, -.1 cm) -- +(1.4 cm,0);
    \draw[thick] (24.9 cm, 0 cm) -- +(.3 cm, .3 cm);
    \draw[thick] (24.9 cm, 0 cm) -- +(.3 cm, -.3 cm);
    \draw[thick] (16.3 cm, .1 cm) -- +(1.4 cm,0);
    \draw[thick] (16.3 cm, -.1 cm) -- +(1.4 cm,0);
    \draw[thick] (17.2 cm, 0 cm) -- +(-.3 cm, .3 cm);
    \draw[thick] (17.2 cm, 0 cm) -- +(-.3 cm, -.3 cm);
    \draw (16,.8) node {$\scriptscriptstyle{0}$};
    \draw (18,.8) node {$\scriptscriptstyle{1}$};
    \draw (20,.8) node {$\scriptscriptstyle{2}$};
    \draw (22,.8) node {$\scriptscriptstyle{n-2}$};
    \draw (24,.8) node {$\scriptscriptstyle{n-1}$};
    \draw (26,.8) node {$\scriptscriptstyle{n}$};
  \end{tikzpicture}

  \vspace{3mm}

  \begin{tikzpicture}[scale=.4]
    \draw (-1,0) node[anchor=east]  {$D_{n}$};
    \draw (-1,-1.3) node[anchor=east]  {$\scriptstyle{n\geq 4}$};
    \foreach \x in {2,4,6,8}
    \draw[thick,fill=white!70] (\x cm,0) circle (.3cm);
    \draw[thick,fill=black!70] (0 cm,0) circle (.3cm);
    \draw[xshift=8 cm,thick,fill=black!70] (30: 17 mm) circle (.3cm);
    \draw[xshift=8 cm,thick,fill=black!70] (-30: 17 mm) circle (.3cm);
    \draw[dotted,thick] (4.3 cm,0) -- +(1.4 cm,0);
    \foreach \y in {0.3, 2.3,6.3}
    \draw[thick] (\y cm,0) -- +(1.4 cm,0);
    \draw[xshift=8 cm,thick] (30: 3 mm) -- (30: 14 mm);
    \draw[xshift=8 cm,thick] (-30: 3 mm) -- (-30: 14 mm);
    \draw (0,.8) node {$\scriptscriptstyle{1}$};
    \draw (2,.8) node {$\scriptscriptstyle{2}$};
    \draw (4,.8) node {$\scriptscriptstyle{3}$};
    \draw (6,.8) node {$\scriptscriptstyle{n-3}$};
    \draw (8,.8) node {$\scriptscriptstyle{n-2}$};
    \draw (9.45,1.6) node {$\scriptscriptstyle{n-1}$};
    \draw (9.45,-.2) node {$\scriptscriptstyle{n}$};
    
    \draw (15,0) node[anchor=east]  {$\widetilde{D}_{n}$};
    \draw (15,-1.3) node[anchor=east]  {$\scriptstyle{n\geq 4}$};
    \foreach \x in {18,20,22,24}
    \draw[thick,fill=white!70] (\x cm,0) circle (.3cm);
    \draw[xshift=24 cm,thick,fill=black!70] (30: 17 mm) circle (.3cm);
    \draw[xshift=24 cm,thick,fill=black!70] (-30: 17 mm) circle (.3cm);
    \draw[xshift=15 cm,thick,fill=black!70] (30: 17 mm) circle (.3cm);
    \draw[xshift=15 cm,thick,fill=black!70] (-30: 17 mm) circle (.3cm);
    \draw[dotted,thick] (20.3 cm,0) -- +(1.4 cm,0);
    \foreach \y in {18.3, 22.3}
    \draw[thick] (\y cm,0) -- +(1.4 cm,0);
    \draw[xshift=24 cm,thick] (30: 3 mm) -- (30: 14 mm);
    \draw[xshift=24 cm,thick] (-30: 3 mm) -- (-30: 14 mm);
    \draw[xshift=17.5 cm,thick] (30: 3 mm) -- (136: 10.5 mm);
    \draw[xshift=17.5 cm,thick] (-30: 3 mm) -- (-136: 10.5 mm);
    \draw (16.5,1.5) node {$\scriptscriptstyle{1}$};
    \draw (16.5,-.2) node {$\scriptscriptstyle{0}$};
    \draw (18,.8) node {$\scriptscriptstyle{2}$};
    \draw (20,.8) node {$\scriptscriptstyle{3}$};
    \draw (22,.8) node {$\scriptscriptstyle{n-3}$};
    \draw (24,.8) node {$\scriptscriptstyle{n-2}$};
    \draw (25.45,1.6) node {$\scriptscriptstyle{n-1}$};
    \draw (25.45,-.2) node {$\scriptscriptstyle{n}$};
  \end{tikzpicture}

    \vspace{1mm}

  \begin{tikzpicture}[scale=.4]
    \draw (-2.4,0) node[anchor=east]  {$E_{6}$};
    \foreach \x in {1,3,5}
    \draw[thick,fill=white!70] (\x cm,0) circle (.3cm);
    \draw[thick,fill=white!70] (3 cm, 2 cm) circle (.3cm);
    \foreach \x in {-1,7}
    \draw[thick,fill=black!70] (\x cm,0) circle (.3cm);
    \foreach \y in {-0.7, 1.3, 3.3, 5.3}
    \draw[thick] (\y cm,0) -- +(1.4 cm,0);
    \draw[thick] (3 cm,.3 cm) -- +(0,1.4 cm);
    \draw (-1,.8) node {$\scriptscriptstyle{1}$};
    \draw (1,.8) node {$\scriptscriptstyle{3}$};
    \draw (3.5,.8) node {$\scriptscriptstyle{4}$};
    \draw (5,.8) node {$\scriptscriptstyle{5}$};
    \draw (7,.8) node {$\scriptscriptstyle{6}$};
    \draw (3.5,2.8) node {$\scriptscriptstyle{2}$};
    
    \draw (13.6,0) node[anchor=east] {$\widetilde{E}_{6}$};
    \foreach \x in {18,20,22}
    \draw[thick,fill=white!70] (\x cm,0) circle (.3cm);
    \draw[thick,fill=white!70] (20 cm, 2 cm) circle (.3cm);
    \draw[thick,fill=black!70] (20 cm, 4 cm) circle (.3cm);
    \draw[thick,fill=black!70] (16 cm,0) circle (.3cm);
    \draw[thick,fill=black!70] (24 cm,0) circle (.3cm);
    \foreach \y in {16.3, 18.3, 20.3, 22.3}
    \draw[thick] (\y cm,0) -- +(1.4 cm,0);
    \draw[thick] (20 cm,.3 cm) -- +(0,1.4 cm);
    \draw[thick] (20 cm,2.3 cm) -- +(0,1.4 cm);
    \draw (16,.8) node {$\scriptscriptstyle{1}$};
    \draw (18,.8) node {$\scriptscriptstyle{3}$};
    \draw (20.5,.8) node {$\scriptscriptstyle{4}$};
    \draw (22,.8) node {$\scriptscriptstyle{5}$};
    \draw (24,.8) node {$\scriptscriptstyle{6}$};
    \draw (20.5,2.8) node {$\scriptscriptstyle{2}$};
    \draw (20.5,4.8) node {$\scriptscriptstyle{0}$};
  \end{tikzpicture}

\vspace{3mm}

  \begin{tikzpicture}[scale=.4]
    \draw (-1,0) node[anchor=east]  {$E_{7}$};
    \foreach \x in {0,2,4,6,8}
    \draw[thick,fill=white!70] (\x cm,0) circle (.3cm);
    \draw[thick,fill=white!70] (4 cm, 2 cm) circle (.3cm);
    \draw[thick,fill=black!70] (10 cm,0) circle (.3cm);
    \foreach \y in {0.3, 2.3, 4.3, 6.3,8.3}
    \draw[thick] (\y cm,0) -- +(1.4 cm,0);
    \draw[thick] (4 cm,.3 cm) -- +(0,1.4 cm);
    \draw (0,.8) node {$\scriptscriptstyle{1}$};
    \draw (2,.8) node {$\scriptscriptstyle{3}$};
    \draw (4.5,.8) node {$\scriptscriptstyle{4}$};
    \draw (6,.8) node {$\scriptscriptstyle{5}$};
    \draw (8,.8) node {$\scriptscriptstyle{6}$};
    \draw (10,.8) node {$\scriptscriptstyle{7}$};
    \draw (4.5,2.8) node {$\scriptscriptstyle{2}$};
    
    \draw (14,0) node[anchor=east]  {$\widetilde{E}_{7}$};
    \foreach \x in {17,19,21,23,25}
    \draw[thick,fill=white!70] (\x cm,0) circle (.3cm);
    \draw[thick,fill=white!70] (21 cm, 2 cm) circle (.3cm);
    \draw[thick,fill=black!70] (15 cm,0) circle (.3cm);
    \draw[thick,fill=black!70] (27 cm,0) circle (.3cm);
    \foreach \y in {15.3, 17.3, 19.3, 21.3,23.3,25.3}
    \draw[thick] (\y cm,0) -- +(1.4 cm,0);
    \draw[thick] (21 cm,.3 cm) -- +(0,1.4 cm);
    \draw (15,.8) node {$\scriptscriptstyle{0}$};
    \draw (17,.8) node {$\scriptscriptstyle{1}$};
    \draw (19,.8) node {$\scriptscriptstyle{3}$};
    \draw (21.5,.8) node {$\scriptscriptstyle{4}$};
    \draw (23,.8) node {$\scriptscriptstyle{5}$};
    \draw (25,.8) node {$\scriptscriptstyle{6}$};
    \draw (27,.8) node {$\scriptscriptstyle{7}$};
    \draw (21.5,2.8) node {$\scriptscriptstyle{2}$};
  \end{tikzpicture}
  
 \vspace{3mm} 
    \caption{(see \cite{BourbakiNicolas2008Lgal}) Finite type Dynkin diagrams with cominuscule simple roots marked in black (left column), and the corresponding extended Dynkin diagrams with all the cominuscule roots and the additional affine root marked in black (right column).}
    \label{TBL:comin}
\end{table}

\subsection{Bruhat Order and Reduced Expressions}
The Weyl group $W$ is a \emph{Coxeter group} with \emph{simple reflections} $\left\{s_\alpha\mid\alpha\in\dynk\right\}$.
The \emph{Bruhat order} $\leq$ on $W$ is the partial order generated by the relations \begin{align}
\label{lem:minRule}
\hspace{60pt}   w s_\alpha>w&\iff w(\alpha)>0       &\forall\,\alpha\in\roots+..\\
\hspace{60pt}   s_\alpha w>w&\iff w^{-1}(\alpha)>0  &\forall\,\alpha\in\roots+..\nonumber
\end{align}
We say $w=s_1\ldots s_l$ is a \emph{reduced expression} for $w$ if each $s_i$ is a simple reflection, and any other expression $w=s'_1\ldots s'_k$ satisfies $k\geq l$. 
The \emph{length} $l(w)$ of an element $w\in W$ is the number of simple reflections in a reduced expression for $w$.
The length function satisfies the relation $v<w\implies l(v)<l(w)$.

\subsection{The Weyl Involution}
\label{weylInv}
The Weyl group $W_0$ is finite, and has a unique longest element $w_0$.
The element $w_0$ is an involution, i.e., $w_0^2=1$, and further satisfies $w_0(\roots+.0.)=\roots-.0.$.
It follows that $-w_0$ induces an involution of $\dynk_0$, called the \emph{Weyl involution} (see \cite[pg 158]{BourbakiNicolas2008Lgal}).

\subsection{Semi-Direct Product Decomposition}
\label{sdp}
Let $\coroots_0$ be the coroot lattice of \roots.0..
There exists (cf. \cite[\S 13.1.7]{kumar2012kac}) a group isomorphism $W\rightarrow W_0\ltimes\Lambda_0^\vee$ given by \begin{align*}
\hspace{70pt}   s_\alpha    &\mapsto (s_\alpha,0) &\text{for }\alpha\in\dynk_0\\
\hspace{70pt}   s_{\alpha_0}&\mapsto (s_\theta,-\theta^{\vee})&
\end{align*} 
where $\theta$ is the highest root in \roots.0..
For $q\in\Lambda_0^\vee$, we write $\tau_q\define(1,q)\in W_0\ltimes\Lambda^\vee_0$.
The action of $\tau_q$ on \roots.. is determined by the formula $\tau_q(\delta)=\delta$, and\begin{align}
\hspace{70pt}   \tau_q(\alpha)  &=\alpha-\alpha(q)\delta&\forall\,\alpha\in\roots.0.\label{form:tauq}
\end{align} 

\subsection{Support}
\label{support}
The \emph{support} of $w\in W$, denoted $Supp(w)$, is the smallest subset $\J\subset\dynk$ satisfying $w\in W_\J$. 
For $\alpha=\sum\limits_{\beta\in\dynk}a_\beta\beta$, we define the \emph{support} of $\alpha$ to be \begin{align*}
    Supp(\alpha)\define\left\{\beta\in\dynk\mid a_\beta\neq 0\right\}
\end{align*}
If $\alpha\in\roots+..$ and $w(\alpha)\in\roots-..$, then $Supp(\alpha)\subset Supp(w)$.
In particular, it follows from \Cref{eq:minRep} that 
$    W_\J\subset W^{\dynk\backslash\J}$ for any $\J\subset\dynk$.

\subsection{Minimal Representatives}
\label{minimalReps}
Let \J\ be some proper (necessarily finite type) sub-diagram of \dynk. 
We write \roots.\J., \roots+.\J., \roots-.\J., \J, and $W_\J$ for the set of roots, positive roots, negative roots, simple roots, and the Weyl subgroup respectively whose support is contained in \J.
Given an element $w\in W$, there exists a unique element $w^\J$, which is of minimal length in the coset $wW_\J$.
The element $w^\J$ is called the \emph{minimal representative} of $w$ with respect to \J.
The set of minimal representatives in $W$ with respect to \J\ is denoted $W^\J$.
It follows from \Cref{lem:minRule} that\begin{align}\label{eq:minRep}
    W^\J=\left\{w\in W\mid w(\alpha)>0,\,\forall\,\alpha\in\J\right\}
\end{align}

\subsection{The Group $G$}
Let \gl\ be the simply connected, almost simple algebraic group over $\mathbf k$ whose Dynkin diagram is $\dynk_0$.
We fix a \emph{torus} $T\subset G$, and a \emph{Borel subgroup} $B\subset G$ satisfying $T\subset B$.
We identify the root system of $(G,\borel,T)$ with the abstract root system \roots.0., and the Weyl group $W_0$ with $N/T$, where $N$ is the normalizer of $T$ in $G$.

\subsection{The Loop Group}
\label{def:kmg}
Let $\mathcal O\define\mathbf k[t]$, $\mathcal O^-\define\mathbf k[t^{-1}]$, and $\mathcal K\define\mathbf k[t,t^{-1}]$.
The \emph{loop group} $\gL\define G(\mathcal K)$ is a Kac-Moody group with Dynkin diagram \dynk, and is ind-representable by an affine scheme over $\mathbf k$.
Throughout, we shall identify the Weyl group $W$ of $\dynk$ with $N(\mathcal K)/T$.
\par 
\par 
Let \lgl\ be the Lie algebra of $G$.
We identify the Lie algebra $L\lgl$ of \gL\ with $\lgl\tensor\mathcal K$.
Let $U_\alpha$ denote the \emph{root subgroup} corresponding to a real root $\alpha\in\roots.\mathrm{re}.$ (see \cite{remy2002groupes,borel2012linear}).
We can identify \gl\ as the subgroup of \gL\ generated by $T$ and $\left\{U_\alpha\mid\alpha\in\roots.0.\right\}$. 
\par 
\par 
Let $\gO\define G(\mathcal O)$, $L^-\gl\define G(\mathcal O^-)$, and consider the \emph{evaluation maps}\begin{align*}
    \pi:\gO\rightarrow\gl,\quad t\mapsto 0   &&\pi_-: L^-\gl\rightarrow\gl,\quad t^{-1}\mapsto 0
\end{align*}
The subgroups $\Borel\define\pi^{-1}(\borel)$ and $\Borel^-\define\pi_-^{-1}(\borel^-)$ are called Borel subgroups of \gL. 
Suppose $\borel$, $\borel^-$ are \emph{opposite} in \gl, i.e., $\borel\cap\borel^-=T$. 
Then $\Borel$, $\Borel^-$ are opposite in \gL, i.e., $\Borel\cap\Borel^-=T$. 

\subsection{Nilpotent set of roots}
\label{nilpotentRoots}
(see \cite{tits1987uniqueness})
Let $\Psi$ be a finite set of real roots.
We say that $\Psi$ is \emph{prenilpotent} if there exist $w,w'\in W$ such that $w(\Psi)\subset\roots+..$ and $w'(\Psi)\subset\roots-..$. 
We say that $\Psi$ is \emph{closed} if\[
    \alpha,\beta\in\Psi,\ \alpha+\beta\in\roots..\implies \alpha+\beta\in\Psi.
\]
Finally, we say $\Psi$ is \emph{nilpotent} if it is pre-nilpotent and closed.
For $\alpha\in\Psi$, let $\lgl^\alpha$ be the associated root space (see for example \cite{kac1994infinite}).
If $\Psi$ is nilpotent, then so is the Lie sub-algebra\begin{align}\label{defgpsi}
    \lgl^\Psi\define\bigoplus\limits_{\alpha\in\Psi}{\lgl^\alpha} 
\end{align}

\subsection{Tits' Functor for Kac-Moody Groups}
\label{titsFunctor}
\vskip -2pt
For $\alpha$ a real root, let $U_\alpha$ be the group scheme over $\mathbf k$ isomorphic to $\mathbb G_a$ with Lie algebra $\lgl^\alpha$.
To every nilpotent set of roots $\Psi$, Tits\cite{tits1987uniqueness} associates a group scheme $U_\Psi$ that depends only on $\lgl^\Psi$, and is naturally a closed subgroup scheme of $\gL$ (cf. \cite{tits1987uniqueness}).
For any ordering of $\Psi$, the product morphism $$\prod\limits_{\alpha\in\Psi}U_\alpha\rightarrow U_\Psi$$ is a scheme isomorphism.
Hence, we get a scheme isomorphism $\eta:\lgl^\Psi\rightarrow U_\Psi$.

\subsection{Parabolic Subgroups}
For \J\ any subset of $\dynk$, the subgroup\begin{align*}
    \Para_\J\define\Borel W_\J\Borel=\left\{bnb'\mid b,b'\in\Borel,\,n\in N\left(\mathcal K\right),\,n\mod T\in W_\J\right\}
\end{align*}
is the \emph{parabolic subgroup} of \gL\ corresponding to $\J\subset\dynk$.
The parabolic subgroup $\Para_{\dynk_0}$ is precisely $\gO$.
For $\J\subset\dynk_0$, the subgroup\begin{align*}
    \para_\J\define\borel W_\J\borel=\left\{bnb'\mid b,b'\in\borel,\,n\in N,\,n\mod T\in W_\J\right\}
\end{align*}
is the parabolic subgroup of \gl\ corresponding to $\J\subset\dynk_0$.
\ifsure
\begin{framed}
Let $U_P$ be the \emph{unipotent radical} of $P_\J$, and let $U_P^-$ be the subgroup generated by $\left\{U_\alpha\mid\alpha\in\roots-.0.\backslash\roots-.\J.\right\}$.
For any enumeration $\alpha_1,\ldots,\alpha_k$ (resp. $\beta_1,\ldots,\beta_k$) of $\roots+.0.\backslash\roots+.\J.$, (resp. $\roots-.0.\backslash\roots-.\J.$), the maps \begin{align*}
    U_{\alpha_1}\times\ldots\times U_{\alpha_k}\rightarrow U_P:(u_1,\ldots,u_k)\mapsto u_1\ldots u_k\\
    U_{\beta_1}\times\ldots\times U_{\beta_k}\rightarrow U^-_P:(u_1,\ldots,u_k)\mapsto u_1\ldots u_k
\end{align*}
are scheme isomorphisms (cf. \cite{borel2012linear}).
\end{framed}
\fi

{\def\para{\ensuremath{P_{\J}}}
\subsection{The Bruhat decomposition}
Consider some subset \J\ of $\dynk_0$.
The \emph{Bruhat decomposition} of $G$ is 
$$G=\bigsqcup\limits_{w\in W_0^\J}\borel w\para$$ where $W_0^\J\define W_0\cap W^\J$.
The \emph{partial flag variety} $G/P_\J$ has the decomposition
$$G/P_\J=\bigsqcup\limits_{w\in W_0^\J}\borel w\para\mod\para$$
Let $\leq$ denote the \emph{Bruhat order} on $W$.
For $w\in W_0^\J$, the \emph{Schubert variety} \[
    X_\J(w)\define\overline{\borel w\para}\mod\para=\bigsqcup\limits_{\substack{v\in W_0^\J\\ v\leq w}}\borel v\para\mod\para
\]
is a projective variety of dimension $l(w)$.
}

{\def\Para{\ensuremath{\mathcal P_\J}}
\subsection{Affine Schubert Varieties}
Fix a proper subset $\J\subsetneq\dynk$.
We have the \emph{Bruhat decomposition} \begin{align*}
    \gL         &=\bigsqcup\limits_{w\in W^\J}\Borel w\Para
\end{align*}
The quotient $\gL/\Para$ is an ind-scheme.
For $w\in W^\J$, the \emph{affine Schubert variety} $$X_\J(w)\define\overline{\Borel w\Para}\mod\Para$$ is a projective variety of dimension $l(w)$, and has the decomposition
$$X_\J(w)=\bigsqcup\limits_{\substack{v\leq w\\ v\in W^\J}}\Borel v\Para\mod\Para$$
Consider a proper subset $\mathcal L\subsetneq\dynk$, and let $w_{\mathcal L}$ be the longest element in $W_{\mathcal L}$.
The Schubert variety $X_\J(w_{\mathcal L}^\J)$ is $\mathcal P_{\mathcal L}$-homogeneous, hence smooth. 
Indeed, we have $\mathcal P_{\mathcal L}\cap\Para=\mathcal P_{\mathcal L\cap\J}$, and further, $X_\J(w_{\mathcal L}^\J)=\mathcal P_{\mathcal L}/\mathcal P_{\mathcal L\cap\J}$.

\subsection{The Opposite Cell}
\label{oppositeCell}
Let $\J,\mathcal L$ be as above.
Let $\boldsymbol e$ denote the image of the identity element in the quotient $\gL/\Para$.
The \emph{opposite cell} $X^-_\J(w)$, given by 
$$X^-_\J(w)\define\Borel^-\boldsymbol e\bigcap X_\J(w)$$ is an \emph{open affine} subvariety of $X_\J(w)$.
The opposite cell of the Schubert variety $X_\J(w_{\mathcal L}^\J)$ 
is isomorphic to the \emph{affine space} $\mathbb A^k$ for $k=Card(\roots-.\mathcal L.\backslash\roots.\J.)$.
Indeed, for any enumeration $\alpha_1,\ldots,\alpha_k$ of $\roots-.\mathcal L.\backslash\roots.\J.$, the map \begin{align}
\label{oppositeCell}
    U_{\alpha_1}\times\ldots\times U_{\alpha_k}&\longrightarrow \gL/\Para\\
    (u_1,\ldots,u_k)&\mapsto u_1\ldots u_k\,\mod\Para\nonumber
\end{align}
is an open immersion onto the opposite cell $X^-_\J(w_{\mathcal L}^\J)$.
We also mention here the \emph{$L^-\gl$ cell} $Y_\J(w)\subset X_\J(w)$,\begin{align}\label{bigLGcell}
    Y_\J(w)\define L^-\gl\boldsymbol e\bigcap X_\J(w)
\end{align}
which is also an \emph{open affine} subvariety in $X_\J(w)$. 
}

\subsection{The Demazure Product}
\label{lem:bruProd}
The \emph{Demazure product} $\star$ on $W$ is the unique associative product satisfying:
\begin{align}\label{formula:bruProd}
    s_\alpha\star w &=\begin{cases} 
                        w           &\text{if }s_\alpha w<w\\ 
                        s_\alpha w  &\text{if }s_\alpha w>w
                    \end{cases}
\end{align}
The double coset $\Borel w\Borel$ is called a \emph{Bruhat cell} in \gL.
Suppose $v=s_1\ldots s_k$ is a reduced presentation for $v\in W$. 
Then $v=s_1\star\ldots\star s_k$, and
\begin{align*}
    \Borel v\Borel  &=\Borel s_1\Borel\ldots\Borel s_k\Borel
\end{align*}
Consider $v\in W^\J$, $w\in W_\J$.
Then $v\star w=vw$.
More generally, we have\begin{align}\label{lengthVee}
    l(vw)=l(v)+l(w)\iff v\star w=vw
\end{align}

\begin{remark}
The reader may refer to \cite{knutson2004subword}, Remark 3.3 for further details, including a justification of the name ``Demazure product".
\end{remark}

\section{The Cominuscule Grassmannian}
Let $\dynk_0$ be a finite type Dynkin diagram, and \dynk\ its associated extended Dynkin diagram.
In this section, we first recall the notion of cominuscule roots and cominuscule Grassmannians associated to $\dynk_0$.
We then fix a choice of cominuscule root $\alpha_d$, and develop the rest of this section (and the next) for that fixed choice of $\alpha_d$.
We introduce in \Cref{defn:iota} a canonical Dynkin diagram involution $\iota$ depending only on $\alpha_d$.
\par 
\par 
Let $G$ be the almost simple, simply connected algebraic group with Dynkin diagram $\dynk_0$, and $P$ the maximal parabolic subgroup corresponding to `omitting' the simple root $\alpha_d$.
In characteristic $0$, Lakshmibai, Ravikumar, and Slofstra \cite{lakshmibai2015cotangent} have constructed an isomorphism $\phi$ of $T^*G/P$ with the opposite cell of an affine Schubert variety in $LG/\Para$ (see \Cref{defnPara} for the definition of \Para). 
We give an alternate description of this Schubert variety in \Cref{defn:q}.
In \Cref{fibreId}, we give a characteristic free definition of $\phi$.

\begin{defn}
A simple root $\alpha_d\in\dynk_0$ is called \emph{cominuscule} if the coefficient of $\alpha_d$ in $\delta$ is $1$. 
\end{defn}

Observe from \Cref{TBL:comin} that a simple root $\alpha_d$ is cominuscule if and only if there exists an automorphism $\iota$ of \dynk\ such that $\iota(\alpha_0)=\alpha_d$.
For the remainder of this section (and the next), let $\alpha_d$ be some \emph{fixed cominuscule root} in $\dynk_0$, and set\begin{align*}
    \dynk_d\define\dynk\backslash\left\{\alpha_d\right\}&&\J\define\dynk_0\cap\dynk_d&&\theta_d\define\delta-\alpha_d
\end{align*}
We write \roots.d., \roots+.d., \roots-.d., $\mathbb Z\dynk_d$, and $W_d$ for the set of roots, positive roots, negative roots, root lattice, and Weyl group respectively of the root system associated to $\dynk_d$.
Observe that $\theta_d$ is the highest root of the finite type root system \roots.d..

\subsection{The Cominuscule Grassmannian}
\label{defnPara}
Let \gd\ be the parabolic subgroup corresponding to the set of simple roots $\dynk_d$.
We will write \para, \Para\ for the parabolic subgroups $\para_\J\subset\gl$ and $\Para_\J\subset\gL$ respectively.
Observe that $\Para=\gO\cap\gd$ and $\para=\gl\cap\gd$.
The variety $\gl/\para$ is called a \emph{cominuscule Grassmannian} of type $\dynk_0$. 

\subsection{The Cotangent Space}
Let $\lgl$, $\mathfrak p$, $\mathfrak h$ denote the Lie algebras of \gl, \para, $T$ respectively. 
We have the \emph{root space decompositions}\begin{align*}
    \lgl            &=\mathfrak h\oplus\bigoplus\limits_{\alpha\in\roots.0.}\lgl^\alpha\\
    \mathfrak p     &=\mathfrak h\oplus\bigoplus\limits_{\alpha\in\roots+.0.\sqcup\roots-.J.}\lgl^\alpha
\end{align*}
Let us identify \lgl\ with its dual using the \emph{Killing form} (cf. \cite{MR1808366}).
In particular, the dual of a root space $\lgl^\alpha$ is identified with the root space $\lgl^{-\alpha}$.
Now the tangent space at identity of $G/P$ being $\lgl/\mathfrak p$, we can identify its dual with 
$$\bigoplus\limits_{\alpha\in\roots-.0.\backslash\roots-.\J.}\lgl^{-\alpha}=\bigoplus\limits_{\alpha\in\roots+.0.\backslash\roots+.\J.}\lgl^{\alpha}=\u_P$$
where $\u_P$ is the Lie algebra of the unipotent radical $U_P$ of \para.

\subsection{The Cotangent Bundle}
The cotangent bundle $T^*\gl/\para$ is a vector bundle over $\gl/\para$, the fibre at any point $x\in\gl$ being the cotangent space to $\gl/\para$ at $x$; the dimension of $T^*\gl/\para$ equals $2\dim\gl/\para$.
Also, $T^*\gl/\para$ is the fibre bundle over $\gl/\para$ associated to the principal \para-bundle $\gl\rightarrow\gl/\para$ for the adjoint action of \para\ on $\u_\para$.
Thus \begin{align*}
    T^*\gl/\para=\gl\times^\para\u_\para&=\gl\times\u_\para/\sim
\end{align*}
where $\sim$ is the equivalence relation given by $(gp,u)\sim(g,pup^{-1})$ for $g\in\gl,\,u\in\u_\para,\ p\in\para$.  

\begin{lemma}\label{wsontheta}
Let $\ws$ be the longest element of $W_\J$.
We have $\ws(\alpha_d)=\theta_0$ and $\ws(\alpha_0)=\theta_d$.
\end{lemma}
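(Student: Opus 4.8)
The plan is to prove the identity $\ws(\alpha_d)=\theta_0$ by a direct computation inside the finite root system $\roots.0.$, and then to deduce $\ws(\alpha_0)=\theta_d$ essentially for free. For the second reduction, observe that $\ws$ lies in $W_\J\subseteq W_0\subseteq W$ and that every simple reflection of $W$ fixes the null root $\delta$ (since $\langle\delta,\alpha^\vee\rangle=0$ for all $\alpha\in\dynk$), so $\ws(\delta)=\delta$. As $\alpha_0=\delta-\theta_0$ and $\theta_d=\delta-\alpha_d$ by definition, and $\ws$ is an involution, the identity $\ws(\alpha_d)=\theta_0$ gives $\ws(\theta_0)=\alpha_d$, whence $\ws(\alpha_0)=\ws(\delta)-\ws(\theta_0)=\delta-\alpha_d=\theta_d$. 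So everything reduces to the first identity.

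To prove $\ws(\alpha_d)=\theta_0$, I would work with the set $S:=\roots+.0.\backslash\roots+.\J.$, so that $\u_\para=\bigoplus_{\alpha\in S}\lgl^\alpha$. Because $\alpha_d$ is cominuscule, the coefficient of $\alpha_d$ in $\delta$, hence in the highest root $\theta_0=\delta-\alpha_0$, equals $1$; since every positive root of $\dynk_0$ is $\le\theta_0$, this forces the $\alpha_d$-coefficient of any root in $\roots+.0.$ to be $0$ or $1$, so $S$ is exactly the set of positive roots of $\dynk_0$ with $\alpha_d$-coefficient $1$. Now each simple reflection $s_\beta$ with $\beta\in\J$ preserves the $\alpha_d$-coefficient of a root (as $\beta\ne\alpha_d$) and permutes $\roots.0.$; since a root of $\roots.0.$ with $\alpha_d$-coefficient $1$ is automatically positive, $s_\beta(S)=S$, and therefore $\ws(S)=S$.

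The core of the argument is then a height comparison. For $\alpha\in S$ write $\alpha=\alpha_d+\mu_\alpha$ with $\mu_\alpha$ a nonnegative integral combination of simple roots in $\J$. Since $\ws$ is the longest element of $W_\J$ we have $\ws(\roots+.\J.)=\roots-.\J.$, so $\ws$ carries any nonnegative combination of $\J$-simple roots to a nonpositive one; hence $\ws(\alpha)-\ws(\alpha_d)=\ws(\mu_\alpha)$ is a nonpositive combination of $\J$-simple roots, i.e. $\ws(\alpha)\le\ws(\alpha_d)$ in the root order for every $\alpha\in S$. Applying this to $\alpha=\ws(\theta_0)$, which lies in $S$ because $\theta_0\in S$ and $\ws(S)=S$, and using $\ws^2=1$, gives $\theta_0\le\ws(\alpha_d)$; since also $\ws(\alpha_d)\in S\subseteq\roots+.0.$ and $\theta_0$ is the highest root of $\roots.0.$, we get $\ws(\alpha_d)\le\theta_0$, so $\ws(\alpha_d)=\theta_0$.

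The only real subtlety is in the second and third paragraphs: one must use the cominuscule hypothesis to identify $S$ with the positive roots of $\alpha_d$-coefficient exactly $1$, which is precisely what makes $S$ stable under $W_\J$ and what forces the $\J$-part of an element of $S$ to decrease under $\ws$. A more conceptual alternative would be to note that $\u_\para=\bigoplus_{\alpha\in S}\lgl^\alpha$ is an irreducible module for the Levi subgroup of $\para$, with highest weight $\theta_0$ and lowest weight $\alpha_d$, and to invoke the standard fact that the longest element of the Levi's Weyl group $W_\J$ interchanges the highest and lowest weight spaces; the elementary argument above is preferable since it avoids appealing to irreducibility.
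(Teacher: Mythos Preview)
Your proof is correct and, for the main identity $\ws(\alpha_d)=\theta_0$, follows essentially the same line as the paper: both arguments use the cominuscule hypothesis to write each $\alpha\in\roots+.0.\backslash\roots+.\J.$ as $\alpha_d$ plus a nonnegative $\J$-combination, apply $\ws$ to see $\ws(\alpha)\le\ws(\alpha_d)$, and conclude that $\ws(\alpha_d)$ is the maximum $\theta_0$ of this set. The one genuine difference is in the second identity: the paper repeats the argument symmetrically inside $\roots.d.$ to show $\ws(\alpha_0)$ is maximal in $\roots+.d.\backslash\roots+.\J.$, whereas you deduce it directly from the first identity via $\ws(\delta)=\delta$ and the relations $\alpha_0=\delta-\theta_0$, $\theta_d=\delta-\alpha_d$---a cleaner shortcut that avoids rerunning the maximality argument.
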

\begin{proof}
To show $\ws(\alpha_d)=\theta_0$, it is enough to show that $\ws(\alpha_d)$ is maximal in $\roots+.0.\backslash\roots+.\J.$.
Observe first that $\ws(\roots.0.)=\roots.0.$, $\ws(\roots.\J.)=\roots.\J.$, and 
$$\left\{\alpha\in\roots+.0.\mid \ws (\alpha)<0\right\}=\roots+.\J.$$
Consequently, $\ws(\roots+.0.\backslash\roots+.\J.)\subset\roots+.0.\backslash\roots+.\J.$.
Consider $\alpha\in\roots+.0.\backslash\roots+.\J.$, and let $\gamma=\alpha-\alpha_d$.
Observe that $\alpha_d\leq\alpha$; hence $\gamma\geq 0$.
Further, since $\alpha_d$ is cominuscule, we have $2\alpha_d\not\leq\alpha$.
It follows that $\alpha_d\not\in Supp(\gamma)$, and so $Supp(\gamma)\subset \J$. 
Hence\begin{align*}
    \ws(\gamma)\leq 0\implies\ws(\alpha)&=\ws(\alpha_d)+\ws(\gamma)\\
                     \implies\ws(\alpha)&\leq\ws(\alpha_d)
\end{align*}
We see that $\ws (\alpha_d)$ is maximal in $\roots+.0.\backslash\roots+.\J.$, hence $\ws(\alpha_d)=\theta_0$.
The formula $\ws(\alpha_0)=\theta_d$ follows similarly, by showing that $\ws(\alpha_0)$ is maximal in $\roots+.d.\backslash\roots+.\J.$.
\end{proof}

\subsection{Bilinear Form}
Let $V$ denote the real vector space 
with basis \dynk. 
There exists a $W$-invariant symmetric bilinear form $\left(\ \mid\ \right)$ on $V$ (cf. \cite[\S3.7]{kac1994infinite}) such that\begin{align}\label{biFormRef}
    s_\alpha(\beta)=\beta-2\frac{\left(\alpha\mid\beta\right)}{\left(\alpha\mid\alpha\right)}\alpha.
\end{align}

\begin{defn}
[The Involution $\iota$]
\label{defn:iota}
Let $\iota$ be the linear involution of $V$ given by\begin{align*}
    \iota(\alpha)   &=\begin{cases}\alpha_d&\text{for }\alpha=\alpha_0\\ 
                        \alpha_0    &\text{for }\alpha=\alpha_d\\ 
                        -\ws(\alpha)&\text{for }\alpha\in\J
                    \end{cases}
\end{align*}
\end{defn}

\begin{lemma}
\label{ruleInv}
The form $\left(\ \mid\ \right)$ is invariant under $\iota\in GL(V)$.
\end{lemma}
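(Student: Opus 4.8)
The plan is to verify $\iota$-invariance of the form by checking it on pairs of simple roots, exploiting the fact that $V$ has basis $\dynk = \{\alpha_0, \alpha_d\} \sqcup \J$, so the bilinear form is determined by the values $(\alpha \mid \beta)$ for $\alpha, \beta$ simple. Since $\iota$ permutes $\{\alpha_0, \alpha_d\}$ and sends $\J$ into $-\ws(\J)$, I would split the verification into the cases (i) both arguments in $\J$, (ii) one argument in $\J$ and the other in $\{\alpha_0, \alpha_d\}$, (iii) both arguments in $\{\alpha_0, \alpha_d\}$.

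For case (i), if $\alpha, \beta \in \J$ then $\iota(\alpha) = -\ws(\alpha)$ and $\iota(\beta) = -\ws(\beta)$, so $(\iota(\alpha) \mid \iota(\beta)) = (\ws(\alpha) \mid \ws(\beta)) = (\alpha \mid \beta)$ by $W$-invariance of the form (and since $\ws \in W$). For case (iii), $(\iota(\alpha_0) \mid \iota(\alpha_d)) = (\alpha_d \mid \alpha_0)$ trivially, and $(\iota(\alpha_0)\mid\iota(\alpha_0)) = (\alpha_d\mid\alpha_d)$, $(\iota(\alpha_d)\mid\iota(\alpha_d)) = (\alpha_0\mid\alpha_0)$; these last two hold because $\alpha_0$ and $\alpha_d$ are both long roots of the same length — indeed $\alpha_d$ is cominuscule, and by \Cref{wsontheta} we have $\ws(\alpha_d) = \theta_0$ with $\ws \in W$, so $(\alpha_d\mid\alpha_d) = (\theta_0\mid\theta_0) = (\delta\mid\delta) + \dots$; more directly, $\alpha_0$ has the same length as $\theta_0 = \ws(\alpha_d)$, and $\theta_0$ has the same length as $\alpha_d$ only when $\alpha_d$ is long, which holds precisely because $\alpha_d$ is cominuscule (its coefficient in $\delta = \alpha_0 + \theta_0$ is $1$, forcing $\alpha_d$ to be a long simple root). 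So case (iii) reduces to the observation that cominuscule roots are long and $\alpha_0$ is always long.

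The main obstacle is case (ii): for $\beta \in \J$ we must show $(\alpha_0 \mid \beta) = (\alpha_d \mid -\ws(\beta))$ and $(\alpha_d \mid \beta) = (\alpha_0 \mid -\ws(\beta))$. Here I would use \Cref{wsontheta}: since $\ws(\alpha_d) = \theta_0$ and $\ws$ is an involution on $\roots.\J.$-related roots with $\ws(\J) = -\,(\text{positive roots})$, I compute $(\alpha_d \mid \ws(\beta)) = (\ws(\alpha_d) \mid \ws^2(\beta)) $ — but $\ws^2 = 1$, so this is $(\ws(\alpha_d) \mid \beta) = (\theta_0 \mid \beta)$. Thus $(\alpha_d \mid -\ws(\beta)) = -(\theta_0 \mid \beta)$, and I must check this equals $(\alpha_0 \mid \beta)$. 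Now $\alpha_0 = \delta - \theta_0$ and $(\delta \mid \beta) = 0$ for any root $\beta$ (as $\delta$ is the null root, radical of the form on the affine side), so $(\alpha_0 \mid \beta) = -(\theta_0 \mid \beta)$, as required. The second identity in case (ii) is symmetric using $\ws(\alpha_0) = \theta_d$ and $\alpha_d = \delta - \theta_d$, together with $(\delta \mid \beta) = 0$. Assembling these three cases gives $\iota$-invariance of $(\ \mid\ )$ on a basis, hence on all of $V$ by bilinearity.
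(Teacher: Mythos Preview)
Your proof is correct and follows essentially the same approach as the paper: split into the three cases according to whether the simple roots lie in $\J$ or in $\{\alpha_0,\alpha_d\}$, then use $W$-invariance of the form, \Cref{wsontheta}, and $(\delta\mid\cdot)=0$ to handle each case. The computations you give for cases (i), (ii), and the off-diagonal part of (iii) match the paper's line for line; you additionally verify the diagonal identities $(\alpha_0\mid\alpha_0)=(\alpha_d\mid\alpha_d)$, which the paper leaves implicit (and which, as you note, follow cleanly from $\ws(\alpha_d)=\theta_0$ together with $(\alpha_0\mid\alpha_0)=(\delta-\theta_0\mid\delta-\theta_0)=(\theta_0\mid\theta_0)$).
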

\begin{proof}
Recall that 
$\left(\delta\mid\ \right)=0$ (cf. \cite[\S5.2]{kac1994infinite}).
Given $\alpha,\beta\in\J$, we have\begin{align*}
    \left(\iota(\alpha)\mid\iota(\beta)\right)  &=\left(-\ws(\alpha)\mid-\ws(\beta)\right)=\left(\alpha\mid\beta\right)\\
    \left(\iota(\alpha_0)\mid\iota(\beta)\right)    &=\left(\alpha_d\mid-\ws(\beta)\right) =\left(\ws(\theta_0)\mid-\ws(\beta)\right) \\
    &=\left(-\theta_0\mid\beta\right)=\left(\alpha_0-\delta\mid\beta\right)=\left(\alpha_0\mid\beta\right)\\
    \left(\iota(\alpha_d)\mid\iota(\beta)\right)    &=\left(\alpha_0\mid-\ws(\beta)\right) =\left(\ws(\theta_d)\mid-\ws(\beta)\right) \\
    &=\left(-\theta_d\mid\beta\right)=\left(\alpha_d-\delta\mid\beta\right)=\left(\alpha_d\mid\beta\right)\\
   \left(\iota(\alpha_0)\mid\iota(\alpha_d)\right)  &=\left(\alpha_d\mid\alpha_0\right)=\left(\alpha_0\mid\alpha_d\right)
\end{align*}
\vskip -2ex
\end{proof}

\begin{prop}
\label{FormInv}
The map $\iota$ induces an involution of the Dynkin diagram $\dynk$.
\end{prop}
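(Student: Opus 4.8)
The plan is to show that $\iota$ is a bijection of the simple roots $\dynk$ onto itself which preserves the Cartan integers, i.e.\ satisfies $\langle\iota(\alpha),\iota(\beta)^\vee\rangle=\langle\alpha,\beta^\vee\rangle$ for all $\alpha,\beta\in\dynk$; this is precisely what it means for $\iota$ to induce an involution of the Dynkin diagram $\dynk$. Since $\iota$ is by construction a linear involution of $V$, it suffices to check two things: that $\iota$ maps $\dynk$ bijectively to $\dynk$, and that it is compatible with the bilinear form in the appropriate normalized sense.

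First I would verify that $\iota(\dynk)=\dynk$. On $\{\alpha_0,\alpha_d\}$ this is immediate from the definition, since $\iota$ just swaps these two roots. For $\alpha\in\J$, we must check $-\ws(\alpha)\in\dynk$. Here I would use the properties of $\ws$ established in the proof of \Cref{wsontheta}: $\ws$ is the longest element of the finite Weyl group $W_\J$, so $\ws(\roots+.\J.)=\roots-.\J.$, hence $-\ws$ maps $\roots+.\J.$ to itself and, being $-w_0$ for the parabolic $W_\J$, it permutes the simple roots $\J$ (this is exactly the Weyl involution of the finite type diagram $\J$, cf. \Cref{weylInv}). Thus $-\ws(\J)=\J$, and combined with the swap on $\{\alpha_0,\alpha_d\}$ we get that $\iota$ restricts to a bijection of $\dynk=\J\sqcup\{\alpha_0,\alpha_d\}$; since $\iota^2=\mathrm{id}$ on $V$ it is an involution of this finite set.

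Next, having \Cref{ruleInv} in hand, I know $\iota$ preserves the $W$-invariant form $(\ \mid\ )$. To pass from form-invariance to Dynkin-diagram-invariance I need that $\iota$ preserves the squared lengths of simple roots, i.e.\ $(\iota(\alpha)\mid\iota(\alpha))=(\alpha\mid\alpha)$ — but this is an immediate consequence of \Cref{ruleInv} applied with $\beta=\alpha$. Then for any $\alpha,\beta\in\dynk$,
\begin{align*}
\langle\iota(\alpha),\iota(\beta)^\vee\rangle
= 2\frac{(\iota(\alpha)\mid\iota(\beta))}{(\iota(\beta)\mid\iota(\beta))}
= 2\frac{(\alpha\mid\beta)}{(\beta\mid\beta)}
= \langle\alpha,\beta^\vee\rangle,
\end{align*}
so $\iota$ preserves all Cartan integers. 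A permutation of the vertex set of $\dynk$ preserving the Cartan matrix entries is exactly a Dynkin diagram automorphism, and $\iota^2=\mathrm{id}$ makes it an involution.

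The only genuine subtlety — and the step I would treat most carefully — is the claim that $-\ws$ permutes the simple roots $\J$ rather than merely the positive roots $\roots+.\J.$. This is the standard fact that $-w_0^{(\J)}$ is a diagram automorphism of the finite type diagram on $\J$; it holds because $w_0^{(\J)}$ sends $\roots+.\J.$ to $\roots-.\J.$ and is length-reversing, so $-w_0^{(\J)}$ sends a simple root (an element of $\roots+.\J.$ not expressible as a sum of two positive roots) to another such, hence to a simple root. Everything else is bookkeeping with the already-proven \Cref{ruleInv} and \Cref{wsontheta}. One should also note in passing that $\iota$, being form-preserving, automatically sends real roots to real roots and $\delta$ to $\delta$ (consistent with $\iota$ fixing $\delta=\alpha_0+\theta_0$, since $\iota(\alpha_0)+\iota(\theta_0)=\alpha_d+(\delta-\alpha_d)=\delta$ once one checks $\iota(\theta_0)=\theta_d$, which follows from $\ws(\alpha_d)=\theta_0$ in \Cref{wsontheta}), so the induced map is genuinely an automorphism of the affine diagram $\dynk$ and not just of the finite part.
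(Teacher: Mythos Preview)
Your proposal is correct and follows essentially the same route as the paper: first observe that $-\ws$ is the Weyl involution of $\J$ and hence permutes $\J$, so $\iota(\dynk)=\dynk$; then use the form-invariance from \Cref{ruleInv} to deduce that the Cartan integers $2(\alpha\mid\beta)/(\beta\mid\beta)$ are preserved. The paper's proof is terser but structurally identical; your extra care in justifying why $-\ws$ sends simple roots to simple roots, and your closing remark anticipating $\iota(\delta)=\delta$ (which the paper proves separately as \Cref{iotadelta}), are welcome elaborations but not departures in method.
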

\begin{proof}
It is clear from the definition that $\iota$ is an involution.
Further, since $-\ws$ induces an involution of $\J$ that preserves its Dynkin diagram structure (cf. \cite[pg 158]{BourbakiNicolas2008Lgal}), it follows that $\iota$ preserves the set of simple roots $\dynk$.
Now, it follows from \begin{align*}
    \alpha^\vee_i(\alpha_j)=\dfrac{\left(\alpha_i\mid\alpha_j\right)}{\left(\alpha_i\mid\alpha_i\right)}
\end{align*}
that the \emph{Cartan matrix} $\left(\alpha^\vee_i(\alpha_j)\right)_{ij}$ is preserved under $\iota$, and so $\iota$ preserves the Dynkin diagram structure on \dynk.
\end{proof}

\begin{cor}
\label{iotadelta}
We have the equality $\iota(\delta)=\delta$.
\end{cor}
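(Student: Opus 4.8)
The plan is to compute $\iota(\delta)$ directly from the description $\delta=\alpha_0+\theta_0$, using only \Cref{wsontheta} and the definition of $\iota$. The one structural observation needed at the outset is that, since $\alpha_d$ is cominuscule, the coefficient of $\alpha_d$ in $\delta$ is $1$; as $\alpha_d\neq\alpha_0$, this forces the coefficient of $\alpha_d$ in $\theta_0$ to be exactly $1$. Writing $\gamma\define\theta_0-\alpha_d$, we then have $\gamma\geq 0$ with $Supp(\gamma)\subset\dynk_0\setminus\{\alpha_d\}=\J$, so that $\iota$ acts on $\gamma$ by $\iota(\gamma)=-\ws(\gamma)$.

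With this in hand the computation is immediate. By linearity of $\iota$,
\[
    \iota(\delta)=\iota(\alpha_0)+\iota(\alpha_d)+\iota(\gamma)=\alpha_d+\alpha_0-\ws(\gamma).
\]
To identify $\ws(\gamma)$, recall that $\ws$ is an involution, so \Cref{wsontheta} gives both $\ws(\alpha_d)=\theta_0$ and $\ws(\theta_0)=\alpha_d$; hence $\ws(\gamma)=\ws(\theta_0)-\ws(\alpha_d)=\alpha_d-\theta_0$. Substituting back,
\[
    \iota(\delta)=\alpha_d+\alpha_0-(\alpha_d-\theta_0)=\alpha_0+\theta_0=\delta,
\]
which is the claim.

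Alternatively, one can argue more structurally: by \Cref{FormInv} the map $\iota$ is a Dynkin diagram automorphism of $\dynk$, so it permutes $\dynk$ and, by \Cref{ruleInv}, preserves the form $\left(\ \mid\ \right)$ on $V$; since the radical of that form is the line $\mathbf k\delta$ (recall $\left(\delta\mid\ \right)=0$), we get $\iota(\delta)=c\delta$, and then $\iota^2=\operatorname{id}$ together with positivity of the coefficients of $\iota(\delta)$ in the basis $\dynk$ forces $c=1$. I would keep the direct computation as the main argument, since it uses only results already established in the excerpt.

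There is no real obstacle here; the only place demanding care is the initial reduction — one must use cominuscularity of $\alpha_d$ to see that $\theta_0$ has $\alpha_d$-coefficient $1$, so that $\gamma$ is supported on $\J$ and $\iota$ is given on $\gamma$ by $-\ws$. Everything after that is bookkeeping with \Cref{wsontheta}.
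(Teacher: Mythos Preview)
Your proof is correct. The direct computation in your main argument is a genuinely different route from the paper's: the paper argues structurally, using \Cref{ruleInv} to see that $\iota(\delta)$ lies in the radical of $(\ \mid\ )$, hence $\iota(\delta)=k\delta$ for some integer $k$, and then invokes \Cref{FormInv} (positivity of the coefficients under a Dynkin automorphism) together with $\iota^2=\operatorname{id}$ to pin down $k=1$. Your approach bypasses the bilinear form entirely, relying only on the definition of $\iota$ and \Cref{wsontheta}; the decomposition $\theta_0=\alpha_d+\gamma$ with $Supp(\gamma)\subset\J$ is exactly what makes the linear extension of $\iota$ computable on $\delta$. This is more elementary and in fact logically prior to \Cref{ruleInv} and \Cref{FormInv}, so it could stand independently of them. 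Your alternative paragraph is essentially the paper's own proof.
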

\begin{proof}
We see from $\left(\delta\mid\ \right)=0$ and \Cref{ruleInv} that $\left(\iota(\delta)\mid\ \right)=0$.
Hence $\iota(\delta)=k\delta$ for some $k\in\mathbb Z$ (cf. \cite[\S5.6]{kac1994infinite}).
Further it follows from \Cref{FormInv} that $k>0$ and $k^2=1$.
We deduce that $k=1$, i.e., $\iota(\delta)=\delta$.
\end{proof}

\subsection{Action on $W$}
\label{actionW}
We also define an involution $\iota$ of $W$ given by \begin{align*}
\hspace{90pt}   {}^\iota s_\alpha\define s_{\iota(\alpha)}&&\text{for }\alpha\in\dynk
\end{align*}
It is clear that $\iota$ preserves the length and the Bruhat order on $W$, and\begin{align*}
    {}^\iota W_{\J} &=W_{\J}&{}^\iota W^\J&=W^\J&{}^\iota \ws&=\ws\\
    {}^\iota W_0    &=W_d   &{}^\iota W^0 &=W^d &{}^\iota w_d&=w_0
\end{align*}
Using \Cref{ruleInv} and \Cref{biFormRef}%
, we see that\begin{align*}
    \iota(s_\alpha(\iota(\beta)))=\iota\left(\iota(\beta)-2\frac{\left(\alpha\mid\iota(\beta)\right)}{\left(\alpha\mid\alpha\right)}\alpha\right) 
            =\beta-2\frac{\left(\iota(\alpha)\mid\beta\right)}{\left(\iota(\alpha)\mid\iota(\alpha)\right)}\iota(\alpha)=s_{\iota(\alpha)}(\beta)
\end{align*}
It follows that the action of $\iota$ on $w$ is the same as conjugation by $\iota$, where both $w$ and $\iota$ are viewed as elements of $GL(V)$, i.e.,\begin{align}\label{iotaConj}
    {}^\iota w=\iota w\iota
\end{align}
Note also that since Schubert varieties depend only on the underlying Dynkin diagrams, there exists an isomorphism $X_\J({}^\iota w)\cong X_\J(w)$ for any $w\in W$.

\subsection{The element $\tau_q$}
Let $w_0,\,w_d$ be the maximal elements in $W_0,\,W_d$ respectively, and let $\varpi_d^\vee$ be the fundamental co-weight dual to $\alpha_d$. 
Set\begin{align}
\label{defn:q}
    q\define w_0(\varpi_d^\vee)-\varpi_d^\vee\in\coroots_0,
\end{align}
and let $\tau_q\in W$ be the element corresponding to $q\in\coroots_0$ (see \Cref{sdp}).

\begin{prop}
\label{result:q}
We have the equality $\tau_q=w_0\ws w_d\ws=w_0^\J w_d^\J$.
\end{prop}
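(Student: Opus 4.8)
The plan is to establish the two equalities $\tau_q = w_0\ws w_d\ws$ and $w_0\ws w_d\ws = w_0^\J w_d^\J$ separately, and then to interpret the combined identity through the semi-direct product decomposition $W \cong W_0 \ltimes \coroots_0$ of \Cref{sdp}. For the second equality, recall that $\ws$ is the longest element of $W_\J$, so by the standard factorization of longest elements one has $w_0 = w_0^\J \ws$ and $w_d = w_d^\J \ws$ with $l(w_0) = l(w_0^\J) + l(\ws)$ and similarly for $w_d$. Substituting, $w_0\ws w_d\ws = (w_0^\J \ws)\ws(w_d^\J\ws)\ws = w_0^\J w_d^\J$ since $\ws^2 = 1$; this part is essentially formal once the minimal-representative factorizations are in hand.

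For the first equality I would compute the image of $w_0\ws w_d\ws$ under the isomorphism $W \to W_0\ltimes\coroots_0$ and check it equals $(1,q) = \tau_q$. The key observation is that $\iota$-conjugation on $W$ is exactly conjugation by $\iota \in GL(V)$ (equation \eqref{iotaConj}), and that $\iota$ restricted to $\J$ acts as $-\ws$; combined with ${}^\iota w_d = w_0$ from \Cref{actionW}, one can rewrite $\ws w_d \ws$ in terms of $\iota$ and $w_0$. More directly, I would verify the action on roots: since $\tau_q$ is characterized by $\tau_q(\delta) = \delta$ and $\tau_q(\alpha) = \alpha - \alpha(q)\delta$ for $\alpha \in \roots.0.$ (equation \eqref{form:tauq}), it suffices to show $w_0\ws w_d\ws$ fixes $\delta$ and acts on each $\alpha \in \roots.0.$ by the same translation formula with $q = w_0(\varpi_d^\vee) - \varpi_d^\vee$. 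Fixing $\delta$ is clear since all of $w_0, \ws, w_d$ lie in finite-type Weyl subgroups (of $W_0$, $W_\J$, $W_d$ respectively) and each preserves $\delta$ (indeed $\ws, w_d$ preserve $\roots.d.$ hence $\delta$, and $w_0$ preserves $\roots.0.$ and $\delta$). The translation part then reduces, after projecting to $W_0$, to showing that the $W_0$-component of $w_0\ws w_d\ws$ is trivial and extracting the coweight; using \Cref{wsontheta} ($\ws(\alpha_d) = \theta_0$, $\ws(\alpha_0) = \theta_d$) one tracks how $\ws w_d \ws$ differs from $w_0$ precisely by the element $s_{\theta_0}\tau_{-\theta_0^\vee}$-type data encoded in $\varpi_d^\vee$.

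An alternative and perhaps cleaner route for the first equality: write $s_{\alpha_0}$ in the semi-direct product as $(s_{\theta_0}, -\theta_0^\vee)$ per \Cref{sdp}, and observe that $\tau_q$ with $q = w_0(\varpi_d^\vee) - \varpi_d^\vee$ measures the failure of $\varpi_d^\vee$ to be $w_0$-invariant. Since $\alpha_d$ is cominuscule, $\varpi_d^\vee$ has a special form (its pairing with simple coroots is controlled), and $w_0\ws w_d\ws$ can be recognized as the minimal-length coset representative product $w_0^\J w_d^\J$, which by a length count $l(w_0^\J) + l(w_d^\J) = l(w_0) + l(w_d) - 2l(\ws)$ matches $l(\tau_q) = 2(\varpi_d^\vee, \theta_0)$ or the appropriate root-system quantity; matching lengths plus the fact that both lie in $W^\J$ and fix $\delta$ would pin down the identity. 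I expect the main obstacle to be the bookkeeping in identifying the coweight: one must carefully verify that the $W_0$-part of $w_0 \ws w_d \ws$ is the identity (equivalently that $w_0\ws w_d\ws \in \coroots_0$), which requires combining $w_0(\roots+.0.) = \roots-.0.$, the analogous statement for $w_d$ in $\roots.d.$, and the intertwining role of $\ws$ via \Cref{wsontheta}; the translation vector then falls out of applying the element to $\alpha_d$ and comparing with \eqref{form:tauq}.
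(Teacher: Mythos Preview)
Your approach is correct and essentially the same as the paper's: both reduce the first equality to checking $w_0\ws w_d\ws(\alpha)=\tau_q(\alpha)$ for each simple root $\alpha\in\dynk_0$ via \eqref{form:tauq}, with \Cref{wsontheta} and the involution $\iota$ (in particular ${}^\iota w_d=w_0$, \Cref{iotaConj}) as the computational tools; the paper carries this out by a case split on whether $\beta:=-w_0(\alpha_d)$ equals $\alpha_d$, treating $\alpha\in\{\alpha_d,\beta\}$ and $\alpha\in\J\setminus\{\beta\}$ separately. The second equality is indeed formal from $w_0=w_0^\J\ws$ and $w_d=w_d^\J\ws$, as you note; your alternative length-matching route would not pin down the element on its own, but you do not rely on it.
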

\begin{proof}
The set of roots \roots.. is contained in the $\mathbb Z$-span of the set $\dynk_0\cup\left\{\delta\right\}$.
Since the action of $W$ on \roots.. is faithful, it is enough to verify \begin{align}\label{work0}
\hspace{60pt}   &w_0\ws w_d\ws(\alpha)=\tau_q(\alpha)   &\forall\,\alpha\in\dynk_0\cup\left\{\delta\right\}
\end{align}
Further, since $\delta$ is fixed under the action of $W$ (cf. \Cref{sdp}), it is sufficient to verify \Cref{work0} for $\alpha\in\dynk_0$.
\par 
\par 
Recall from \Cref{weylInv} $-w_0$ induces an involution on $\dynk_0$.
Set $\beta=-w_0(\alpha_d)$, so that $-w_0(\varpi_d^\vee)=\varpi_\beta^\vee$, the fundamental co-weight dual to $\beta$.
It follows from \Cref{form:tauq,defn:q} that \begin{align}\label{work2}
    \tau_q(\alpha)  &=\alpha+\alpha(\varpi_d^\vee)\delta-\alpha(w_0(\varpi_d^\vee))\delta\\
                    &=\alpha+\alpha(\varpi_d^\vee+\varpi_\beta^\vee)\delta\nonumber
\end{align}
Hence we can rewrite \Cref{work0} as \begin{align}\label{work1}
\hspace{20pt}   w_0\ws w_d\ws(\alpha)   &=\alpha+\alpha(\varpi_d^\vee+\varpi_\beta^\vee)\delta  &\forall\,\alpha\in\dynk_0
\end{align}
Next, it follows from $\beta=-w_0(\alpha_d)$ that \begin{align}
        &\iota(\beta)=-\iota(w_0(\alpha_d))=-{}^\iota w_0(\iota(\alpha_d))&\text{using \Cref{iotaConj}}\nonumber\\
\implies&\iota(\beta)=-w_d(\alpha_0)&\text{using \Cref{actionW}}\label{work4}
\end{align}
Further, we have \begin{align}\hspace{40pt}
\label{work5}   w_d\ws(\alpha_d)&=w_d(\theta_0)=w_d(\delta-\alpha_0)        &\text{using \Cref{wsontheta}}\\
\nonumber                       &=\delta-w_d(\alpha_0)=\delta+\iota(\beta)  &\text{using \Cref{work4}}\\ 
\label{work6}   w_0\ws(\alpha_0)&={}^\iota(w_d\ws)(\iota(\alpha_d))         &\text{using \Cref{actionW}}\\
\nonumber                       &=\iota(\delta+\iota(\beta))=\delta+\beta   &\text{using \Cref{iotadelta}}
\end{align}   
We are now ready to prove that \Cref{work1} holds for $\alpha\in\left\{\alpha_d,\beta\right\}$.
\begin{itemize}
    \item[{\bf Case 1}] Suppose $\beta=\alpha_d$.
        Then $\iota(\beta)=\alpha_0$, $\varpi_d^\vee=\varpi_\beta^\vee$, and $q=\tau_{2\varpi_d^\vee}$.
        We have:\begin{align*}
            w_0\ws w_d\ws(\alpha_d) &=w_0\ws(\delta+\iota(\beta))   &\text{using \Cref{work5}}\\
                &=w_0\ws(\delta+\alpha_0)               &\text{using }\beta=\alpha_d\\
                &=w_0(\delta+\theta_d)                  &\text{using \Cref{wsontheta}}\\
                &=w_0(2\delta-\alpha_d)=2\delta+\beta   &\text{using }\beta=-w_0(\alpha_d)\\
                &=\alpha_d+2\delta=\tau_q(\alpha_d)     &\text{using }\beta=\alpha_d\text{ and \Cref{work2}}
        \end{align*}
    \item[{\bf Case 2}] Suppose $\beta\neq\alpha_d$. 
        Then $\beta,\iota(\beta)\in\J$, and $\varpi_\beta^\vee(\alpha_d)=\varpi_d^\vee(\beta)=0$.
        It follows from \Cref{defn:iota} that $\iota(\beta)=-\ws(\beta)$, hence $\ws(\iota(\beta))=-\beta$. 
        We have:\begin{align*}\hspace{20pt}
            w_0\ws w_d\ws(\alpha_d) &=w_0\ws(\delta+\iota(\beta))       &\qquad\text{using \Cref{work5}}\\
                                    &=w_0(\delta-\beta)=\delta-w_0(\beta)\\
                                    &=\delta+\alpha_d=\tau_q(\alpha_d)  &\qquad\text{using \Cref{work2}}\\
            w_0\ws w_d\ws(\beta)    &=w_0\ws w_d(-\iota(\beta))         &\qquad\\
                                    &=w_0\ws w_dw_d(\alpha_0)           &\qquad\text{using \Cref{work4}}\\
                                    &=w_0\ws(\alpha_0)=\delta+\beta     &\qquad\text{using \Cref{work6}}\\
                                    &=\tau_q(\beta)                     &\qquad\text{using \Cref{work2}}
        \end{align*}
\end{itemize}
Finally, we prove that \Cref{work1} holds for any $\alpha\in\dynk_0\backslash\left\{\alpha_d,\beta\right\}=\J\backslash\left\{\beta\right\}$.
Since $\alpha\in\dynk_0$, we have $-w_0(\alpha)\in\dynk_0$.
Observe further that since $\alpha\neq\beta$, we have $-w_0(\alpha)\neq\alpha_d$, and so $-w_0(\alpha)\in\J$.
Applying \Cref{defn:iota}, we get\begin{align}
        &\iota(-w_0(\alpha))=\ws w_0(\alpha)\nonumber\\
\implies&\ws\iota w_0(\alpha)=-w_0(\alpha)\label{work3}
\end{align}
We see from \Cref{work2} that $\tau_q(\alpha)=\alpha$.
We compute:\begin{align*}
    w_0\ws w_d\ws(\alpha)   &=-w_0\ws{}^\iota w_0\iota(\alpha)&\text{using }\alpha\not\in\J,\,\text{and \Cref{defn:iota}}\\
    &=-w_0\ws\iota w_0(\alpha)              &\text{using \Cref{iotaConj}}\\
    &=w_0^2(\alpha)=\alpha=\tau_q(\alpha)   &\text{using \Cref{work3}}
\end{align*}
\end{proof}

\begin{lemma}
\label{nilp}
Let $\Psi\define\roots-.d.\backslash\roots.\J.$, and consider some $\gamma\in\dynk_0\cup\pm\J$.
Then \begin{enumerate}
    \item All subsets of $\Psi$ are nilpotent (see \Cref{nilpotentRoots}).
    \item The set $\Psi\cup\{\gamma\}$ is nilpotent.
\end{enumerate}
\end{lemma}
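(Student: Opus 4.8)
The plan is to derive everything from one numerical consequence of cominuscularity: \emph{every root of $\roots.d.$ has $\alpha_0$-coefficient in $\{-1,0,1\}$}. Since $\alpha_d$ is cominuscule, the coefficient of $\alpha_d$ in $\delta=\alpha_0+\theta_0$ is $1$, and as $\alpha_0\neq\alpha_d$ this forces the coefficient of $\alpha_d$ in $\theta_0$ to be $1$; hence $\theta_d=\delta-\alpha_d$, expressed in the simple roots $\dynk_d=\J\sqcup\{\alpha_0\}$, has $\alpha_0$-coefficient $1$. As $\theta_d$ is the highest root of the finite type system $\roots.d.$, every root of $\roots.d.$ has $\alpha_0$-coefficient between $-1$ and $1$. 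Thus $\Psi=\roots-.d.\backslash\roots.\J.$ is precisely the set of negative roots of $\roots.d.$ with $\alpha_0$-coefficient equal to $-1$. I will also use the standard fact that a root of $\roots..$ supported on the proper subdiagram $\dynk_d$ already lies in $\roots.d.$.

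For part (1): if $\alpha,\beta\in\Psi$ then $\alpha+\beta$ is supported on $\dynk_d$ and has $\alpha_0$-coefficient $-2$, so it is not a root. Hence no two elements of $\Psi$ sum to a root, and every subset $\Psi'\subseteq\Psi$ is closed for the trivial reason that the hypothesis of closedness is never met. Prenilpotence of $\Psi'$ also follows at once: $\Psi'$ is a finite set of real roots, the identity maps it into $\roots-..$, and the longest element $w_d$ of $W_d$ maps $\Psi'\subseteq\roots-.d.$ into $\roots+.d.\subseteq\roots+..$. So every subset of $\Psi$ is nilpotent.

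For part (2): since $\gamma$ is supported on $\dynk_0$, which contains $\J$ but not $\alpha_0$, its $\alpha_0$-coefficient is $0$. Closedness: by part (1) a sum of two elements of $\Psi$ is not a root, and $2\gamma$ is not a root, so only the sums $\gamma+\beta$ with $\beta\in\Psi$ need attention. If $\gamma=\alpha_d$ then $\gamma+\beta$ has $\alpha_d$-coefficient $1$ but $\alpha_0$-coefficient $-1$, so it is not a root; if $\gamma\in\pm\J$ then $\gamma+\beta$ is supported on $\dynk_d$ with $\alpha_0$-coefficient $-1$, so if it is a root it lies in $\roots.d.$, and having a negative coefficient it is a negative root with $\alpha_0$ in its support, i.e. it lies in $\Psi$. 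Either way $\Psi\cup\{\gamma\}$ is closed. Prenilpotence will be witnessed by explicit Weyl group elements. To push $\Psi\cup\{\gamma\}$ into $\roots-..$: use the identity if $\gamma\in-\J$; use $s_\gamma$ if $\gamma\in\dynk_0$, noting that then $-\gamma$ is supported away from $\alpha_0$, hence $-\gamma\notin\Psi$, so $s_\gamma(\Psi)\subseteq\roots-..$ and $s_\gamma(\gamma)=-\gamma<0$. To push $\Psi\cup\{\gamma\}$ into $\roots+..$: since $w_d^\J=w_d\ws$ and $\ws\in W_\J$ leaves every $\alpha_0$-coefficient unchanged, we have $\ws(\Psi)=\Psi$ and hence $w_d^\J(\Psi)=w_d(\Psi)\subseteq\roots+.d.$; moreover $w_d^\J$ belongs to $W_d^\J=W_d\cap W^\J$, so it sends every root of $\roots+.\J.$ to a positive root, and it sends $\alpha_d$ (whose coefficient it does not change) to a positive root; thus $w_d^\J$ works when $\gamma\in\dynk_0$. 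Finally, when $\gamma\in-\J$, say $\gamma=-\mu$ with $\mu\in\dynk_d$, the element $w_d$ sends $\Psi$ into $\roots+.d.$ and $\gamma$ to $-w_d(\mu)>0$, so $w_d$ works. Therefore $\Psi\cup\{\gamma\}$ is prenilpotent, hence nilpotent.

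The only essential input is the cominuscule hypothesis, which is exactly what restricts the $\alpha_0$-coefficient of every root of $\roots.d.$ to $\{-1,0,1\}$; everything else is bookkeeping with supports and signs. The step I expect to require the most care is establishing $\ws(\Psi)=\Psi$ and hence $w_d^\J(\Psi)=w_d(\Psi)$ — that is, controlling the sign of $w(\beta)$ for the relevant roots $\beta$ and $w\in\{w_d,\,w_d^\J,\,\ws\}$.
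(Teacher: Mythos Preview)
Your argument is correct and essentially matches the paper's: both derive closedness from the bound on $\alpha_0$-coefficients forced by cominuscularity of $\alpha_0$ in $\dynk_d$, and both exhibit explicit Weyl group witnesses for prenilpotence. The only cosmetic difference is your uniform choice of $w_d^\J=w_d\ws$ to push $\Psi\cup\{\gamma\}$ into $\roots+..$ for all $\gamma\in\dynk_0$, whereas the paper uses $w_d$ (for $\gamma=\alpha_d$) and $w_ds_\gamma$ (for $\gamma\in\J$); the step you flagged, $\ws(\Psi)=\Psi$, is valid for exactly the reason you give.
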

\begin{proof}
First observe that 
$    \Psi=\roots-.d.\backslash\roots.\J.=\left\{\alpha\in\mathbb Z\dynk_d\mid-\theta_d\leq\alpha\leq-\alpha_0\right\}$.
Consider $\alpha,\beta\in\Psi$. 
Then $Supp(\alpha+\beta)\subset\dynk_d$.
Further, we have \begin{align*}
    \alpha,\beta\leq-\alpha_0\implies\alpha+\beta\leq-2\alpha_0.
\end{align*}
Now, since $\alpha_0$ is cominuscule in $\dynk_d$, it follows that $\alpha+\beta\not\in\roots..$.
Consequently, every subset of $\Psi$ is closed. 
Next, we prove that $\Psi\cup\left\{\gamma\right\}$ is closed.
\begin{enumerate}[leftmargin=*]
    \item   Suppose $\gamma=\alpha_d$.
        Consider $\alpha\in\Psi$. 
        The coefficient of $\alpha_0$ in $\alpha+\gamma$ is $-1$, and the coefficient of $\alpha_d$ is $1$.
        Hence, $\alpha+\gamma$ is not a root.
    \item   Suppose $\gamma\in\J$.
        Suppose further that $\alpha+\gamma\in\roots..$ for some $\alpha\in\Psi$. 
        Since $Supp(\alpha+\gamma)\subset\dynk_d$, we see that $\alpha+\gamma\in\roots.d.$. 
        Further, since the coefficient of $\alpha_0$ in $\alpha+\gamma$ is $-1$, we have $\alpha+\gamma\not\in\roots.\J.$. 
    \item   Suppose $\gamma\in-\J$. 
        Suppose further that $\alpha+\gamma\in\roots..$ for some $\alpha\in\Psi$. 
        Then $\alpha+\gamma\leq-\alpha_0$ and $\alpha+\gamma\in\roots.d.$.
        It follows that $\alpha+\gamma\in\Psi$.
\end{enumerate}
Finally, consider $u_+,u_-\in W$ given by\begin{align*}
    u_+=\begin{cases} w_d   &\text{if }\gamma=\alpha_d\\
            w_ds_\gamma     &\text{if }\gamma\in\J\\
            w_d             &\text{if }\gamma\in-\J  
        \end{cases}
&&  u_-=\begin{cases} s_{\alpha_d}  &\text{if }\gamma=\alpha_d\\
            s_\gamma                &\text{if }\gamma\in\J\\
            1                       &\text{if }\gamma\in-\J  
        \end{cases}
\end{align*}
It is easy to verify that $u_\pm\left(\Psi\cup\left\{\gamma\right\}\right)\subset\roots\pm..$.
\end{proof}

Recall from \Cref{defgpsi} the Lie sub-algebra $\lgl^\Psi\define\sum\limits_{\alpha\in\Psi}\lgl^\alpha$. 
Recall also from \Cref{titsFunctor} the isomorphism $\eta:\lgl^\Psi\rightarrow U_\Psi$ for some ordering on $\Psi$. 
\begin{prop}
\label{fibreId}
There exists a \para-equivariant isomorphism $\phi:\u_P\rightarrow X^-_\J(w_d^\J)$ given by $$\phi(X)=\eta(t^{-1}X)\,\mod\Para$$
where $X^-_\J(w_d^\J)$ is the opposite cell defined in \Cref{oppositeCell}.
\end{prop}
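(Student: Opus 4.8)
The plan is to produce the map $\phi$ explicitly on the level of schemes and then verify the three assertions: that it lands in $X^-_\J(w_d^\J)$, that it is an isomorphism onto, and that it is $P$-equivariant. First I would set up notation: recall from \Cref{fibreId}'s hypotheses that $\Psi=\roots-.d.\backslash\roots.\J.$ is nilpotent with every subset nilpotent (\Cref{nilp}(1)), so $\eta:\lgl^\Psi\to U_\Psi$ is a scheme isomorphism by \Cref{titsFunctor}. The key structural fact is that the Lie algebra $\u_P=\bigoplus_{\alpha\in\roots+.0.\backslash\roots+.\J.}\lgl^\alpha$ of the unipotent radical of $P$ is mapped, under multiplication by $t^{-1}$, onto $\bigoplus_{\alpha\in\roots+.0.\backslash\roots+.\J.}\lgl^{\alpha-\delta}\tensor\mathbf k$. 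So I would first check that $\alpha\mapsto\alpha-\delta$ is a bijection from $\roots+.0.\backslash\roots+.\J.$ onto $\Psi$: indeed, for $\alpha\in\roots+.0.$ with $\alpha\not\in\roots+.\J.$, the coefficient of $\alpha_d$ in $\alpha$ is exactly $1$ (since $\alpha_d$ is cominuscule and appears in $\alpha$), so $\alpha-\delta=\alpha-\alpha_d-\theta_0$ lies in $\mathbb Z\dynk_d$ with $\alpha_0$-coefficient $-1$, hence (matching the description $\Psi=\{\beta\in\mathbb Z\dynk_d\mid-\theta_d\leq\beta\leq-\alpha_0\}$ from the proof of \Cref{nilp}) it lies in $\Psi$; conversely every element of $\Psi$ arises this way. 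Thus $t^{-1}\u_P$ is precisely $\lgl^\Psi$ (after the identification $\lgl^{\alpha-\delta}\subset L\lgl=\lgl\tensor\mathcal K$, $\lgl^{\alpha-\delta}=\lgl^\alpha\tensor t^{-1}$), and the composite $X\mapsto\eta(t^{-1}X)\mod\Para$ is a morphism $\u_P\to\gL/\Para$.

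Next I would identify the image with the opposite cell. By \Cref{oppositeCell}, applied with $\mathcal L=\dynk_d$ (so $\mathcal L\subsetneq\dynk$, $w_{\mathcal L}=w_d$), the opposite cell $X^-_\J(w_d^\J)$ is the image of the open immersion $\prod_{\alpha\in\roots-.d.\backslash\roots.\J.}U_\alpha\to\gL/\Para$, $(u_\alpha)\mapsto\prod u_\alpha\mod\Para$ — but $\roots-.d.\backslash\roots.\J.$ is exactly $\Psi$, and for any ordering of $\Psi$ the product map $\prod_{\alpha\in\Psi}U_\alpha\to U_\Psi$ is an isomorphism (\Cref{titsFunctor}). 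Hence the composite $U_\Psi\to\gL/\Para$, $u\mapsto u\mod\Para$, is an open immersion onto $X^-_\J(w_d^\J)$; precomposing with the isomorphism $\eta:\lgl^\Psi\to U_\Psi$ and the (linear) isomorphism $t^{-1}:\u_P\xrightarrow{\sim}\lgl^\Psi$ shows $\phi$ is an isomorphism of $\u_P$ onto $X^-_\J(w_d^\J)$.

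Finally, the $P$-equivariance. Here $P$ acts on $\u_P$ by the adjoint (conjugation) action, and on $X^-_\J(w_d^\J)\subset\gL/\Para$ by left translation (noting $P=\gl\cap\gd$ normalizes neither piece a priori, so one must check the orbit stays in the opposite cell). The computation is: for $p\in P$ and $X\in\u_P$, one wants $\phi(\mathrm{Ad}(p)X)=p\cdot\phi(X)=p\,\eta(t^{-1}X)\mod\Para$. Since $\eta$ is built from the root groups $U_\alpha$ for $\alpha\in\Psi$ and conjugation by $p\in P\subset\gO$ satisfies $\mathrm{Ad}(p)$ on $t^{-1}\lgl^\alpha$ corresponds to $p\,U_{\alpha}(t^{-1}x)\,p^{-1}$, and $p$ normalizes $\Para=\gO\cap\gd$ modulo... — more precisely, write $p\,\eta(t^{-1}X) = \eta(\mathrm{Ad}(p)(t^{-1}X))\,p = \eta(t^{-1}\mathrm{Ad}(p)X)\,p$ using that $p$ commutes with multiplication by $t^{-1}$ (it is a constant loop) and that $p$ normalizes $\u_P$; then $p\in P\subset\Para$ is killed modulo $\Para$. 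I expect \textbf{this last equivariance verification to be the main obstacle}: one must be careful that $\mathrm{Ad}(p)$ really preserves $\lgl^\Psi=t^{-1}\u_P$ (it does, because $p$ normalizes $U_P$ hence $\u_P$, and commutes with $t$), and that the $U_\Psi$-functoriality of Tits ($p\,U_\Psi\,p^{-1}=U_{\mathrm{Ad}(p)\Psi}$ compatibly with $\eta$) is being applied legitimately — this uses that $\mathrm{Ad}(p)$ stabilizes the nilpotent set $\Psi$ as a subset of $L\lgl$, which follows from $\mathrm{Ad}(p)\u_P=\u_P$. Once that bookkeeping is done, the identity $\phi\circ\mathrm{Ad}(p)=(\text{left mult. by }p)\circ\phi$ follows formally, and since the left-hand side lands in $\u_P$'s image and $p\in\Para$ the right side reduces to $\phi(\mathrm{Ad}(p)X)$, completing the proof.
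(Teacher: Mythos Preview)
Your argument for the isomorphism part---that $\alpha\mapsto\alpha-\delta$ is a bijection $\roots+.0.\backslash\roots+.\J.\to\Psi$, and that composing $t^{-1}$, $\eta$, and the opposite-cell parametrization of \Cref{oppositeCell} (with $\mathcal L=\dynk_d$) yields the isomorphism $\phi$---is correct and is exactly the paper's argument.

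The equivariance step, however, has a genuine gap. You correctly isolate the needed identity $p\,\eta(Y)\,p^{-1}=\eta(\mathrm{Ad}(p)Y)$ for $Y\in\lgl^\Psi$ and $p\in P$, and you correctly observe that $\mathrm{Ad}(p)$ stabilizes $\lgl^\Psi$ (since $p$ normalizes $\u_P$ and commutes with $t^{-1}$). But your appeal to ``$U_\Psi$-functoriality of Tits'' for a general $p\in P$ is not well-formed: Tits' construction is functorial in the nilpotent set of roots, and $\mathrm{Ad}(p)$ for a generic $p$ neither permutes roots nor arises from a root-system symmetry. In positive characteristic there is no exponential map to fall back on, so the compatibility of $\eta$ with conjugation is not automatic. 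Tellingly, you cite \Cref{nilp}(1) but never invoke \Cref{nilp}(2)---and that is exactly the missing ingredient.

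The paper's fix is to reduce to generators: write $P=\langle T,\,U_\gamma\mid\gamma\in\dynk_0\cup(-\J)\rangle$. Equivariance under $T$ is built into the root-by-root definition of $\eta$. For each $\gamma\in\dynk_0\cup(-\J)$, \Cref{nilp}(2) shows that $\Psi\cup\{\gamma\}$ is nilpotent, so Tits' functor yields a group scheme $U_{\Psi\cup\{\gamma\}}$ containing both $U_\gamma$ and $U_\Psi$. The conjugation action of $U_\gamma$ on $U_\Psi$ (resp.\ on $\lgl^\Psi$) is then the restriction of the adjoint action of $U_{\Psi\cup\{\gamma\}}$ on itself (resp.\ on $\lgl^{\Psi\cup\{\gamma\}}$), and these are compatible via $\eta$ by construction of Tits' functor. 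That is what makes the $P$-equivariance go through in all characteristics.
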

\begin{proof}
Observe that $\u_P=\lgl^{\roots+.0.\backslash\roots.\J.}$.
The map $X\mapsto t^{-1}X$ is \gl-equivariant (hence also \para-equivariant), and takes the root space $\alpha$ to $\alpha-\delta$.
Now, since\begin{align}
\label{minusdelta}
    \roots+.0.\backslash\roots.\J.-\delta &=\left\{\alpha-\delta\mid\alpha\in\roots.0.\backslash\roots.\J.\right\}\\ 
        &=\left\{\alpha-\delta\mid\alpha\in\mathbb Z\dynk,\,\alpha_d\leq\alpha\leq\theta_0\right\}\nonumber\\
        &=\left\{\alpha\in\mathbb Z\dynk\mid-\theta_d\leq\alpha\leq-\alpha_0\right\}=\roots-.d.\backslash\roots.\J.\nonumber
\end{align}
we have a map $t^{-1}:\u_P\rightarrow\lgl^\Psi$, where $\Psi\define\roots-.d.\backslash\roots.\J.$.
It follows from \Cref{oppositeCell} that $\eta\circ t^{-1}$ is an isomorphism from $\u_P$ to $Y_\J(w_d^\J)$.
It remains to show that $\eta$ is $P$-equivariant.
For $\gamma\in\dynk_0\cup-\J$, it follows from \Cref{nilp} that the group scheme $U_{\Psi\cup\{\gamma\}}$ is well-defined.
The action of $U_\gamma$ on $U_\Psi$ (resp. $\lgl^\Psi$) being the restriction of the adjoint action of $U_{\Psi\cup\{\gamma\}}$ on itself (resp. $\lgl^{\Psi\cup\{\gamma\}}$), the map $\eta$ is $U_\gamma$-equivariant.
It follows that $\eta$ is $P$-equivariant, since $P=\left\langle T,\,U_\gamma\mid\gamma\in\dynk_0\cup-\J\right\rangle$, and $\eta$ is $T$-equivariant by construction.
\end{proof}

\begin{theorem}
\label{form:phip}
Recall that $\u_P$ is the cotangent space of $G/P$ at identity. 
Let $q=w_0(\varpi_d^\vee)-\varpi_d^\vee$ as in \Cref{defn:q}.
The map $\phi:\u_P\rightarrow X^-_\J(w_d^\J)$ extends to a \gl-equivariant isomorphism $\phi:T^*\gl/\para\rightarrow Y_\J(\tau_q)$ (see \Cref{bigLGcell}) given by \begin{align*}
\hspace{80pt}   \phi(g,X)=g\,\phi(X)\,\mod\Para&&\text{for }g\in\gl,\ X\in\u_\para
\end{align*}
Let $\boldsymbol\theta:T^*G/P\rightarrow\lgl$ be the map given by $(g,X)\mapsto Ad(g)X$, and $\Ni=\operatorname{Im}(\boldsymbol\theta)$.
Let $\operatorname{pr}$ be the restriction of the quotient map $\gL/\Para\rightarrow\gL/\gO$ to $X^-_\J(\tau_q)$.
There exists an isomorphism $\Ni\rightarrow X^-_{\dynk_0}(\tau_q)$ such that the following diagram commutes:
\begin{center}
\begin{tikzcd}
    T^*\gl/\para\arrow[hookrightarrow, r, "\phi"] \arrow[d, twoheadrightarrow, "\boldsymbol\theta"]  & Y_\J(\tau_q)\arrow[d, twoheadrightarrow, "\operatorname{pr}"] \\
    \Ni\arrow[hookrightarrow, r, "\psi"]    & X^-_{\dynk_0}(\tau_q)
\end{tikzcd}
\end{center}
\end{theorem}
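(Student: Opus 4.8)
The plan is to build the two isomorphisms separately and then check the square commutes. First I would handle the extension of $\phi$ to all of $T^*G/P$. Since $T^*G/P=G\times^P\u_P$ and the formula $\phi(g,X)=g\,\phi(X)\bmod\Para$ is manifestly well-defined on the quotient provided $\phi:\u_P\to X^-_\J(w_d^\J)$ is $P$-equivariant (which is \Cref{fibreId}), the only issue is identifying the image. Here I would use the $L^-\gl$-cell description: by definition $Y_\J(\tau_q)=L^-\gl\,\boldsymbol e\cap X_\J(\tau_q)$, and the $G$-translates of the opposite cell $X^-_\J(w_d^\J)$ sweep out exactly the $L^-G$-orbit portion of the bigger Schubert variety. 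Concretely, I would show $G\cdot X^-_\J(w_d^\J)=Y_\J(\tau_q)$ by noting that $X_\J(\tau_q)$ is $G$-stable (since $\tau_q=w_0\ws w_dw_0^\J$... — more precisely, using \Cref{result:q}, $\tau_q=w_0^\J w_d^\J$ and $w_0\in W_0$ so $G$ acts on $X_\J(\tau_q)$ because $\dynk_0\subset\dynk$ and the corresponding parabolic $\gO\supset G$), and that $X^-_\J(w_d^\J)$ is a slice to the $G$-action meeting every $G$-orbit. Counting dimensions: $\dim Y_\J(\tau_q)=l(\tau_q)=l(w_0^\J w_d^\J)$, and by \Cref{result:q} and the semidirect product decomposition this equals $2\dim G/P$, matching $\dim T^*G/P$; together with $G$-equivariance and injectivity on the open cell this forces $\phi$ to be an isomorphism onto $Y_\J(\tau_q)$.

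Next I would construct $\psi$. The map $\boldsymbol\theta:(g,X)\mapsto Ad(g)X$ is the moment map for $G/P$; its image $\Ni$ is the closure of the Richardson/nilpotent orbit attached to $\u_P$ (a single nilpotent orbit closure since $\alpha_d$ is cominuscule, so $\u_P$ is abelian). On the affine side, $\operatorname{pr}:\gL/\Para\to\gL/\gO$ restricted to the cell sends $g\,\eta(t^{-1}X)\bmod\Para$ to $g\,\eta(t^{-1}X)\bmod\gO$, and I claim this factors through $\boldsymbol\theta$, i.e. depends only on $Ad(g)X$. This is the heart of the matter: $\eta(t^{-1}X)\bmod\gO$ is unchanged under right multiplication by $\gO$, and conjugating by $g\in G\subset\gO$ I would compute $g\,\eta(t^{-1}X)\,g^{-1}=\eta(t^{-1}Ad(g)X)$ (using that $\eta$ intertwines the adjoint actions, which is exactly the $P$-equivariance argument of \Cref{fibreId} extended to all of $G$ — the relevant sets of roots $\Psi\cup\{\gamma\}$ for $\gamma\in\dynk_0$ are nilpotent by \Cref{nilp}(2)). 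Hence $g\,\eta(t^{-1}X)\bmod\gO=\eta(t^{-1}Ad(g)X)\,g\bmod\gO=\eta(t^{-1}Ad(g)X)\bmod\gO$, which depends only on $Ad(g)X\in\Ni$. This gives a well-defined $\psi:\Ni\to\gL/\gO$ making the square commute, and its image lands in $X^-_{\dynk_0}(\tau_q)$ because $\operatorname{pr}$ maps $Y_\J(\tau_q)$ into the corresponding cell of $\gL/\gO$ (the image Schubert variety is $X_{\dynk_0}(\tau_q^{\dynk_0})=X_{\dynk_0}(\tau_q)$ since $\tau_q\in W^{\dynk_0}$, which again follows from $\tau_q=w_0^\J w_d^\J$ and support considerations).

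Finally I would check $\psi$ is an isomorphism onto $X^-_{\dynk_0}(\tau_q)$. Surjectivity follows from surjectivity of $\boldsymbol\theta$ onto $\Ni$ together with surjectivity of $\operatorname{pr}$ on cells and commutativity of the square. For injectivity, the fibre of $\operatorname{pr}$ over a point of $\gL/\gO$ is a copy of $\gO/\Para\cong G/P$, and the fibre of $\boldsymbol\theta$ over a generic (regular) point of $\Ni$ is also identified with $G/P$ via the parabolic stabilizing the nilpotent — the Springer-type fibre — so $\phi$ restricts to an isomorphism between these fibres, and by $G$-equivariance and the fact that everything is determined on a dense open subset this propagates. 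Since all varieties in sight are the $L^-G$-cells of normal Schubert varieties (hence normal), a bijective equivariant morphism between them that is an isomorphism on a dense open set is an isomorphism. The main obstacle I anticipate is the clean identification of the fibres of $\boldsymbol\theta$ with those of $\operatorname{pr}$ uniformly over all of $\Ni$ (not just the open orbit) — i.e. showing the map is an isomorphism rather than merely a bijective morphism — which is where one genuinely needs normality of $X_{\dynk_0}(\tau_q)$ and $Y_\J(\tau_q)$ and the cominuscule hypothesis that makes $\u_P$ abelian, keeping the relevant sets of roots nilpotent so that Tits' functor behaves well.
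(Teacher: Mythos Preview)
Your approach to the first half (extending $\phi$ to $T^*G/P$ and identifying the image with $Y_\J(\tau_q)$) is in the same spirit as the paper, though the paper simply invokes \cite[Theorem~1.3]{lakshmibai2015cotangent} for well-definedness, $G$-equivariance, and the containment $\phi(T^*G/P)\subset X_\J(\tau_q)$, and then writes the one-line chain $G X^-_\J(w_d^\J)=G\Borel^- X^-_\J(w_d^\J)=Y_\J(\tau_q)$ rather than arguing by dimension count.

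Where your proposal diverges substantially is in the treatment of $\psi$, and there is a genuine gap. You attempt to \emph{construct} $\psi$ by showing that $\operatorname{pr}\circ\phi$ factors through $\boldsymbol\theta$, via the computation $g\,\eta(t^{-1}X)\,g^{-1}=\eta(t^{-1}Ad(g)X)$ for $g\in G$. But the right-hand side is not well-defined: $\eta$ is the isomorphism $\lgl^\Psi\to U_\Psi$ with $\Psi=\roots-.d.\backslash\roots.\J.$, so $\eta(t^{-1}Y)$ only makes sense when $Y\in\u_P$. For general $g\in G$ the element $Ad(g)X$ lies in $\Ni\subset\lgl$ but has no reason to lie in $\u_P$, so the expression $\eta(t^{-1}Ad(g)X)$ is undefined. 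Relatedly, your appeal to \Cref{nilp}(2) only covers $\gamma\in\dynk_0\cup\pm\J$, which generates $P$, not $G$; extending the equivariance of $\eta$ from $P$ to $G$ is precisely what fails, because $\Psi$ is not stable under $Ad(G)$.

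The paper sidesteps this entirely: it does not construct $\psi$ from the diagram but instead imports the isomorphism $\psi:\Ni\xrightarrow{\sim}X^-_{\dynk_0}(\tau_q)$ from Achar--Henderson \cite{achar2012geometric,achar2013geometric} (specifically Proposition~6.6 of \cite{achar2012geometric}), and then checks commutativity only over the identity fibre, namely $\operatorname{pr}(\phi(1,X))=\phi(X)\bmod\gO=\psi(\boldsymbol\theta(1,X))$; $G$-equivariance of both $\phi$ and $\psi$ then propagates this to all of $T^*G/P$. If you want to avoid citing Achar--Henderson, you would need an independent definition of $\psi$ on all of $\Ni$ (for instance via lattices, or via an exponential-type map valid for arbitrary nilpotents rather than just those in $\lgl^\Psi$), and then prove it is an isomorphism; your fibre-comparison sketch is a reasonable outline for the latter but does not address the former.
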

\begin{proof}
The proof of \cite[Theorem 1.3]{lakshmibai2015cotangent} applies to show that $\phi$ is well-defined and \gl-equivariant, and\begin{align*}
\hspace{60pt}   \phi(T^*G/P)\subset X_\J(w_0^\J w_d^\J)=X_\J(\tau_q)&&\qquad\text{using \Cref{defn:q}}
\end{align*}
Further, since $\phi$ is \gl-equivariant and $\phi(1,\u_P)=X^-_\J(w_d^\J)$, we have
    $$\phi(T^*G/P)=GX_\J^-(w_d^\J)=G\Borel^-X_J^-(w_d^\J)=Y_\J(\tau_q).$$ 
The isomorphism $\psi$ is from \cite{achar2013geometric,achar2012geometric} (in particular, see Proposition 6.6 of \cite{achar2012geometric}).
We verify that for $X\in\u_P$, we have $$\operatorname{pr}(\phi(1,X))=\phi(X)\,\mod\gO=\psi(\boldsymbol\theta(X)).$$
Since $\phi$, $\psi$ are \gl-equivariant isomorphism (\emph{loc. cit.}), the diagram commutes.
\end{proof}

\section{The Conormal Variety of a Schubert variety}
\label{good}
Let $\dynk_0$ be a finite type Dynkin diagram and \dynk\ the associated extended diagram.
Fix a cominuscule root $\alpha_d\in\dynk_0$ and let $\J,\iota,\gl,\gL,\para,\Para$, and $\phi:T^*G/P\hookrightarrow\gL/\Para$ be as in the previous section.
We fix $w\in W_0^\J\define W_0\cap W^\J$ and set\begin{align}\label{defn:v}
    v\define{}^\iota(w_0w\ws).
\end{align}
Let \con\ be the conormal variety of $X_\J(w)$ in $G/P$.
It follows from \Cref{form:phip} that the closure of $\phi(\con)$ in $X_\J(\tau_q)$ is a $B$-stable compactification of \con.
The primary goal of this section is \Cref{mainResult}:
This compactification of $\phi(\con)$ is a Schubert subvariety of $X_\J(\tau_q)$ if and only if $X_\J(w_0w\ws)$ is smooth.
This yields powerful results (\Cref{main1,main2}) regarding the geometry of \con\ in the case where $X_\J(w_0w\ws)$ is smooth. 
\par 
\par 
We first introduce the notion of the conormal variety and give a combinatorial description of \con\ in \Cref{prop:uw}.  
We then develop a sequence of lemmas leading to the proof of \Cref{mainResult}.
Finally, we show in \Cref{conFibreId} that the fibre at identity of \con\ can be identified as the union of opposite cells of certain Schubert varieties. 

\subsection{The Conormal Variety}
Consider the Schubert variety $X_\J(w)$, viewed as a subvariety of $G/P$.
For $x$ a smooth point in $X_\J(w)$, the \emph{conormal fibre} $N^*_xX_\Para(w)$ is the annihilator of $T_xX_\J(w)$ in $T^*_xG/P$, i.e., for $x$ a \emph{smooth} point,
\begin{align*}
    N^*_xX_\Para(w)=\left\{f\in T^*_xG/P\mid f(v)=0\ \forall v\in T_xX_\Para(w)\right\}
\end{align*}
The \emph{conormal variety} \con\ of $X_\J(w)\hookrightarrow\gl/\para$ is the closure in $T^*\gl/\para$ of the conormal bundle of the smooth locus of $X_\J(w)$.

\begin{prop}
\label{prop:uw}
Let $\rw\define\left\{\alpha\in\proots\mid\alpha\geq\alpha_d,\,w(\alpha)>0\right\}$ and $\lgl^R=\sum\limits_{\alpha\in R}\lgl^\alpha$.
The conormal variety \con\ is the closure in $T^*\gl/\para$ of 
$$  \left\{(bw,X)\in\gl\times^\para\u_\para\mid b\in\borel,\,X\in\lgl^R\right\}$$
\end{prop}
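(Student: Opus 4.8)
The plan is to compute the conormal variety by working at the identity coset $\bar e = P/P \in G/P$ and then translating by $B$. Since the conormal variety $N^*X_\J(w)$ is $B$-stable (being the closure of the conormal bundle over the $B$-stable smooth locus), and since the big cell $B\bar e \subset G/P$ meets $X_\J(w)$, it suffices to identify the fibre of $N^*X_\J(w)$ over a suitable dense $B$-orbit, then take the $B$-orbit of that fibre and close up. Concretely, I would use the $P$-bundle presentation $T^*G/P = G\times^P \u_P$ from the preceding subsections, and note that the conormal bundle over the open Schubert cell $BwP/P$ inside $X_\J(w)$ is determined by its fibre at the point $wP/P$.

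The key computation is the description of the conormal space at $wP/P$. First, I would identify the tangent space to $X_\J(w)$ at the point $w\bar e$: writing things in the $w$-translated coordinates, the tangent space to the Schubert cell $Bw\bar e$ at $w\bar e$ is spanned by the root spaces $\lgl^{-\alpha}$ for those $\alpha\in\proots\setminus\roots+.\J.$ with $w(\alpha)<0$ — equivalently, $w^{-1}$ sends a positive root of $G/P$ to a negative one. Dually, under the Killing-form identification $T^*_{\bar e}G/P \cong \u_P = \bigoplus_{\alpha\in\proots\setminus\roots+.\J.}\lgl^{\alpha}$, the conormal fibre at $w\bar e$ (transported back to $\bar e$ via the $P$-bundle structure, i.e. acting by $w$) is the annihilator, which picks out precisely the root spaces $\lgl^{\alpha}$ with $\alpha\in\proots\setminus\roots+.\J.$ and $w(\alpha)>0$. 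Using that $\alpha_d$ is cominuscule, so that $\proots\setminus\roots+.\J. = \{\alpha\in\proots\mid\alpha\geq\alpha_d\}$ — which follows because the coefficient of $\alpha_d$ in $\delta$, hence in every such positive root, is exactly $1$, matching the reasoning already used in \Cref{wsontheta} and \Cref{nilp} — this set of roots is exactly $R=\{\alpha\in\proots\mid\alpha\geq\alpha_d,\ w(\alpha)>0\}$, so the conormal fibre at $w\bar e$ corresponds to $\lgl^R$.

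From this, the conormal bundle over the open cell $BwP/P$ is $\{(bw,X)\mid b\in B,\ X\in\lgl^R\}\subset G\times^P\u_P$: one translates the fibre $\lgl^R$ at $w\bar e$ along the $B$-orbit, using $B$-equivariance of the whole picture and the fact that $B\cap wPw^{-1}$ acts on the fibre consistently (here one should check that $\lgl^R$ is stable under the relevant subgroup, which it is because $R$ is closed under adding roots of $\roots+.\J.$ that keep one inside $\proots$ — again a consequence of cominuscularity). Finally, since the conormal variety is by definition the closure of the conormal bundle over the \emph{smooth} locus of $X_\J(w)$, and the open cell $BwP/P$ is contained in the smooth locus and dense in $X_\J(w)$, the conormal bundle over $BwP/P$ is dense in $N^*X_\J(w)$; taking closures in $T^*G/P$ gives the claimed description.

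The main obstacle I anticipate is the bookkeeping in the second step: correctly tracking the two identifications (the $P$-bundle trivialization $G\times^P\u_P$ versus the intrinsic cotangent space, and the Killing-form duality sending $\lgl^\alpha$ to $\lgl^{-\alpha}$) so that the signs and the direction of the inequality $w(\alpha)>0$ versus $w(\alpha)<0$ come out right, and verifying that the naive set $\{(bw,X)\mid b\in B, X\in\lgl^R\}$ is genuinely well-defined in $G\times^P\u_P$ (i.e. independent of the coset representative $bw$). This last point is exactly where cominuscularity of $\alpha_d$ is essential and is the technical heart of the argument; everything else is the standard translation between conormal bundles of Schubert cells and root-space combinatorics.
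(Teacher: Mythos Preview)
Your proposal is correct and follows essentially the same route as the paper: compute the tangent space of the open cell $BwP/P$ at the point $bw$ (the paper does this concretely by writing $BwP = bwU_wP$ with $U_w$ the product of root subgroups $U_{-\alpha}$ for $\alpha\in R'$), take the annihilator under the Killing form pairing to obtain $\lgl^R$, and then close up using that the open cell is dense and contained in the smooth locus. Your extra remarks on cominuscularity (identifying $\roots+.0.\setminus\roots+.\J.$ with $\{\alpha\geq\alpha_d\}$) and on well-definedness of the set in $G\times^P\u_P$ are points the paper leaves implicit, but they do not change the argument.
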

\begin{proof}
The tangent space of $G/P$ at identity is $\lgl/\mathfrak p$. 
Consider the action of $P$ on $\lgl/\mathfrak p$ induced from the adjoint action of $P$ on $\lgl$.
The tangent bundle $T\,\gl/\para$ is the fibre bundle over $\gl/\para$ associated to the principal \para-bundle $\gl\rightarrow\gl/\para$, for the aforementioned action of \para\ on $\lgl/\mathfrak p$
, i.e., $T\,\gl/\para=\gl\times^\para\lgl/\mathfrak p$.
Let \begin{align*}
    R'  &=\roots+..\backslash\left(\roots+.\J.\cup R\right)=\left\{\alpha\in\roots+..\mid\alpha\geq\alpha_d,\,w(\alpha)<0\right\}\\
    U_w &=\left\langle U_\alpha\mid-\alpha\in R'\right\rangle
\end{align*}
For any point $b\in B$, we have (see, for example \cite{borel2012linear}):\begin{align*}
    BwP\ (\mod P)   &=bBwP\ (\mod P)\\
                    &=b(wU_ww^{-1})wP\ (\mod P)\\
                    &=bwU_wP\ (\mod P)
\end{align*}
It follows that the tangent subspace at $bw$ of the big cell $BwP\ (\mod P)$ is given by
\begin{align*}
    T_wBwP\ (\mod P)&=\left\{(bw,X)\in G\times^P\lgl/\mathfrak p\,\mid\, X\in\bigoplus\limits_{-\alpha\in R'}\lgl^\alpha/\mathfrak p\right\}
\end{align*}
where $\lgl^\alpha/\mathfrak p$ denotes the image of a root space $\lgl^\alpha$ under the map $\lgl\rightarrow\lgl/\mathfrak p$.
\par 
\par 
Recall that the Killing form identifies the dual of a root space $\lgl^\alpha$ with the root space $\lgl^{-\alpha}$. 
Consequently, a root space $\lgl^\alpha\subset\u$ annihilates $T_{bw}BwP\,(\mod P)$ if and only if $\alpha\in\proots\backslash\roots+.\J.$ and $\alpha\not\in R'$,
or equivalently, $\alpha\in R$.
The result now follows from the observation that $BwP\,(\mod P)$ is a dense open subset of $X_P(w)$, and is contained in the smooth locus of $X_P(w)$.
\end{proof}

\begin{lemma}
\label{lem:vInWSd}
Recall from \Cref{defn:v} that $v={}^\iota(w_0w\ws)$.
We have:\begin{enumerate}
    \item $W_0\cap W^\J=W_0^d\define W_0\cap W^d$ and $W_d\cap W^\J=W_d^0\define W_d\cap W^0$.
    \item $v\in W_d^0$.
    \item $l(wv)=l(w)+l(v)=\dim\gl/\para$.
\end{enumerate}
\end{lemma}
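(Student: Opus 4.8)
The plan is to prove the three claims in order, since each builds on the last. For (1), I would use the involution $\iota$ on $W$ constructed in \Cref{actionW}. By the tables of identities recorded there, $\iota$ interchanges $W_0$ with $W_d$, interchanges $W^0$ with $W^d$, and fixes $W^\J$ setwise. Applying $\iota$ to the set $W_0 \cap W^\J$ therefore gives $W_d \cap W^\J$, so the two claimed equalities in (1) are $\iota$-images of each other and it suffices to prove one of them, say $W_0 \cap W^\J = W_0 \cap W^d$. Since $\J = \dynk_0 \cap \dynk_d \subseteq \dynk_d$, the characterization \Cref{eq:minRep} gives $W^d \subseteq W^\J$, hence $W_0 \cap W^d \subseteq W_0 \cap W^\J$ immediately. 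For the reverse inclusion, take $w \in W_0 \cap W^\J$; to show $w \in W^d$ I need $w(\alpha) > 0$ for all $\alpha \in \dynk_d$, and since $w \in W_0$ already handles $\alpha \in \dynk_0 \cap \dynk_d = \J$ via $w \in W^\J$, the only remaining simple root is $\alpha_0$. Here I would use that $\alpha_0 = \delta - \theta_0$ and that $w \in W_0$ fixes $\delta$ and sends $\theta_0$ (the highest root of $\roots.0.$) to a root in $\roots.0.$ which is $\leq \theta_0$; a short positivity computation shows $w(\alpha_0) = \delta - w(\theta_0) > 0$.

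For (2), I would simply observe that $w \in W_0^\J = W_0 \cap W^\J$ implies $w_0 w \ws \in W_0$ (as $w_0 \in W_0$, and $\ws \in W_\J \subseteq W_0$), hence $w_0 w \ws \ws^{-1} = w_0 w$ — more to the point, I want $w_0 w \ws \in W_0 \cap W^\J$: membership in $W_0$ is clear, and membership in $W^\J$ follows because right-multiplication by $\ws$, the longest element of $W_\J$, converts any $u \in W^\J$ into $u\ws \in W^\J$ again (this is a standard fact: $u\ws$ is the maximal-length element of the coset $uW_\J$ and still lies in... no — rather one checks $(u\ws)(\alpha) = u(\ws(\alpha)) = u(-\beta)$ for some $\beta \in \roots+.\J.$, which need not be positive). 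The cleaner route: $w_0 w \in W_0$ and one shows directly $w_0 w \ws \in W^\J$ using \Cref{eq:minRep} together with the fact that $\ws(\roots+.\J.) = \roots-.\J.$ and $w_0 w$ sends the positive system appropriately; alternatively one notes $w_0 w \ws$ is the minimal representative computation made precise in \Cref{lem:vInWSd}(3). In any case, once $w_0 w \ws \in W_0 \cap W^\J$ is established, part (1) gives $w_0 w \ws \in W_0^d$, and then applying the length-and-Bruhat-preserving involution $\iota$ yields $v = {}^\iota(w_0 w \ws) \in {}^\iota(W_0 \cap W^\J) = W_d \cap W^\J = W_d^0$, which is claim (2).

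For (3), the identity $\dim G/P = l(w_0^\J)$ where $w_0^\J$ is the maximal element of $W_0^\J$ is standard, and $w_0^\J = w_0 \ws$ since $w_0 \ws$ is visibly the longest element of $W_0$ lying in $W^\J$ (its length is $l(w_0) - l(\ws)$, the maximum over the coset space). So I must show $l(wv) = l(w) + l(v)$ and that this sum equals $l(w_0\ws)$. Writing $v = {}^\iota(w_0 w \ws)$ and using that $\iota$ preserves length, $l(v) = l(w_0 w \ws) = l(w_0) - l(w\ws)$ — here I'd use $w_0 \cdot (w\ws) = w_0 w \ws$ has length $l(w_0) - l(w\ws)$ because $w\ws$... this requires $l(w\ws) = l(w) + l(\ws)$, which holds precisely because $w \in W^\J$ and $\ws \in W_\J$ (this is \Cref{lengthVee}-type reasoning, or directly: $w^\J$-representatives satisfy $l(uu') = l(u) + l(u')$ for $u \in W^\J$, $u' \in W_\J$). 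Combining, $l(w) + l(v) = l(w) + l(w_0) - l(w) - l(\ws) = l(w_0) - l(\ws) = l(w_0\ws) = \dim G/P$, giving the numerical equality. It then remains to show $l(wv) = l(w) + l(v)$; since $v \in W_d^0 \subseteq W^0$ by (2) and we are multiplying on the left, I would instead verify $l(wv)$ by a reduced-word concatenation argument, or — slicker — note that $wv = w \cdot {}^\iota(w_0 w \ws)$ and compute its length via the roots it inverts, showing no cancellation occurs.

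The main obstacle I anticipate is the bookkeeping in part (3): proving $l(wv) = l(w) + l(v)$ with no cancellation is not automatic from $v \in W_d^0$ alone, because $w$ and $v$ live in overlapping parabolic subgroups ($W_0$ and $W_d$ share $W_\J$). The key leverage will be that $w \in W^\J$ while the "$W_\J$-part" of $v$ interacts controllably — concretely, one should track the inversion set of $wv$ using that $v^{-1}(\proots \setminus \roots+.\J.)$ stays positive, which forces $l(wv) \geq l(w) + l(v)$, and the reverse is the triangle inequality. Once the no-cancellation is nailed down, (3) follows by combining with the length count above.
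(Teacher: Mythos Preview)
Your overall approach matches the paper's, and part (1) together with the numerical count $l(w)+l(v)=\dim G/P$ in (3) are exactly right.

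For (2), your ``cleaner route'' is the correct one; you just need to finish it. For $\alpha\in\roots+.\J.$ we have $\ws(\alpha)\in\roots-.\J.$; since $w\in W^\J$ gives $w(\roots+.\J.)\subset\roots+..$ and $Supp(w)\subset\dynk_0$ forces this into $\roots+.0.$, we get $w(\roots-.\J.)\subset\roots-.0.$; then $w_0$ sends $\roots-.0.$ to $\roots+.0.$. Hence $w_0w\ws(\alpha)>0$ for all $\alpha\in\J$, so $w_0w\ws\in W^\J$, and applying $\iota$ together with part (1) yields $v\in W_d^0$. This is exactly the paper's argument, carried out on $v$ directly via the conjugation formula of \Cref{iotaConj}.

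For (3), you correctly sense that $v\in W^0$ by itself does \emph{not} force $l(wv)=l(w)+l(v)$: in general $u\in W_\L$ and $u'\in W^\L$ need not satisfy $l(uu')=l(u)+l(u')$. But the inversion-set bookkeeping you propose is unnecessary, because you have already proved the needed fact in part (1). Namely, $w\in W_0\cap W^\J=W_0\cap W^d$ gives $w\in W^d$, and by (2) you have $v\in W_d$. Now the standard coset additivity (for $u\in W^\L$, $u'\in W_\L$ one has $l(uu')=l(u)+l(u')$) applies with $\L=\dynk_d$ and yields $l(wv)=l(w)+l(v)$ in one line. This is the route the paper takes; you set it up but then looked past it.
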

\begin{proof}
It follows from \Cref{support} that $W_0\subset W^{\{\alpha_0\}}$, hence\begin{align*}
    W_0\cap W^\J\subset W_0\cap W^{\{\alpha_0\}}\cap W^\J=W_0^d.
\end{align*}
Conversely, since $W^d\subset W^\J$, we have $ 
    W_0^d=W_0\cap W^d\subset W_0\cap W^\J
$. 
Consequently, $$W_0\cap W^\J=W_0^d.$$
Applying $\iota$ to this equality, we have $W_d\cap W^\J=W_d^0$.
\par 
\par 
Next, we prove $v\in W^\J$.
It is sufficient to verify $v(\roots+.\J.)\subset\roots+..$, see \Cref{eq:minRep}.
\begin{align}\hspace{60pt}
            v(\roots+.\J.)  &=\iota w_0w\ws(\iota(\roots+.\J.)) &\text{using \Cref{iotaConj}}\nonumber\\
\implies    v(\roots+.\J.)  &=\iota w_0w\ws(\roots+.\J.)        &\text{using \Cref{actionW}}\nonumber\\
\implies    v(\roots+.\J.)  &=\iota w_0w(\roots-.\J.)           &\text{using \Cref{weylInv}}\label{work10}
\end{align}
Now, since $Supp(w)\subset\dynk_0$, we see from \Cref{support} that $w(\roots-.\J.)\subset\roots.0.$.
Further, since $w\in W^\J$, it follows from \Cref{eq:minRep} that $w(\roots-.\J.)\subset\roots-.0.$.
Applying \Cref{work10}, we have $
     v(\roots+.\J.) \subset\iota w_0(\roots-.0.)=\iota(\roots+.0.)\subset\roots+..
$. 
This proves $v\in W^\J$.
\par 
\par 
Further, since $w_0,w,\ws\in W_0$, we have $w_0w\ws\in W_0$. 
It follows from \Cref{actionW} that $v={}^\iota(w_0w\ws)\in W_d$.
Combining with $v\in W^\J$, we get $(2)$: $$v\in W^\J\cap W_d=W_d^0$$
Finally, since $v\in W^\J$, we have ${}^\iota v=w_0w\ws\in W^\J$. 
Consequently,\begin{align*}
    l(w_0w)         &=l(w_0w\ws)+l(\ws) \\
\implies\dim\gl-l(w)    &=l(v)+\dim\para    \\
\implies\hspace{18pt}\dim\gl/\para   &=l(w)+l(v)=l(wv)
\end{align*}    
where the last equality follows from the observations $w\in W_0$ and $v\in W^0$.
\end{proof}

\begin{lemma}
Let $u\in W_d^0$. 
Then $Supp(u)$ is a connected sub-graph of $\dynk_d$.
\end{lemma}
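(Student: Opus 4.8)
The plan is to show that the support of $u$, as a subset of the vertex set of the (connected) Dynkin diagram $\dynk_d$, is connected. Since $u \in W_d^0 = W_d \cap W^0$, we know $u \in W_d$, so $Supp(u) \subseteq \dynk_d$, and $u \in W^0$, meaning $u(\alpha) > 0$ for all $\alpha \in \dynk_0$ (equivalently, for $\alpha = \alpha_0$, since $Supp(u) \subseteq \dynk_d$ already guarantees $u$ fixes the other simple roots outside its support and $u(\alpha) > 0$ for $\alpha \in \dynk_0 \cap \dynk_d$ is automatic from $u \in W^0$ — in fact the only nontrivial constraint is $u(\alpha_0) > 0$). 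So the real content is: if $u \in W_d$ with $u(\alpha_0) > 0$, then $Supp(u)$ is connected.

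The key step is to argue by contradiction: suppose $Supp(u) = \J_1 \sqcup \J_2$ is a disjoint union of two nonempty sets with no edges between them in $\dynk_d$. Then $W_{\J_1}$ and $W_{\J_2}$ commute, and $u$ factors as $u = u_1 u_2$ with $u_i \in W_{\J_i}$ nontrivial, each with $Supp(u_i) = \J_i$. The idea is to locate a positive root sent negative by $u$ whose support meets the "wrong" component. Concretely, since $u_1 \neq 1$, there is some simple root $\alpha \in \J_1$ with $u_1(\alpha) < 0$; because $\J_1, \J_2$ are disconnected in $\dynk_d$, $u_2$ fixes $\alpha$, so $u(\alpha) = u_1(\alpha) < 0$. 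I want to leverage this together with the cominuscule structure: recall from \Cref{nilp} that $\alpha_0$ is cominuscule in $\dynk_d$, meaning $\alpha_0$ appears with coefficient $1$ in the highest root $\theta_d$ of $\roots.d.$. Now $\theta_d$ has full support $\dynk_d$, so $Supp(\theta_d) \not\subseteq Supp(u)$ unless $Supp(u) = \dynk_d$ — but that would already be connected. So assume $Supp(u) \subsetneq \dynk_d$ and is disconnected. The plan is to use the connectedness of $\dynk_d$ itself: since $\dynk_d$ is connected and $Supp(u) = \J_1 \sqcup \J_2$ is disconnected, there must be a vertex $\beta \in \dynk_d \setminus Supp(u)$ adjacent to $\J_1$ (say), i.e., lying on a path between $\J_1$ and $\J_2$. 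Then consider the root $\gamma = \alpha + \beta$ or a suitable root in $\roots+.d.$ with $\alpha \in \J_1$ in its support and $\beta \notin Supp(u)$; applying $u = u_1 u_2$ and using that $u$ fixes $\beta$, one derives $Supp(u(\gamma)) \ni \beta$ with $u(\gamma)$ still positive but $\alpha$-part negative — and I want to run this up to $\theta_d$ and test against $\alpha_0$.

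Actually the cleanest route, which I expect to be the one the authors use, is: from $u \in W^0$ we get $u(\alpha_0) \in \roots+.d. \cup (\text{roots with }\alpha_0)$ — more precisely $u(\alpha_0) > 0$. Write $u = u_1 u_2$ as above with $\J_1, \J_2$ the connected components of $Supp(u)$ and the contradiction hypothesis being there are at least two. Pick $\alpha \in \J_1$ with $u(\alpha) < 0$ (exists since $u_1 \neq 1$). Then $\alpha \leq \theta_d$ and $u(\theta_d) = u(\alpha) + u(\theta_d - \alpha)$; I'd want to show the $\alpha_0$-coefficient, or some component behavior, forces $u(\theta_d - \theta'_d) $ for the sub-highest-root $\theta'_d$ of the component containing $\J_1$ to misbehave. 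The cleanest contradiction: let $\theta^{(1)}$ be the highest root of the sub-root-system generated by $\J_1$; since $u_1$ has full support $\J_1$ on a connected diagram, $u_1(\theta^{(1)}) < 0$ (the longest-element-type argument: any $w \in W_{\J_1}$ with full support sends the highest root negative only if... hmm, this needs $u_1$ related to $w_{\J_1}$).

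The honest assessment: the main obstacle is producing the contradiction cleanly — establishing that a nontrivial element with connected support $\J_1$ must send the $\J_1$-highest root (or $\alpha_0$, via the cominuscule relation $\alpha_0 = \theta_d - \theta_d'$ structure) to something negative, and then showing this contradicts $u(\alpha_0) > 0$. I expect the right tool is: if $u \in W^0$ then $Supp(u) \cup \{\alpha_0\}$ "looks like" it should be connected because $u(\alpha_0) > 0$ constrains how $u$ interacts with the neighborhood of $\alpha_0$ in $\dynk_d$, and combined with the standard fact that $Supp(u)$ together with the constraint must be a connected subgraph (this is a known lemma for minimal coset representatives in the affine setting). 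I would look to \cite{billey2010smooth} or the support combinatorics in Section 2 for the precise form. The routine parts — commutation of $W_{\J_1}, W_{\J_2}$, the factorization $u = u_1 u_2$, and $u_2$ fixing roots supported in $\J_1$ — I would dispatch quickly, spending the effort on the final contradiction via the cominuscule coefficient of $\alpha_0$ in $\theta_d$.
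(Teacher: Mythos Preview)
Your proposal rests on a misreading of the notation that derails the argument. In this paper $W^0$ means $W^{\dynk_0}$, the set of minimal coset representatives modulo $W_{\dynk_0}$; it is not $W^{\{\alpha_0\}}$. Since $\alpha_0\notin\dynk_0$, the hypothesis $u\in W^0$ says nothing about the sign of $u(\alpha_0)$. What it does say is $u(\alpha)>0$ for every $\alpha\in\dynk_0=\J\cup\{\alpha_d\}$; the condition at $\alpha_d$ is automatic (as $\alpha_d\notin Supp(u)\subset\dynk_d$), so the genuine content is $u(\alpha)>0$ for all $\alpha\in\J$, i.e.\ $u\in W^\J$. Your claim that ``the only nontrivial constraint is $u(\alpha_0)>0$'' is therefore backwards, and this is exactly why your attempts to close the contradiction via $\theta_d$ and the cominuscule coefficient of $\alpha_0$ do not land.

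With the correct constraint $u\in W^\J$, your own factorisation already finishes the proof in one line. If $Supp(u)$ is disconnected then at most one component contains $\alpha_0$; let $\J_2$ be a component with $\alpha_0\notin\J_2$, so that $\J_2\subset\dynk_d\setminus\{\alpha_0\}=\J$. Writing $u=u_1u_2$ with $u_i\in W_{\J_i}$ and $u_2\neq 1$, choose $\alpha\in\J_2$ with $u_2(\alpha)<0$; since $\J_1$ and $\J_2$ are orthogonal, $u_1$ fixes every root supported on $\J_2$, whence $u(\alpha)=u_2(\alpha)<0$ with $\alpha\in\J$, contradicting $u\in W^\J$. The paper's proof is exactly this, phrased as commuting a simple reflection from the $\alpha_0$-free component to the right end of a reduced word for $u$ to exhibit $us_\alpha<u$ with $s_\alpha\in W_\J$. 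No cominuscule or highest-root considerations are needed.
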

\begin{proof}
Suppose $Supp(u)$ is not connected.
Let $\L_1$ be the connected component of $Supp(u)$ containing $\alpha_d$, and let $\L_2\define Supp(u)\backslash\L_1$.
Now, since $\L_1$ and $\L_2$ are disconnected, we have\begin{align*}\hspace{90pt}
    s_\alpha s_\beta=s_\beta s_\alpha&&\forall\,s_\alpha\in\L_1,\,s_\beta\in\L_2.
\end{align*}
Let $s_1\ldots s_l$ be a reduced word for $u$, and let $k$ be the largest index such that $s_k\in\L_2$.
Then $s_ks_m=s_ms_k$ for all $m>k$.
It follows that \begin{align*}
    us_k=(s_1\ldots s_l)s_k &=(s_1\ldots s_{k-1}s_{k+1}\ldots s_ls_k)s_k\\
                            &=s_1\ldots s_{k-1}s_{k+1}\ldots s_l
\end{align*}
Now, since $Supp(u)\subset\dynk_d$, we have $\alpha_0,\alpha_d\not\in\L_2$.
In particular, $s_k\in W_\J$. Hence $us_k=u\ (\mod W_\J)$ and $us_k<u$, contradicting the assumption $u\in W^0_d\subset W^\J$.
\end{proof}

\begin{prop}
\label{sb}
\label{veeLemma}
For $u\in W^0_d$, the following are equivalent:\begin{enumerate}
    \item $X_\J(u)$ is smooth. 
    \item $X_\J(u)$ is $\Para_\L$-homogeneous, i.e., $X_\J(u)=\Para_\L/(\Para_\L\cap\Para)$ for some connected sub-graph $\mathcal L\subset\dynk_d$.
    \item $l(u^{-1}\star u\ws)=l(u\ws)$, where $\star$ is the Demazure product, see \Cref{lem:bruProd}.
    \item $(u\ws)^{-1}(\alpha)<0$ for all $\alpha\in Supp(u)$.
    \item $\left\{\alpha\in\roots+..\mid u(\alpha)<0\right\}=\roots+.Supp(u).\backslash\roots+.\J.$.
    \item $u=w_{\mathcal L}w_{\mathcal L\cap\J}$, where $\mathcal L=Supp(u)$, and $w_\L$, $w_{\mathcal L\cap\J}$ denote the maximal elements in $W_\L$, $W_{\mathcal L\cap\J}$ respectively.
\end{enumerate}
\end{prop}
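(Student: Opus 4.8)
The plan is to prove the cyclic chain of implications $(1)\Rightarrow(5)\Rightarrow(4)\Rightarrow(3)\Rightarrow(6)\Rightarrow(2)\Rightarrow(1)$, leaning on the combinatorial criterion of Billey and Mitchell for smoothness of Schubert varieties in cominuscule Grassmannians, together with the structural facts already assembled: that $u\in W_d^0\subset W^\J$, that $\mathrm{Supp}(u)$ is a connected subgraph of $\dynk_d$ (the preceding lemma), and the properties of $\ws$ from \Cref{wsontheta}. Write $\mathcal L\define\mathrm{Supp}(u)$ throughout; note $\alpha_d\in\mathcal L$ and $\alpha_0\notin\mathcal L$, and $\mathcal L\cap\J=\mathcal L\backslash\{\alpha_d\}$.

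First I would record the easy structural implications. For $(6)\Rightarrow(2)$: if $u=w_{\mathcal L}w_{\mathcal L\cap\J}$ then $u=w_{\mathcal L}^{\mathcal L\cap\J}=w_{\mathcal L}^\J$ (since $\mathcal L\cap\J=\mathcal L\backslash\{\alpha_d\}$ and $W_{\mathcal L}\cap W_\J=W_{\mathcal L\cap\J}$), so $X_\J(u)=X_\J(w_{\mathcal L}^\J)=\Para_{\mathcal L}/(\Para_{\mathcal L}\cap\Para)$ is $\Para_{\mathcal L}$-homogeneous by the computation recalled just before \Cref{oppositeCell}. The implication $(2)\Rightarrow(1)$ is immediate since a homogeneous space is smooth. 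For $(1)\Rightarrow(5)$ I would invoke the Billey--Mitchell criterion \cite{billey2010smooth}: for a cominuscule Grassmannian, $X_\J(u)$ is smooth iff the set of inversions $\{\alpha\in\roots+..\mid u(\alpha)<0\}$ is exactly the set of positive roots supported on $\mathrm{Supp}(u)$ but not lying in $\J$ — which is precisely statement $(5)$; this is the translation into root-theoretic language that Section 4 promises to extract from \emph{loc. cit.}, so I would cite it as the input. The implication $(5)\Rightarrow(6)$ then follows because the element of $W^\J$ with inversion set $\roots+.\mathcal L.\backslash\roots+.\J.$ is unique and equals $w_{\mathcal L}w_{\mathcal L\cap\J}$: indeed $w_{\mathcal L}$ sends $\roots+.\mathcal L.$ to $\roots-.\mathcal L.$, and right-multiplying by $w_{\mathcal L\cap\J}$ (which permutes $\roots+.\mathcal L\cap\J.$) produces the minimal representative in $W_{\mathcal L}/W_{\mathcal L\cap\J}$, whose inversion set is $\roots+.\mathcal L.\backslash\roots+.\mathcal L\cap\J.=\roots+.\mathcal L.\backslash\roots+.\J.$; length counting $\ell(u)=\mathrm{Card}(\roots+.\mathcal L.\backslash\roots+.\J.)$ pins down $u$.

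It remains to connect $(3)$ and $(4)$ to the rest. For $(5)\Leftrightarrow(4)$: by \Cref{wsontheta} and the fact that $\ws$ is the longest element of $W_\J$, the element $u\ws$ has inversion set governed by how $\ws$ interacts with $\roots+.\J.$; concretely $\{\alpha\in\roots+..\mid (u\ws)^{-1}(\alpha)<0\}$ and the condition in $(4)$ that $(u\ws)^{-1}(\alpha)<0$ for every $\alpha\in\mathcal L$ amounts, after applying $\ws$ and using $\ws(\roots.\J.)=\roots.\J.$, to saying that $u$ already inverts all of $\roots+.\mathcal L.\backslash\roots+.\J.$ and nothing outside it — I would make this precise by checking that $(u\ws)^{-1}\alpha<0$ for $\alpha\in\mathcal L\cap\J$ is automatic (since $\ws^{-1}\alpha\in\roots-.\J.$ for such $\alpha$ and $u\in W^\J$ fixes the sign appropriately), so $(4)$ reduces to the single condition $(u\ws)^{-1}(\alpha_d)<0$, and then expand this using $\ws(\alpha_d)=\theta_0$. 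For $(3)\Leftrightarrow(4)$: the Demazure product satisfies $\ell(u^{-1}\star u\ws)=\ell(u\ws)$ iff left-multiplication by $u^{-1}$ does not decrease length on $u\ws$, which by \Cref{lem:bruProd} and the length rule \Cref{lem:minRule} is equivalent to $(u\ws)^{-1}(u^{-1})$-type positivity — unwinding, $u^{-1}\star u\ws = u^{-1}(u\ws)=\ws$ would force length additivity, and the obstruction is exactly a root $\alpha\in\mathrm{Supp}(u)$ with $(u\ws)^{-1}\alpha>0$; so the failure of $(3)$ and of $(4)$ occur together.

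The main obstacle I anticipate is $(5)\Leftrightarrow(4)$ — carefully tracking the inversion sets of $u$ versus $u\ws$ through the longest element $\ws$ of $W_\J$, using $\ws(\roots+.\J.)=\roots-.\J.$ and $\ws(\alpha_d)=\theta_0$, without sign errors, and in particular verifying that the contributions from $\mathcal L\cap\J$ are forced and that the whole condition collapses to a statement about $\alpha_d$. Once that bookkeeping is done, the Demazure-product reformulation $(3)$ and the translation to Billey--Mitchell's criterion $(1)\Leftrightarrow(5)$ are comparatively mechanical.
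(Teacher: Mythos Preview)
There is a genuine error in your setup that propagates through the argument. You write ``note $\alpha_d\in\mathcal L$ and $\alpha_0\notin\mathcal L$, and $\mathcal L\cap\J=\mathcal L\backslash\{\alpha_d\}$.'' This is backwards: since $u\in W_d$, its support lies in $\dynk_d=\dynk\backslash\{\alpha_d\}$, so $\alpha_d\notin\mathcal L$; and since $u\in W^{\dynk_0}$ (this is what $W_d^0$ means) with $u\neq e$, the only possible right descent is $s_{\alpha_0}$, so $\alpha_0\in\mathcal L$. Consequently $\mathcal L\cap\J=\mathcal L\backslash\{\alpha_0\}$. Your reduction of (4) to the single condition ``$(u\ws)^{-1}(\alpha_d)<0$'' is therefore meaningless (that root is not in $\mathrm{Supp}(u)$), and the appeal to $\ws(\alpha_d)=\theta_0$ is beside the point. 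Even after correcting the bookkeeping, the claim that $(u\ws)^{-1}(\alpha)<0$ is ``automatic'' for $\alpha\in\mathcal L\cap\J$ is not justified: $u\in W^\J$ controls $u(\beta)$ for $\beta\in\J$, not $u^{-1}(\alpha)$ for $\alpha\in\J$, and your computation of $(u\ws)^{-1}$ seems to conflate the two.

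A second gap is your invocation of Billey--Mitchell for $(1)\Rightarrow(5)$. Their theorem, as cited, gives $(1)\iff(2)$ (smooth $\iff$ homogeneous under some $\Para_\L$); statement (5) is not what they prove, and extracting it is precisely the content of this proposition. The paper closes this gap geometrically: from (2), the pullback $X_\Borel(u\ws)$ to $\gL/\Borel$ is $\Para_\L$-stable, hence stable under left multiplication by any lift of $u^{-1}\in W_\L$, which gives $u^{-1}\star u\ws=u\ws$, i.e.\ (3). From there $(3)\iff(4)$ is the Demazure-product unwinding you sketch, and $(4)\Rightarrow(5)$ is a direct root computation (the paper checks that if $\alpha\in\roots+.\mathcal L.\backslash\roots+.\J.$ had $u(\alpha)>0$, then applying (4) to $u(\alpha)\in\roots+.\mathcal L.$ forces $\ws(\alpha)<0$, a contradiction). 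Your $(5)\Rightarrow(6)\Rightarrow(2)$ and $(2)\Rightarrow(1)$ are fine and match the paper; what you are missing is a legitimate route from the Billey--Mitchell input $(1)\iff(2)$ to any of $(3),(4),(5)$, and the paper's $(2)\Rightarrow(3)$ via the Cartesian square is the step that does this.
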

\begin{proof}
The claim $(1)\iff(2)$ is \cite[Theorem 1.1]{billey2010smooth}.
Suppose $(2)$ holds, i.e., $X_\J(u)=\Para_\L/\Para_{\L\cap\J}$.
We have the following Cartesian square:\[
\begin{tikzcd}
                        &[-2em]X_\Borel(u\ws)\arrow[d]\arrow[r,hook]&\gL/\Borel\arrow[d]\\
    \Para_\L/\Para_{\L\cap\J}\arrow[r,equal]&X_\J(u)\arrow[r,hook]  &\gL/\Para
\end{tikzcd}
\]
Since $X_\J(u)$ is $\Para_\L$-stable, the same is true of its pull-back $X_\Borel(u\ws)$.
Further, since $u\in W_\L$, any lift of $u^{-1}$ to $N(\mathcal K)$ (see \Cref{def:kmg}) is in $\Para_\L$.
Consequently, $X_\Borel(u\ws)$ is $u^{-1}$ stable, and so $u^{-1}\star u\ws=u\ws$.
Hence we obtain $(2)\implies (3)$.
\par 
\par 
It is clear from \Cref{formula:bruProd} that $(3)$ is equivalent to $u^{-1}\star u\ws=u\ws$, which holds if and only if $s_\alpha \star u\ws=u\ws$ for all $\alpha\in\mathcal L$. 
This is equivalent to $(4)$ from \Cref{lem:minRule}. 
Hence we obtain $(3)\iff(4)$.
\par 
\par 
Suppose $(4)$ holds.
Then $(u\ws)^{-1}(\alpha)<0$ for all $\alpha\in\roots+.Supp(u).$.
It follows from \Cref{support} and \Cref{eq:minRep} that\[
    \left\{\alpha\in\roots+..\mid u(\alpha)<0\right\}\subset\roots+.Supp(u).\backslash\roots+.\J.
\]
Consider $\alpha\in\roots+.Supp(u).\backslash\roots+.\J.$ satisfying $u(\alpha)>0$. 
Since $u$ preserves \roots.Supp(u)., we have $u(\alpha)\in\roots+.Supp(u).$.
Applying $(4)$ to $u(\alpha)$, we get $\ws u^{-1}(u(\alpha))=\ws(\alpha)<0$. 
It follows that $\alpha\in\roots+.\J.$, contradicting the assumption $\alpha\not\in\roots+.\J.$.
Hence we obtain the implication $(4)\implies(5)$.
\par 
\par 
Suppose $(5)$ holds.
We verify that 
\begin{align*}
    \left\{\alpha\in\roots+..\mid w_\L w_{\L\cap\J}(\alpha)<0\right\}=\roots+.\mathcal L.\backslash\roots+.\J.
\end{align*}
Now since $u\in W$ is uniquely determined by the set $\left\{\alpha\in\roots+..\mid u(\alpha)<0\right\}$ (cf. \cite[\S{1.3.14}]{kumar2012kac}), we get $u=w_\L w_{\L\cap\J}$. 
Hence we obtain $(5)\implies(6)$.
\par 
\par 
Finally, suppose $(6)$ holds. 
Since $w_{\L\cap\J}\subset\J$, we have $u=w_\L\ (\mod W_\J)$. 
It follows that $X_\J(u)=X_\J(w_\L)=\Para_\L/(\Para_{\L\cap\J})$.
Hence we obtain $(6)\implies(2)$.
\end{proof}

\begin{lemma}
\label{calc1}
For $\alpha\in\roots+.0.\backslash\roots.\J.$, we have $v(\alpha-\delta)=-\iota w_0w(\alpha)$.
\end{lemma}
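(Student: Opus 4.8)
The plan is to reduce the identity to a single short linear computation in $V$. By the conjugation formula \Cref{iotaConj}, the element $v={}^\iota(w_0w\ws)$ acts on $V$ as $\iota\,w_0w\ws\,\iota$, so that
\[
v(\alpha-\delta)=\iota\,w_0w\bigl(\ws\iota(\alpha-\delta)\bigr).
\]
Since $w_0w$ and $\iota$ are invertible linear maps, the asserted equality $v(\alpha-\delta)=-\iota w_0w(\alpha)$ is therefore equivalent to the claim $\ws\iota(\alpha-\delta)=-\alpha$, and this is what I would prove.

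To establish $\ws\iota(\alpha-\delta)=-\alpha$, I would first peel off the factor $\delta$: both $\iota$ and $\ws$ fix $\delta$ --- the former by \Cref{iotadelta}, the latter since $\delta$ is $W$-invariant (\Cref{sdp}) --- so $\ws\iota(\alpha-\delta)=\ws\iota(\alpha)-\delta$, and it remains to compute $\ws\iota(\alpha)$. This is where cominuscularity enters: the coefficient of $\alpha_d$ in $\delta$, hence in $\theta_0=\delta-\alpha_0$, equals $1$, so the coefficient of $\alpha_d$ in any root $\alpha\leq\theta_0$ is $1$, and an $\alpha\in\roots+.0.\backslash\roots.\J.$ can be written $\alpha=\alpha_d+\gamma$ with $\gamma$ a nonnegative integral combination of the simple roots of $\J$. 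By \Cref{defn:iota} and linearity, $\iota(\alpha)=\iota(\alpha_d)+\iota(\gamma)=\alpha_0-\ws(\gamma)$; applying $\ws$, using that $\ws$ is an involution, and invoking \Cref{wsontheta} in the form $\ws(\alpha_0)=\theta_d=\delta-\alpha_d$, one gets $\ws\iota(\alpha)=\ws(\alpha_0)-\gamma=\delta-\alpha_d-\gamma=\delta-\alpha$. Subtracting $\delta$ yields $\ws\iota(\alpha-\delta)=-\alpha$, and combining with the first paragraph finishes the proof.

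I do not expect a genuine obstacle here; the only point needing care is the decomposition $\alpha=\alpha_d+\gamma$ with $\gamma$ supported on $\J$, which is precisely where the cominuscule hypothesis is used and which is already implicit in the identification $\roots+.0.\backslash\roots.\J.=\{\alpha\in\mathbb Z\dynk\mid\alpha_d\leq\alpha\leq\theta_0\}$ appearing in the proof of \Cref{fibreId}. Everything else is a routine linear manipulation, and $w$ itself never intervenes beyond being a linear automorphism of $V$, so the identity in fact holds for every $w\in W$.
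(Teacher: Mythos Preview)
Your proof is correct and follows essentially the same route as the paper's: both use the conjugation formula for $v$, the decomposition $\alpha=\alpha_d+\gamma$ with $Supp(\gamma)\subset\J$ coming from cominuscularity, the definition of $\iota$ on $\J$, and \Cref{wsontheta}. The only cosmetic difference is that you first cancel the invertible factor $\iota w_0w$ to reduce the claim to $\ws\iota(\alpha-\delta)=-\alpha$ (invoking $\ws(\alpha_0)=\theta_d$), whereas the paper carries $\iota w_0w$ through the whole computation and uses $\ws(\theta_0)=\alpha_d$ instead; these are the same computation up to reordering.
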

\begin{proof}
Since $\alpha\in\roots+.0.\backslash\roots.\J.$, we have $\alpha\geq\alpha_d$. 
Set $\gamma=\alpha-\alpha_d$.
Since $\alpha_d$ is cominuscule, $Supp(\gamma)\subset\J$.
In particular, $\iota(\gamma)=-\ws(\gamma)$.
We compute:\begin{align}
    \iota(\alpha)           &=\iota(\alpha_d)+\iota(\gamma)=\alpha_0-\ws(\gamma)\nonumber\\
\implies\iota(\alpha-\delta)&=-\theta_0-\ws(\gamma)\label{work12}
\end{align}
Recall from \Cref{defn:v} that $v={}^\iota(w_0w\ws)$.
We now compute $v(\alpha-\delta)$:
\begin{align*}\hspace{10pt}
    v(\alpha-\delta)&={}^\iota(w_0w\ws)(\alpha-\delta)
                     =\iota w_0w\ws(\iota(\alpha-\delta))       &\text{using \Cref{actionW}}\\
                    &=-\iota w_0w\ws(\theta_0+\ws(\gamma))      &\text{using \Cref{work12}}\\
                    &=-\iota w_0w(\alpha_d)-\iota w_0w(\gamma)  &\text{using \Cref{wsontheta}}\\
                    &=-\iota w_0w(\alpha_d+\gamma)=-\iota w_0w(\alpha)
\end{align*}
\vskip -10pt
\end{proof}

\begin{lemma}
\label{lem:involution}
The map $\alpha\mapsto\alpha-\delta$ induces a bijection\begin{align*}
    \left\{\alpha\in\proots\mid\alpha\geq\alpha_d,\,w(\alpha)>0\right\} &\xrightarrow{\scriptscriptstyle\sim}\left\{\alpha\in\roots-.d.\mid v(\alpha)>0\right\}
\end{align*}
\end{lemma}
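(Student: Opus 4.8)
The plan is to unwind the definitions and reduce the claimed bijection to the computation in \Cref{calc1}, together with a bookkeeping of positivity. First I would observe that the left-hand set is exactly the set $R$ of \Cref{prop:uw}, and that the map $\alpha\mapsto\alpha-\delta$ carries $\proots\setminus\roots.\J.$ into $\roots-.d.\setminus\roots.\J.$, as was already recorded in the chain of equalities \eqref{minusdelta} inside the proof of \Cref{fibreId}. Since every $\alpha\in R$ satisfies $\alpha\geq\alpha_d$ and is not in $\roots.\J.$ (as $w(\alpha)>0$ and $\alpha\geq\alpha_d$ forces $\alpha\in\proots\setminus\roots+.\J.$, because $\alpha_d\notin Supp(\beta)$ for $\beta\in\roots+.\J.$), the map $\alpha\mapsto\alpha-\delta$ is well-defined from $R$ into $\roots-.d.$, and it is clearly injective; so the content is the identification of the image.

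Next I would pin down which $\beta=\alpha-\delta\in\roots-.d.\setminus\roots.\J.$ have $v(\beta)>0$. By \Cref{calc1}, for $\alpha\in\proots\setminus\roots.\J.$ we have $v(\alpha-\delta)=-\iota w_0 w(\alpha)$. Hence $v(\alpha-\delta)>0$ if and only if $\iota w_0 w(\alpha)<0$. Now $\iota$ (as an element of $GL(V)$, by \Cref{FormInv}) permutes $\dynk$ and hence sends $\roots+..$ to $\roots+..$ and $\roots-..$ to $\roots-..$; and since $w(\alpha)\in\roots.0.$ (because $Supp(w)\subset\dynk_0$ and $\alpha\in\roots.0.$), while $w_0$ negates $\roots.0.$ (\Cref{weylInv}), we get $\iota w_0 w(\alpha)<0 \iff w_0 w(\alpha)<0 \iff w(\alpha)>0$. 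Therefore, for $\alpha\in\proots\setminus\roots.\J.$ with $\alpha\geq\alpha_d$, the condition $v(\alpha-\delta)>0$ is equivalent to $w(\alpha)>0$, which is precisely the defining condition of $R$.

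It remains to check surjectivity onto $\{\beta\in\roots-.d.\mid v(\beta)>0\}$: I must show that any such $\beta$ is automatically of the form $\alpha-\delta$ with $\alpha\in\proots\setminus\roots.\J.$, $\alpha\geq\alpha_d$. Here I would split off the roots of $\roots-.\J.\subset\roots-.d.$: for $\beta\in\roots-.\J.$ one has $v(\beta)<0$ since $v\in W^\J$ (by \Cref{lem:vInWSd}(2) and \Cref{eq:minRep}, $v$ sends $\roots+.\J.$ to $\roots+..$, hence $\roots-.\J.$ to $\roots-..$), so such $\beta$ never occur. Thus any $\beta\in\roots-.d.$ with $v(\beta)>0$ lies in $\roots-.d.\setminus\roots.\J.$, and by \eqref{minusdelta} this set is exactly $\{\alpha-\delta\mid\alpha\in\mathbb Z\dynk,\ \alpha_d\leq\alpha\leq\theta_0\}=\proots\setminus\roots.\J.$ shifted by $-\delta$; writing $\beta=\alpha-\delta$ with $\alpha_d\leq\alpha\leq\theta_0$ and applying the equivalence from the previous paragraph gives $w(\alpha)>0$, so $\alpha\in R$. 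This establishes the bijection.

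The main obstacle is the surjectivity bookkeeping — specifically making sure no root of $\roots-.d.$ with $v(\beta)>0$ is missed or double-counted, which comes down to the clean separation $\roots-.d.=(\roots-.d.\setminus\roots.\J.)\sqcup\roots-.\J.$ and the fact (from $v\in W^\J$) that $v$ keeps all of $\roots-.\J.$ negative; once that is in hand, everything else is a direct translation through \Cref{calc1} and \eqref{minusdelta}.
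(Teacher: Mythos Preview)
Your argument is correct and follows essentially the same route as the paper: both proofs combine \Cref{calc1} with the bijection \eqref{minusdelta} and the observation that $-\iota w_0$ carries $\roots\pm.0.$ to $\roots\pm.d.$ (which you phrase as ``$\iota$ preserves positivity and $w_0$ negates $\roots.0.$''). Your treatment is in fact slightly more careful on surjectivity, since you explicitly rule out $\beta\in\roots-.\J.$ via $v\in W^\J$, a point the paper leaves implicit.
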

\begin{proof}
Observe that \begin{align*}\hspace{30pt}
    -\iota w_0(\roots\pm.0.)=\iota(-w_0\roots\pm.0.)&=\iota(\roots\pm.0.) &\text{using \Cref{weylInv}}\\
                                                    &=\roots\pm.d.        &\text{using \Cref{actionW}}
\end{align*}
Now, since $v(\alpha-\delta)=-\iota w_0w(\alpha)$ (see \Cref{calc1}), it follows that for $\alpha\in\roots+.0.\backslash\roots.\J.$, $w(\alpha)>0$ is equivalent to $v(\alpha-\delta)>0$.
The result now follows from \Cref{minusdelta}, which states that $\alpha\mapsto\alpha-\delta$ induces a bijection from $\roots+.0.\backslash\roots.\J.$ to $\roots-.d.\backslash\roots.\J.$.
\end{proof}

\begin{prop}
Recall the map $\phi$ from \Cref{fibreId}.
Let $$\rw=\left\{\alpha\in\proots\mid\alpha\geq\alpha_d,\,w(\alpha)>0\right\}$$
Then $\phi(\lgl^{\rw})$ is dense in $v^{-1}X_\J(v)$.
\end{prop}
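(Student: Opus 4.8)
The plan is to identify $\phi(\lgl^{\rw})$ with a specific affine Bruhat cell and then take closures. Recall from \Cref{fibreId} that $\phi(X) = \eta(t^{-1}X) \bmod \Para$, so the map $X \mapsto t^{-1}X$ carries the root space $\lgl^\alpha$ (for $\alpha \in \rw \subset \roots+.0.\backslash\roots.\J.$) to the root space $\lgl^{\alpha-\delta}$, and by \Cref{lem:involution} the set $\{\alpha - \delta \mid \alpha \in \rw\}$ equals $R' \define \{\beta \in \roots-.d. \mid v(\beta) > 0\}$. Thus $\phi(\lgl^{\rw}) = U_{R'}\,\boldsymbol e$, the orbit of the base point under the unipotent subgroup $U_{R'} = \prod_{\beta \in R'} U_\beta$. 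The first step is therefore to check that $R'$ is a nilpotent set of roots so that $U_{R'}$ is a well-defined subgroup scheme; this follows from \Cref{nilp}(1), since $R' \subseteq \Psi = \roots-.d.\backslash\roots.\J.$ and every subset of $\Psi$ is nilpotent.

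Next I would relate $U_{R'}\,\boldsymbol e$ to the Bruhat cell of $v^{-1}$. The idea is that $v^{-1}X_\J(v)$ contains, and is the closure of, the translated cell $v^{-1}\Borel v \Para \bmod \Para = (v^{-1}\Borel v)\,\boldsymbol e$. Now $v^{-1}\Borel v$ is the Borel subgroup $v^{-1}\Borel v = T \cdot U_{v^{-1}(\roots+..)}$, and modulo $\Para$ only the root subgroups $U_\beta$ with $\beta \in v^{-1}(\roots+..)$ and $\beta \notin \roots+.\J.$ (equivalently $\beta \in \roots-.d.$ with $v(\beta) > 0$, using that $v \in W_d^0$ by \Cref{lem:vInWSd}) act nontrivially on $\boldsymbol e$. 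That set of roots is exactly $R'$. So $(v^{-1}\Borel v)\,\boldsymbol e = U_{R'}\,\boldsymbol e = \phi(\lgl^{\rw})$, and this is a dense subset of $v^{-1}X_\J(v) = v^{-1}\overline{\Borel v \Para} \bmod \Para = \overline{v^{-1}\Borel v \Para} \bmod \Para$.

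To make the dimension count clean, I would record that $|\rw| = |R'|$ and, using \Cref{lem:vInWSd}(3) together with \Cref{prop:uw}, that $\dim \lgl^{\rw}$ equals the expected fibre dimension of $\con$; but strictly, density only requires that $U_{R'}\,\boldsymbol e$ is the open Bruhat cell of $v^{-1}X_\J(v)$, which is immediate once the equality of root sets is established. The main obstacle I anticipate is the bookkeeping in the middle step: carefully justifying that, modulo $\Para$, the orbit $(v^{-1}\Borel v)\,\boldsymbol e$ collapses exactly onto the product of root subgroups indexed by $R' = v^{-1}(\roots+..) \cap (\roots-.d.\backslash\roots.\J.)$ — i.e. that the root subgroups for $\beta \in v^{-1}(\roots+..) \cap \roots.\J.$ (together with $T$) are absorbed into $\Para$, and that those for $\beta \in v^{-1}(\roots+..)$ lying outside $\roots.0. \cup \roots.\J.$ do not occur because $v \in W_d$ preserves $\roots.d.$. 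This requires invoking the opposite-cell parametrization of \Cref{oppositeCell} and the fact that $v \in W_d^0$ from \Cref{lem:vInWSd}, but involves no hard new input.
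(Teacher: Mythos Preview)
Your approach is essentially the paper's: identify $\phi(\lgl^{R})=U_{R'}\boldsymbol e$ via \Cref{lem:involution} and \Cref{fibreId}, and relate $R'$ to the inversion set of $v$. The only difference is the endgame. You aim to show directly that $U_{R'}\boldsymbol e$ \emph{equals} the translated open cell $(v^{-1}\Borel v)\boldsymbol e$, which requires the decomposition $U_\Borel=U_{\{\alpha>0\,\mid\,v^{-1}\alpha<0\}}\cdot U_{\{\alpha>0\,\mid\,v^{-1}\alpha>0\}}$ in the affine setting---standard, but exactly the bookkeeping you flag as the main obstacle. The paper instead records only the containment $\phi(\lgl^{R})\subset v^{-1}\Borel v\Para/\Para\subset v^{-1}X_\J(v)$, and closes with the dimension count $\#R'=l(v)=\dim X_\J(v)$ together with injectivity of $\phi$ and irreducibility of $v^{-1}X_\J(v)$. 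This avoids your obstacle entirely; you may want to adopt it.
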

\begin{proof}
Following the proof of \Cref{fibreId}, we see that $\eta(t^{-1}\lgl^R)=U_\Phi$, where $\Phi\define\left\{\alpha-\delta\mid\alpha\in R\right\}$.
It follows from \Cref{lem:involution} that 
$$\Phi=\left\{\alpha\in\roots-.d.\mid v(\alpha)>0\right\}=v^{-1}(\roots+..)\cap\roots-..$$
In particular, $\#\Phi=l(v)$ (see \cite[\S1.3.14]{kumar2012kac}), and so 
$$\dim\lgl^R=\dim U_\Phi=\#\Phi=l(v)=\dim X_\J(v)=\dim v^{-1} X_\J(v)$$ 
Further observe that
$$\phi(\lgl^\rw)=\eta(t^{-1}\lgl^R)\ (\mod\Para)\subset v^{-1}\Borel v\ (\mod\Para)\subset v^{-1}X_\J(v)$$
The result follows from the injectivity of $\phi$ and the irreducibility of $v^{-1} X_\J(v)$.
\end{proof}
\begin{theorem}
\label{mainResult}
The closure of $\phi(\con)$ in $\gL/\Para$ is a Schubert variety if and only if $X_\J(w_0w\ws)$ is smooth.
\end{theorem}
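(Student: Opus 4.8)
The plan is to make the compactification $\overline{\phi(\con)}$ completely explicit, reduce the assertion ``is a Schubert variety'' to ``is $\Borel$-stable'', and then detect $\Borel$-stability through the combinatorics of \Cref{sb}. First I would combine the $\gl$-equivariance of $\phi$ (\Cref{form:phip}), the description of $\con$ in \Cref{prop:uw}, and the preceding proposition — which identifies $\overline{\phi(\lgl^\rw)}$ with $v^{-1}X_\J(v)$, where $v={}^\iota(w_0w\ws)$ — to conclude that, inside $\gL/\Para$,
\[
    \overline{\phi(\con)}=\overline{\borel\,w\cdot v^{-1}X_\J(v)}.
\]
This is an irreducible, $\borel$-stable, closed subvariety of the projective variety $X_\J(\tau_q)$, of dimension $l(w)+l(v)=\dim\gl/\para$ by \Cref{lem:vInWSd}; writing $X_\J(v)=\overline{\Borel v\Borel}\,\Para/\Para$ and using the Demazure product (\Cref{lem:bruProd}) one moreover sees $\overline{\phi(\con)}\subseteq X_\J\!\big((w\star v^{-1}\star v)^\J\big)$.

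Next comes the reduction. An irreducible $\Borel$-stable closed subvariety of $\gL/\Para$ is a finite union of Bruhat cells, hence equals the closure of its unique dense cell, i.e.\ is a Schubert variety; conversely every Schubert variety is $\Borel$-stable. Since $\overline{\phi(\con)}$ is already $\borel$-stable and the Iwahori $\Borel$ is generated by $T$ together with the positive simple affine root subgroups, we have $\Borel=\langle\borel,U_{\alpha_0}\rangle$, so $\overline{\phi(\con)}$ is a Schubert variety precisely when it is stable under the single root subgroup $U_{\alpha_0}$. Finally, since $\iota$ induces an isomorphism $X_\J({}^\iota v)\cong X_\J(v)$ and fixes $\ws$ (\Cref{actionW}), smoothness of $X_\J(w_0w\ws)$ amounts to smoothness of $X_\J(v)$; so it suffices to prove that $\overline{\phi(\con)}$ is $\Borel$-stable if and only if $X_\J(v)$ is smooth.

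For the easy implication, suppose $X_\J(v)$ is smooth. Then \Cref{sb} gives $v=w_{\mathcal L}w_{\mathcal L\cap\J}$ with $\mathcal L=Supp(v)$ and $X_\J(v)=\Para_{\mathcal L}/(\Para_{\mathcal L}\cap\Para)$; since $v\in W_{\mathcal L}$ normalizes $\Para_{\mathcal L}$, we get $v^{-1}X_\J(v)=X_\J(v)$, which is $\Para_{\mathcal L}$-stable and in particular $\Borel$-stable. Consequently $\overline{\phi(\con)}=\overline{\borel\,w\,\Para_{\mathcal L}}\ \mod\Para=\overline{\Borel(w\star w_{\mathcal L})\Borel}\ \mod\Para=X_\J(wv)$, a Schubert variety; here $w\star w_{\mathcal L}=(w\star v)\star w_{\mathcal L\cap\J}$ (as $w_{\mathcal L}=v\star w_{\mathcal L\cap\J}$), $w\star v=wv$, and $(wv)^\J=wv$, all from \Cref{lem:vInWSd}. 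In passing this shows that, when the criterion holds, the compactification of $\con$ is the Schubert variety $X_\J(wv)$, which is what will power \Cref{main1,main2}.

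The converse is the point I expect to be hard: one must show that when $X_\J(v)$ is \emph{singular}, $\overline{\phi(\con)}$ fails to be $U_{\alpha_0}$-stable. My plan is to prove first that $\Borel$-stability of $\overline{\phi(\con)}=\overline{\borel\,w\cdot v^{-1}X_\J(v)}$ forces the slice $v^{-1}X_\J(v)$ itself to be $\Borel$-stable; this is the delicate step, which I would attack by a local computation at the $T$-fixed point $\boldsymbol e\in v^{-1}X_\J(v)$ — equivalently, by tracking how $U_{\alpha_0}$ pushes the $L^-\gl$-cell $Y_\J(\tau_q)\cong T^*\gl/\para$ out of $\overline{\phi(\con)}$ unless the root set $\rw=\left\{\alpha\in\proots\mid\alpha\geq\alpha_d,\ w(\alpha)>0\right\}$ satisfies the closedness property in \Cref{sb}(5). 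Granting that $v^{-1}X_\J(v)$ is $\Borel$-stable, it is also $v^{-1}\Borel v$-stable (since the affine Schubert variety $X_\J(v)$ is $\Borel$-stable), hence stable under $\langle\Borel,v^{-1}\Borel v\rangle=\Para_{\mathcal L}$ with $\mathcal L=Supp(v)$; then $X_\J(v)=v\cdot\big(v^{-1}X_\J(v)\big)$ is $\Para_{\mathcal L}$-stable, and being a nonempty closed subvariety of the $\Para_{\mathcal L}$-homogeneous space $X_\J(w_{\mathcal L}^\J)\supseteq X_\J(v)$ it must equal it, so $X_\J(v)$ is $\Para_{\mathcal L}$-homogeneous and therefore smooth by \Cref{sb}.
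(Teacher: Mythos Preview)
Your forward implication (smooth $\Rightarrow$ Schubert) is correct and essentially matches what the paper obtains, and your reduction of ``Schubert variety'' to ``$\Borel$-stable'' is fine.  The problem is the converse.  You explicitly flag the step ``$\Borel$-stability of $\overline{\phi(\con)}$ forces $\Borel$-stability of $v^{-1}X_\J(v)$'' as delicate and propose to do it by a local computation, but you do not actually carry it out; as written this is a genuine gap, not a routine verification.  (There is also a smaller gap: the equality $\langle\Borel,v^{-1}\Borel v\rangle=\Para_{Supp(v)}$ is not immediate, since not every simple $\alpha\in Supp(v)$ satisfies $v(\alpha)<0$; one must argue indirectly through the full inversion set.)

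The paper sidesteps this difficulty entirely with a dimension argument, and this is the idea you are missing.  Since $\overline{\phi(\con)}$ is irreducible and $\borel$-stable, it is a Schubert variety if and only if it \emph{equals} the minimal Schubert variety containing it, i.e.\ if and only if their dimensions agree.  From $\phi(bw,X)\in\Borel w v^{-1}\Borel v\Para$ one reads off that this minimal Schubert variety is $X_\J(wv^{-1}\star v)$, so the criterion becomes
\[
    \dim X_\J(wv^{-1}\star v)=\dim\con=\dim\gl/\para.
\]
Passing to $\gL/\Borel$ and using \Cref{lem:vInWSd} together with \Cref{lengthVee}, one computes $l(wv^{-1}\star v\star\ws)=l(w)+l(v^{-1}\star v\ws)$.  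Since always $l(v^{-1}\star v\ws)\geq l(v\ws)=l(v)+l(\ws)$, the dimension equality holds precisely when $l(v^{-1}\star v\ws)=l(v\ws)$, which is exactly condition~(3) of \Cref{sb}.  Thus both directions fall out of a single length computation, with no need to analyse $U_{\alpha_0}$-stability directly.
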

\begin{proof}
For $(bw,X)$ be a generic point in \con, we have \begin{align*}
    \phi(bw,X)=bw\,\phi(X)\in\Borel wv^{-1}\Borel v\Para\ (\mod\Para) 
\end{align*}
Hence the minimal Schubert variety containing $\phi(\con)$ is $X_\J(wv^{-1}\star v)$.
Consequently, the closure $\overline{\phi(\con)}$ is a Schubert variety if and only if\begin{align}\label{work20}
    dim X_\J(wv^{-1}\star v)=\dim\con=\dim\gl/\para
\end{align}
Consider the following Cartesian diagram:\[
\begin{tikzcd}
    X_\Borel(wv^{-1}\star v\star\ws)\arrow[d]\arrow[r,hook] &\gL/\Borel\arrow[d]\\
    X_\J(wv^{-1}\star v)\arrow[r,hook]                      &\gL/\Para
\end{tikzcd}
\]
The dimension of the generic fibre for the right projection is $\dim\Para/\Borel$.
Observe that $X_\Borel(wv^{-1}\star v\star\ws)$ is the pullback of $X_\J(wv^{-1}\star v)$ to $\gL/\Borel$. 
It follows that \Cref{work20} is equivalent to \begin{align}\label{work21}
    \dim X_\Borel(wv^{-1}\star v\star\ws)=\dim\gl/\para+\dim\Para/\Borel=\dim\gl/\borel
\end{align}
We see from \Cref{lem:vInWSd} and \Cref{lengthVee} that $wv^{-1}=w\star v^{-1}$ and $v\star\ws=v\ws$. 
Hence, we have:\begin{align}\label{work22}
    wv^{-1}\star v\star\ws=w\star v^{-1}\star v\ws
\end{align}
Observe that since $v,\ws\in W_d$, we have $v^{-1}\star v\ws\in W_d$.
Recall also from \Cref{lem:vInWSd} that $w\in W^\J\cap W_0\subset W^d$.
It follows that\begin{align}
        w\star v^{-1}\star v\ws     &=w(v^{-1}\star v\ws)   &\text{using \Cref{lengthVee}}\nonumber\\
\implies l(wv^{-1}\star v\star\ws)  &=l(w(v^{-1}\star v\ws))&\text{using \Cref{work22}}\nonumber\\
\label{work23}                  &=l(w)+l(v^{-1}\star v\ws)  &\text{using }w\in W^d,\,v^{-1}\star v\ws\in W_d
\end{align}
Now $l(v^{-1}\star v\ws)\geq l(v\ws)$, with equality holding if and only if $v^{-1}\star v\ws=v\ws$. 
Continuing \Cref{work23}, we have \begin{align*}
    \dim X_\Borel(wv^{-1}\star v\star\ws)   &\geq l(w)+l(v\ws)\\ 
                            &=l(w)+l(v)+l(\ws)          &\text{}v\in W^\J,\,\ws\in W_\J\\
                            &=l(wv)+l(\ws)              &\text{}w\in W^d,\,v\in W_d\\
                            &=\dim G/B
\end{align*}
Hence, \Cref{work21} holds if and only if $v^{-1}\star v\ws=v\ws$, which is equivalent to $X_\J(v)$ being smooth, see \Cref{veeLemma}.
Finally, the Schubert varieties $X_\J(v)$ and $X_\J(w_0w\ws)$ being isomorphic (since $v={}^\iota(w_0w\ws)$), we deduce that the closure $\overline{\phi(\con)}$ is a Schubert variety if and only if $X_\J(w_0w\ws)$ is smooth.
\end{proof}

\begin{theorem}
\label{main1}
Let $w\in W^\J_0$ be such that $X_\J(w_0w\ws)$ is smooth. 
Then \con\ is normal, Cohen-Macaulay, and has a resolution via Bott-Samelson varieties.
Further, the family $\left\{\con\mid X_\J(w_0w\ws)\text{ is smooth}\right\}$ is compatibly Frobenius split.
\end{theorem}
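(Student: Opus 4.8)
The plan is to derive all four conclusions from \Cref{mainResult} together with the standard geometry of Schubert varieties in Kac--Moody partial flag varieties. By \Cref{mainResult}, the hypothesis that $X_\J(w_0w\ws)$ is smooth ensures that $Z\define\overline{\phi(\con)}$ is a Schubert variety; since $\phi(\con)\subset\phi(T^*\gl/\para)=Y_\J(\tau_q)\subset X_\J(\tau_q)$ and $X_\J(\tau_q)$ is closed in $\gL/\Para$, the variety $Z$ is a Schubert subvariety $X_\J(u)$ of $X_\J(\tau_q)$ for some $u\in W^\J$ with $u\leq\tau_q$. Note that the ambient Schubert variety $X_\J(\tau_q)$ depends only on the cominuscule root $\alpha_d$, not on $w$; this will matter for the last assertion.

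First I would identify \con\ with an open subvariety of $X_\J(u)$. Recall from \Cref{form:phip} that $\phi$ is an isomorphism of $T^*\gl/\para$ onto $Y_\J(\tau_q)$, which is an open subvariety of $X_\J(\tau_q)$. Since \con\ is closed in $T^*\gl/\para$, its image $\phi(\con)$ is closed in $Y_\J(\tau_q)$, so
\[
    \phi(\con)=\overline{\phi(\con)}\cap Y_\J(\tau_q)=X_\J(u)\cap Y_\J(\tau_q),
\]
which is a dense open subvariety of the Schubert variety $X_\J(u)$ (dense because $X_\J(u)=\overline{\phi(\con)}$). Verifying this closedness is the one step that genuinely exploits the structure of $\phi$; I expect it to be the main technical point, everything after it being formal descent to open subvarieties.

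Next I would invoke the classical facts: a Schubert variety such as $X_\J(u)$ is normal and Cohen--Macaulay, and for any reduced word $\underline u$ for $u$ it admits a Bott--Samelson resolution $\theta_{\underline u}\colon Z_{\underline u}\to X_\J(u)$, proper and birational with $Z_{\underline u}$ smooth (see \cite{kumar2012kac,faltings2003algebraic,littelmann2003bases}). Normality and Cohen--Macaulayness are local properties, hence inherited by the open subvariety $X_\J(u)\cap Y_\J(\tau_q)\cong\con$; thus \con\ is normal and Cohen--Macaulay. Restricting $\theta_{\underline u}$ over the dense open set $X_\J(u)\cap Y_\J(\tau_q)$ yields a proper birational morphism onto $X_\J(u)\cap Y_\J(\tau_q)\cong\con$ whose source $\theta_{\underline u}^{-1}\!\left(X_\J(u)\cap Y_\J(\tau_q)\right)$ is a smooth open subvariety of $Z_{\underline u}$; this is the desired resolution via (an open piece of) a Bott--Samelson variety.

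Finally, for the splitting statement I would use that the Schubert variety $X_\J(\tau_q)$ carries a Frobenius splitting compatible with all of its Schubert subvarieties simultaneously (\cite{kumar2012kac,littelmann2003bases}); in particular this splitting is compatible with every $Z=\overline{\phi(\con)}$ that arises from a $w\in W_0^\J$ with $X_\J(w_0w\ws)$ smooth. A Frobenius splitting restricts to any open subscheme, carrying a compatibly split closed subvariety $Y$ to the compatibly split subvariety $Y\cap U$; applying this to the open subset $Y_\J(\tau_q)\subset X_\J(\tau_q)$ and then transporting along the isomorphism $\phi^{-1}\colon Y_\J(\tau_q)\xrightarrow{\ \sim\ }T^*\gl/\para$ produces a Frobenius splitting of $T^*\gl/\para$ that compatibly splits every member of $\left\{\con\mid X_\J(w_0w\ws)\text{ is smooth}\right\}$. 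Thus the family is compatibly Frobenius split, as claimed.
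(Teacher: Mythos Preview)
Your proposal is correct and follows essentially the same approach as the paper: invoke \Cref{mainResult} to identify the closure of $\phi(\con)$ with a Schubert subvariety of $X_\J(\tau_q)$, then transfer the standard properties of Schubert varieties (normality, Cohen--Macaulayness, Bott--Samelson resolution, compatible Frobenius splitting) to \con\ via the open embedding $\phi$. The paper's own proof is a single sentence deferring to the references \cite{faltings2003algebraic,littelmann2003bases,mehta1985frobenius}; the identification $\phi(\con)=X_\J(wv)\cap Y_\J(\tau_q)=Y_\J(wv)$ that you spell out here is made explicit only later, in the proof of \Cref{main2}.
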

\begin{proof}
These are standard results for Schubert varieties.
One can find details in \cite{faltings2003algebraic,littelmann2003bases,mehta1985frobenius}. 
\end{proof}

\begin{theorem}
\label{main2}
Let $w\in W^\J_0$ be such that $X_\J(w_0w\ws)$ is smooth. 
Let $V(\lambda)$ be the simple module associated to a dominant weight $\lambda$ corresponding to \Para. 
Recall the monomial basis $\mathbb M(\lambda)$ and the associated elements $u_\pi\in V(\lambda)$ as developed in \cite{littelmann2003bases}.
The ideal sheaf of $\con$ in $T^*G/P$ is $\phi^{-1}\mathcal I$, where $\mathcal I=\left\langle u_\pi\mid\pi\leq\tau_q,\,\pi\not\leq wv\right\rangle$.
\end{theorem}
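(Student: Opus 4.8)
The plan is to realize $\con$ as the pullback of the affine Schubert variety $X_\J(wv)$ under $\phi$, and then transport Littelmann's standard-monomial description of the ideal of an affine Schubert variety along this pullback. First I would invoke \Cref{mainResult}: since $X_\J(w_0w\ws)$ is smooth, the closure $\overline{\phi(\con)}$ in $\gL/\Para$ is a Schubert variety, and tracing through the proof of \Cref{mainResult} it is precisely $X_\J(wv^{-1}\star v)=X_\J(wv)$ (the last equality using \Cref{work23} together with $v^{-1}\star v\ws=v\ws$, which holds exactly when $X_\J(v)$ is smooth). So $\overline{\phi(\con)}=X_\J(wv)$. On the other hand, by \Cref{form:phip} the map $\phi$ identifies $T^*G/P$ with the open subvariety $Y_\J(\tau_q)\subset X_\J(\tau_q)$, and $\con$ is a closed subvariety of $T^*G/P$, so $\phi(\con)=\overline{\phi(\con)}\cap Y_\J(\tau_q)=X_\J(wv)\cap Y_\J(\tau_q)$, i.e. $\con$ is (isomorphic to) the trace of the Schubert variety $X_\J(wv)$ on the open cell $Y_\J(\tau_q)$ of the ambient $X_\J(\tau_q)$. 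In particular the ideal sheaf of $\con$ inside $T^*G/P$ is the pullback under $\phi$ of the ideal sheaf of $X_\J(wv)$ inside $X_\J(\tau_q)$.

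Next I would recall the standard monomial setup from \cite{littelmann2003bases}: for a dominant weight $\lambda$ corresponding to $\Para$, the line bundle $\mathcal L_\lambda$ on $\gL/\Para$ has $H^0(\gL/\Para,\mathcal L_\lambda)=V(\lambda)^*$ with basis dual to the monomial basis $\mathbb M(\lambda)$, and the restriction to a Schubert variety $X_\J(\pi)$ kills exactly the basis vectors $u_\pi$ indexed by $\pi\not\le$ the given element; more precisely the homogeneous coordinate ring of $X_\J(\tau_q)$ is $\bigoplus_\lambda\,\bigl(\bigoplus_{\pi\le\tau_q}\mathbf k\,u_\pi\bigr)$, and the homogeneous ideal of the Schubert subvariety $X_\J(wv)\subset X_\J(\tau_q)$ is spanned by those $u_\pi$ with $\pi\le\tau_q$ but $\pi\not\le wv$. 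This uses that $wv\le\tau_q$, which follows from \Cref{lem:vInWSd} (giving $l(wv)=l(w)+l(v)=\dim G/P$, hence $wv\le w_0^\J w_d^\J=\tau_q$ by \Cref{result:q} after checking the Bruhat comparison — I would spell this out via the subword characterization, or simply observe $wv\in W^\J$ has the correct support). Sheafifying, the ideal sheaf of $X_\J(wv)$ in $X_\J(\tau_q)$ is generated by the sections $u_\pi$, $\pi\le\tau_q$, $\pi\not\le wv$; restricting to the open subset $Y_\J(\tau_q)$ and pulling back along $\phi$ gives precisely $\phi^{-1}\mathcal I$ with $\mathcal I=\langle u_\pi\mid\pi\le\tau_q,\ \pi\not\le wv\rangle$, as claimed. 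Here one should note $wv=w\star v^{-1}\star v\star\ws$ only up to the minimal-representative bookkeeping of \Cref{work22}; I would state the generators in terms of $wv$ and remark that this is the element appearing as $wv^{-1}\star v$ in \Cref{mainResult}.

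The main obstacle I anticipate is the compatibility of restriction to the \emph{open} cell $Y_\J(\tau_q)$ with the standard-monomial generators: one must check that no generator becomes a unit or that the generators continue to generate the ideal sheaf after passing to the open affine, which amounts to knowing that $X_\J(wv)$ meets $Y_\J(\tau_q)$ in a dense open subset (true since $\con$ is irreducible of the right dimension by \Cref{prop:uw} and \Cref{lem:vInWSd}) and that the local ring computation is unchanged — this is where one genuinely uses that $X_\J(\tau_q)$ and $X_\J(wv)$ are both normal (so their ideal-theoretic and scheme-theoretic intersections with an open set agree) together with the projective normality / linear defining equations provided by Littelmann's theory. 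A secondary technical point is verifying that $\phi^{-1}$ of the sheafified ideal equals the ideal sheaf of $\con$ rather than merely a sheaf with the same support, which again follows from normality of $X_\J(wv)$ (hence of $\con$, by \Cref{main1}) since a reduced closed subscheme of a normal variety is cut out by the radical ideal its sections generate; I would close by citing \Cref{main1} for normality and Cohen--Macaulayness to legitimize these reductions.
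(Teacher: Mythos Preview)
Your proposal is correct and follows essentially the same route as the paper: identify $\phi(\con)$ with $X_\J(wv)\cap Y_\J(\tau_q)=Y_\J(wv)$ using closedness of $\con$ in $T^*G/P$ together with \Cref{mainResult}, then pull back Littelmann's standard-monomial ideal of $X_\J(wv)\subset X_\J(\tau_q)$ along the open immersion $\phi$. Your anticipated obstacles are not genuine issues---pulling back an ideal sheaf along an open immersion (indeed, along the isomorphism $\phi:T^*G/P\xrightarrow{\sim}Y_\J(\tau_q)$) is tautological, so no appeal to normality or projective normality is needed at that step---and the paper accordingly omits them.
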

\begin{proof}
Recall (cf. \cite{littelmann2003bases}) that $\left\{u_\pi\mid \pi\leq\tau_q\right\}$ is a basis for $H^0(X_\J(\tau_q),L(\lambda))$, where $L(\lambda)$ is the line bundle associated to $\lambda$.
Further, $\mathcal I=\left\langle u_\pi\mid\pi\leq\tau_q,\,\pi\not\leq wv\right\rangle$ is the ideal sheaf of $X_\J(wv)$ in $X_\J(\tau_q)$.
Since \con\ is closed in $T^*G/P$, we have \begin{align*}
        \phi(\con)  &=\overline{\phi(\con)}\cap\phi(T^*G/P) \\ 
                &=X_\J(wv)\cap L^-\gl\boldsymbol e\cap X_\J(\tau_q)=Y_\J(wv)
\end{align*}
It follows that the ideal sheaf of \con\ in $T^*G/P$ is the pull-back (via $\phi$) of the restriction of $\mathcal I$ to $Y_\J(\tau_q)$, i.e., the ideal sheaf is $\phi^{-1}(\mathcal I)$.
\end{proof}

\begin{prop}
\label{conFibreId}
Let $w\in W_0^\J$ be such that $X_\J(w_0w\ws)$ is smooth, and let $N^*_0X_\J(w)$ denote the fibre at identity of the conormal variety \con.
Then \begin{align*}
    \phi(N^*_0X_\J(w))=\bigcup\limits_{u\in\mathcal S} X^-_\J(u)
\end{align*}
where $\mathcal S=\left\{u\in W_d^0\mid u\leq (wv)^{\dynk_0}\right\}$, and $(wv)^{\dynk_0}$ is the minimal representative of $wv$ with respect to $\dynk_0$.
\end{prop}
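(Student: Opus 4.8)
The plan is to transport the whole computation through the isomorphism $\phi$ to the affine Schubert variety, where $\phi(\con)$ has already been identified, and then to reduce the problem to a Bruhat‑order bookkeeping in $W$.

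First I would collect the inputs. Since $X_\J(w_0w\ws)$ is smooth, the proof of \Cref{main2} gives $\phi(\con)=Y_\J(wv)=L^-\gl\boldsymbol e\cap X_\J(wv)$, where $v={}^\iota(w_0w\ws)$. Next, by \Cref{fibreId} together with the formula in \Cref{form:phip}, the restriction of $\phi$ to the fibre of $T^*\gl/\para$ over the identity coset — which I identify with $\u_\para$ — is the map $X\mapsto\eta(t^{-1}X)\,\mod\Para$, an isomorphism onto $X^-_\J(w_d^\J)=\Borel^-\boldsymbol e\cap X_\J(w_d^\J)$. Since $\phi$ is injective and $N^*_0X_\J(w)$ is, by definition, the intersection of $\con$ with this base fibre, I would obtain
\[
    \phi(N^*_0X_\J(w))=Y_\J(wv)\cap X^-_\J(w_d^\J)=\Borel^-\boldsymbol e\cap\bigl(X_\J(wv)\cap X_\J(w_d^\J)\bigr),
\]
where the last equality uses $\Borel^-\subseteq L^-\gl$.

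The core step — which I expect to be the main obstacle — is the combinatorial identity
\[
    X_\J(wv)\cap X_\J(w_d^\J)=\bigcup_{u\in\mathcal S}X_\J(u).
\]
Writing each Schubert variety as the union of its Bruhat cells indexed by $W^\J$, this reduces to showing $\left\{z\in W^\J\mid z\leq wv,\ z\leq w_d^\J\right\}=\mathcal S$. For ``$\subseteq$'': from $z\leq w_d^\J\leq w_d$ one gets $z\in W_d$ (as $W_d$ is finite with longest element $w_d$), hence $z\in W_d\cap W^\J=W_d^0=W_d\cap W^0$ by \Cref{lem:vInWSd}; since $W^0=W^{\dynk_0}$ consists of elements minimal with respect to $\dynk_0$, the relation $z\leq wv$ upgrades to $z\leq(wv)^{\dynk_0}$ because the projection $W\to W^{\dynk_0}$ onto minimal coset representatives is order‑preserving, so $z\in\mathcal S$. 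For ``$\supseteq$'': given $u\in\mathcal S$ we have $u\in W_d^0\subseteq W^\J$ and $u\leq(wv)^{\dynk_0}\leq wv$; and $u\leq w_d^\J$, because $u\in W_d$ gives $u\leq w_d$, and then order‑preservation of $W\to W^\J$ gives $u=u^\J\leq(w_d)^\J=w_d^\J$.

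Finally, intersecting with the open subset $\Borel^-\boldsymbol e$ commutes with the finite union, so
\[
    \phi(N^*_0X_\J(w))=\Borel^-\boldsymbol e\cap\bigcup_{u\in\mathcal S}X_\J(u)=\bigcup_{u\in\mathcal S}\bigl(\Borel^-\boldsymbol e\cap X_\J(u)\bigr)=\bigcup_{u\in\mathcal S}X^-_\J(u),
\]
which is the assertion; since $\mathcal S$ is a lower order ideal of $W_d^0$ and $X^-_\J(u)\subseteq X^-_\J(u')$ whenever $u\leq u'$, one could equivalently restrict the union to the maximal elements of $\mathcal S$. Apart from the bookkeeping in the previous paragraph — where one must keep straight that $\dynk_0$ contains $\alpha_d$ but not $\alpha_0$ while $\dynk_d$ contains $\alpha_0$ but not $\alpha_d$, and correctly use $W_d\cap W^\J=W_d^0=W_d\cap W^0$ — the remaining subtlety is to check that the first display really follows from injectivity of $\phi$ and from the fact that $\phi$ carries the base fibre $\u_\para$ onto $X^-_\J(w_d^\J)$; throughout, these identities are read at the level of reduced subschemes.
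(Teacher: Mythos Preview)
Your argument is correct and follows essentially the same route as the paper's proof: identify $\phi(N^*_0X_\J(w))$ with $X^-_\J(w_d^\J)\cap X_\J(wv)$, then reduce to the Bruhat‑order statement $\{z\in W^\J\mid z\leq w_d^\J,\ z\leq wv\}=\mathcal S$ via $W_d\cap W^\J=W_d^0$ and the order‑preservation of the projection $W\to W^{\dynk_0}$. Your write‑up is in fact a bit more explicit than the paper's, spelling out why $\phi(\con)=Y_\J(wv)$ and $\Borel^-\subseteq L^-\gl$ give the first display, and handling both inclusions in the combinatorial step separately.
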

\begin{proof}
Recall from \Cref{fibreId} that $\phi(T_{\boldsymbol e}^*G/P)=X^-_\J(w_d^\J)$. 
It follows that \begin{align*}
    \phi(N^*_0X_\J(w))=X^-_\J(w_d^\J)\cap X_\J(wv)=\bigcup X^-_\J(u) 
\end{align*}
where the union runs over $\left\{u\in W^\J\mid u\leq w_d^\J,\,u\leq wv\right\}$.
Since $w_d^\J$ is maximal in $W_d^\J$, the condition $\left\{u\in W^\J,\ u\leq w_d^\J\right\}$ is equivalent to $u\in W_d\cap W^\J=W_d^0$, see \Cref{lem:vInWSd}. 
Hence,\[
    \left\{u\in W^\J\mid u\leq w_d^\J,\,u\leq wv\right\}=\left\{u\in W^0_d\mid u\leq w_d^\J,\,u\leq wv\right\}
\]
Finally, consider $u\in W^0$.
If $u\leq wv$, then $u\leq (wv)^{\dynk_0}$. 
It follows that \begin{align*}
    \left\{u\in W^\J\mid u\leq w_d^\J,\,u\leq wv\right\}&=\left\{u\in W^0_d\mid u\leq w_d^\J,\,u\leq wv\right\}\\
        &=\left\{u\in W_d^0\mid u\leq(wv)^{\dynk_0}\right\}=\mathcal S
\end{align*}
\vskip-4pt
\end{proof}

\setlength\parskip{5pt}
\section{Determinantal Varieties}
\label{sec:det}
In this section, we use the results of \Cref{good} to prove the following:
The conormal fibre at the zero matrix of the rank $r$ (usual, symmetric, skew-symmetric resp.) determinantal variety is the co-rank $r$ (usual, symmetric, skew-symmetric resp.) determinantal variety.
\par 
\par 
Consider a rank $r$ (usual, symmetric, skew-symmetric resp.) determinantal variety $\Sigma$. 
There exists a simply connected, almost simple group \gl\ (of type $A$, $C$, $D$ resp.) and a cominuscule Grassmannian $G/P$ such that $\Sigma$ is naturally identified as the opposite cell of some Schubert variety $X_\J(w)\subset G/P$, see \cite{lakshmibai1978geometry}.
For such $w$, we verify that the Schubert variety $X_\J(w_0w\ws)$ is smooth.
This allows us to apply \Cref{conFibreId}. 
Finally, we show that the union of the various Schubert varieties in \Cref{conFibreId} is equal to a single Schubert variety, which we further verify to be isomorphic to the co-rank $r$ (usual, symmetric, skew-symmetric resp.) determinantal variety.
\par 
\par 
We carry out the proof in detail only for the skew-symmetric determinantal varieties.
The other two cases are completely analogous.
For the usual determinantal varieties, this result has been proved by Strickland \cite{strickland1982conormal}.
Further, the conormal fibres at the zero matrix of the usual determinantal varieties and symmetric determinantal varieties have been studied by Gaffney and Rangachev (cf. \cite{gaffney2014pairs}) 
and Gaffney and Lira (cf. \cite{mich}).
\subsection{The Weyl Group of $D_n$}
Let $\dynk_0=D_n$, $\dynk=\widetilde D_n$, and $W_0$ (resp. $W$) the Weyl group of $\dynk_0$ (resp. \dynk).
For $1\leq i,j\leq n-1$, the braid relations in $W_0$ are \begin{align*}
\hspace{90pt}    s_is_j      &=s_js_i    &\vert i-j\vert\geq 2\\
\hspace{90pt}    s_is_js_i   &=s_js_is_j &\vert i-j\vert=1
\end{align*}
The remaining braid relations are $s_ns_i=s_is_n$ for $i\neq n-2$ and $$s_ns_{n-2}s_n=s_{n-2}s_ns_{n-2}$$
Let $\mu$ be the involution on $\left\{1,\ldots,2n\right\}$ given by $\mu(i)\define 2n+1-i$.
We embed the Weyl group $W_0$ into the symmetric group $S_{2n}$ as follows\begin{align}\label{WeylEmbed}
    W_0=\left\{w\in S_{2n}\mid w\mu=\mu w,\,sgn(w)=1\right\}
\end{align}
via the homomorphism (cf. \cite{lakshmibai1978geometry}) given by
\begin{align*}
\hspace{90pt}   s_i &\mapsto r_ir_{2n-i}            &\quad i\neq n\\
\hspace{90pt}   s_n &\mapsto r_nr_{n-1}r_{n+1}r_n   & 
\end{align*}
where $r_i$ denotes the transposition $(i\ i+1)$ in $S_{2n}$.
It is clear that $w\in W_0$ is uniquely determined by its value on $1,\ldots,n$.
Accordingly, we represent $w$ by the string $[w(1),\ldots,w(n)]$.

\subsection{The Involution $\iota$ for $D_n$}
The simple root $\alpha_n$ is cominuscule in $\dynk_0=D_n$, and $\J\define\dynk_0\backslash\{\alpha_n\}=\left\{\alpha_1,\ldots,\alpha_{n-1}\right\}$ is isomorphic to $A_{n-1}$.
Let $\iota$ be the involution defined in \Cref{defn:iota}.
Recall that the action of the \emph{Weyl involution} $-w_\J$ on $\J\cong A_{n-1}$ is given by $-w_\J(\alpha_i)=\alpha_{n-i}$ (cf. \cite[Ch.VI\S4.7]{BourbakiNicolas2008Lgal}), and so $\iota(\alpha_i)=\alpha_{n-i}$ for $1\leq i\leq n-1$.
Further, since $\iota$ interchanges $\alpha_0$ and $\alpha_n$, we have
\begin{align}\label{specificiota}
\hspace{80pt}   \iota(\alpha_i)=\alpha_{n-i}&&\forall\,\alpha_i\in\dynk.
\end{align}

\subsection{Skew-Symmetric Determinantal Varieties}
\label{sec:ssdv}
Let $M^{sk}_n$ be the variety of skew-symmetric $n\times n$ matrices.
The \emph{rank $r$ skew-symmetric determinantal variety} $\overline\Sigma_r^{sk,n}$ is the subvariety of $M^{sk}_n$ given by:\begin{align*}
    \overline\Sigma_r^{sk,n}&=\left\{A\in M^{sk}_n\mid rank(A)\leq r\right\} 
\end{align*}
Recall that the rank of a skew-symmetric matrix is necessarily even.
Hence, we assume without loss of generality that \emph{$r$ is even}.
\par 
\par 
Let $G$ be the simply connected, almost-simple group of type $D_n$, and let $P\subset G$ be the parabolic group corresponding to $\J=\left\{\alpha_1,\ldots,\alpha_{n-1}\right\}$, see \Cref{TBL:comin}.
Following \cite{lakshmibai1978geometry}, we identify $M^{sk}_n$ with the opposite cell in $G/P$. 
Under this identification, the zero matrix corresponds to $\boldsymbol e\in G/P$, and $\overline\Sigma_r^{sk,n}=X_\J^-(w_r)$, where
\begin{align}\label{form:skew}
    w_r &\define[r+1,\ldots,n,2n-r+1,\ldots,2n]
\end{align}
in the sense of \Cref{WeylEmbed}.
Observe that \begin{align}\label{wrbelongs}
    w_r\in W^\J\cap W_0=W_0^{\J\cup\{\alpha_0\}}
\end{align}
The last equality is \Cref{lem:vInWSd}, (1). 

\begin{remark}
In \cite{lakshmibai1978geometry}, the skew-symmetric variety is identified with a Schubert variety corresponding to the group $SO(2n)$, which is not simply connected.
This is not a problem however, since Schubert varieties depend only on the underlying Dynkin diagram, and not on the group per se.
\end{remark}

\begin{theorem}
\label{fibreDet}
The conormal fibre of $\overline\Sigma_r^{sk,n}$ at $0$ is isomorphic to $\overline\Sigma_{\overline n-r}^{sk,n}$ where\begin{align*}
    \overline n=\begin{cases} n&\text{if $n$ is even,}\\n-1 &\text{if $n$ is odd.}\end{cases}
\end{align*}
\end{theorem}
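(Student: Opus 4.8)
The plan is to apply \Cref{conFibreId} with $w=w_r$ (recall $w_r\in W^\J\cap W_0=W_0^\J$ from \Cref{wrbelongs}, so the proposition is applicable once smoothness is checked), then collapse the resulting union of opposite cells to a single one, and recognize that cell as $\overline\Sigma^{sk,n}_{\overline n-r}$ via the identification of \Cref{sec:ssdv}.

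\textbf{Step 1: smoothness of $X_\J(w_0w_r\ws)$.} First I would compute $z\define w_0w_r\ws$ as a permutation in $S_{2n}$, using the embedding \Cref{WeylEmbed}, and record it in the string notation $[z(1),\dots,z(n)]$. Here the cases $n$ even and $n$ odd must be separated: the longest element $w_0\in W_0$ acts as $-1$ on $\mathbb Z\dynk_0$ exactly when $n$ is even, and otherwise fixes $\alpha_n$, which is precisely the source of the quantity $\overline n$. A short computation should put $z$ in a ``block'' Grassmannian form from which one reads off both that $z\in W_0^\J$ and that $X_\J(z)=\Para_\L/(\Para_\L\cap\Para)$ for an explicit connected subdiagram $\L\subset\dynk_0$. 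Transporting through the diagram involution (so $z\mapsto v\define{}^\iota z={}^\iota(w_0w_r\ws)\in W_d^0$, cf. \Cref{defn:v} and \Cref{lem:vInWSd}), this is exactly condition $(6)$ of \Cref{sb} for $v$; hence $X_\J(v)$, equivalently $X_\J(w_0w_r\ws)$, is smooth and \Cref{conFibreId} applies.

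\textbf{Step 2: the union of opposite cells.} By \Cref{conFibreId}, $\phi(N^*_0X_\J(w_r))=\bigcup_{u\in\mathcal S}X^-_\J(u)$ with $\mathcal S=\{u\in W_d^0\mid u\le(w_rv)^{\dynk_0}\}$. The heart of the argument is to compute the minimal representative $(w_rv)^{\dynk_0}$ explicitly. From \Cref{lem:vInWSd} one has $l(w_rv)=l(w_r)+l(v)=\dim G/P$; to go further one must make the action of $\iota$ on the affine Weyl group $W$ of $\widetilde D_n$ concrete (note $\iota$ interchanges $W_0$ and $W_d$, neither being $\iota$-stable, by \Cref{actionW} and \Cref{iotaConj}), compute $w_rv$, and extract its $\dynk_0$-minimal representative. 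I expect $(w_rv)^{\dynk_0}$ already to lie in $W_d^0$, so that $\mathcal S$ is the full Bruhat interval below the single element $u^\ast\define(w_rv)^{\dynk_0}$; then $\bigcup_{u\in\mathcal S}X^-_\J(u)=X^-_\J(u^\ast)$, because any $u'\in W^\J$ with $u'\le u^\ast\le w_d^\J$ automatically lies in $W_d^0$ and hence in $\mathcal S$. (Should $(w_rv)^{\dynk_0}$ fail to lie in $W_d^0$, one instead identifies the maximum of $\mathcal S$ inside $W_d^0$ directly.)

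\textbf{Step 3: identification with the co-rank determinantal variety, and parity.} Finally, transport $X^-_\J(u^\ast)$ through $\iota$ to $X^-_\J({}^\iota u^\ast)$ with ${}^\iota u^\ast\in W_0^\J$, and verify that ${}^\iota u^\ast=w_{\overline n-r}$ in the sense of \Cref{form:skew}, i.e. that its string is $[(\overline n-r)+1,\dots,n,\ 2n-(\overline n-r)+1,\dots,2n]$. By \Cref{sec:ssdv} this yields $\phi(N^*_0X_\J(w_r))\cong X^-_\J(w_{\overline n-r})=\overline\Sigma^{sk,n}_{\overline n-r}$, under the identification of $\u_P=T^*_{\boldsymbol e}G/P$ with $M^{sk}_n$; the dichotomy defining $\overline n$ is inherited verbatim from the shape of $z$ found in Step 1. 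The expected main obstacle is Step 2: making the $\iota$-action on the affine Weyl group of $\widetilde D_n$ explicit, computing $w_rv$ and its $\dynk_0$-minimal representative, and showing that the resulting lower set in $W_d^0$ is an interval with a single top element — this is where the genuine combinatorics lies, and the two cases $n$ even and $n$ odd must be carried in parallel throughout, since they feed directly into the two cases for $\overline n$.
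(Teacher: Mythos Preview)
Your plan is correct and follows essentially the same route as the paper: verify smoothness of $X_\J(w_0w_r\ws)$ by exhibiting it as $w_\L^\J$ for an explicit connected $\L\subset\dynk_0$, apply \Cref{conFibreId}, show $(w_rv_r)^{\dynk_0}={}^\iota w_{\overline n-r}\in W_d^0$ so the union collapses to a single opposite cell, and identify that cell with $\overline\Sigma^{sk,n}_{\overline n-r}$. The only substantive point you leave open is the mechanism for Step~2; the paper handles it not by a permutation model for the affine Weyl group but by introducing recursively defined elements $x_i\in W_0$ with $w_r=x_{r-1}x_{r-3}\cdots x_1$ and $w_0w_r\ws=x_{\overline n-1}\cdots x_{r+1}$, and then proving commutation relations of the form ${}^\iota x_{\overline n-k}\,x_k=x_{k-2}\,{}^\iota x_{\overline n-k+2}$ (using $\iota(\alpha_i)=\alpha_{n-i}$) to slide the $\iota$-twisted factors past the untwisted ones and obtain ${}^\iota(w_rv_r)=w_{\overline n-r}v_{\overline n-r}$ directly.
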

\begin{proof}
Let $\dynk_0$ (resp. \dynk, resp. \J) be the Dynkin diagram $D_n$ (resp. $\widetilde D_n$, $D_n\backslash\{\alpha_n\}$), and $W_0$ (resp. $W$, $W_\J$) its Weyl group.
Recall that $\J\cong A_{n-1}$.
For $\mathcal L$ a sub-diagram of $\dynk_0$, we write $w_{\mathcal L}$ for the longest element in $W$ supported on $\mathcal L$, and $w_{\mathcal L}^\J$ for its minimal representative with respect to \J.
The longest elements $w_0\in W_0$ and $\ws\in W_\J$ are given by\begin{align*}
    w_0=w_{\overline n} &=[2n,\ldots,n+2,\overline n+1]\\
    \ws                 &=[n,\ldots,1]
\end{align*}
in the sense of \Cref{WeylEmbed}, see \cite{lakshmibai1978geometry}.
Let $w_r$ be as defined in \Cref{form:skew}, and set $v_r\define{}^\iota(w_0w_r\ws)$.
We have\begin{align}\label{wlj}
    w_0w_r\ws =[1,\ldots,r,\overline n+1,n+2,\ldots,2n-r] =w^\J_{\mathcal L}
\end{align}
where $\mathcal L=\left\{\alpha_{r+1},\ldots,\alpha_n\right\}$.
Hence $X_\J(w_0w_r\ws)$ is smooth, see \Cref{sb}. 
It now follows from \Cref{conFibreId} and \Cref{intersectw} that \begin{align*}
    \phi(N^*_0X_\J(w_r))=X^-_\J({}^\iota w_{\overline n-r})\cong X^-_\J(w_{\overline n-r})\cong\overline\Sigma_{\overline n-r}^{sk,n}.
\end{align*}
\end{proof}

It remains to prove \Cref{intersectw}.
The proof is obtained as a consequence of the following two lemmas.

\begin{lemma}
\label{rel1}
Consider $x_i\in W_0$ defined inductively as \begin{align}\label{rec:x}
\hspace{60pt}    x_i &=\begin{cases} s_n      &\text{for }i=n-1\\
                            s_{i+1}s_ix_{i+1} &\text{for }1\leq i<n-1
                    \end{cases}
\end{align}
Then $s_{i+2}s_{i+3}x_i=x_is_is_{i+1}$ for $1\leq i\leq n-4$.
\end{lemma}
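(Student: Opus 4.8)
The plan is to deduce this braid-type relation from the action of $x_i$ on two simple roots, via the standard identity $w s_\alpha w^{-1}=s_{w(\alpha)}$, valid for any $w\in W_0$ and any root $\alpha$. Concretely, it suffices to prove
\[
    x_i(\alpha_i)=\alpha_{i+2}\qquad\text{and}\qquad x_i(\alpha_{i+1})=\alpha_{i+3}.
\]
Indeed, since $\alpha_{i+2}$ and $\alpha_{i+3}$ are (positive) simple roots, these give $x_i s_i x_i^{-1}=s_{i+2}$ and $x_i s_{i+1}x_i^{-1}=s_{i+3}$, and multiplying the two equalities yields $x_i(s_is_{i+1})x_i^{-1}=s_{i+2}s_{i+3}$, i.e.\ $s_{i+2}s_{i+3}x_i=x_is_is_{i+1}$, which is the assertion.

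To prove the two root identities I would first unwind the recursion \Cref{rec:x} into the explicit expression $x_i=(s_{i+1}s_i)(s_{i+2}s_{i+1})\cdots(s_{n-1}s_{n-2})s_n$; in particular $x_i=s_{i+1}s_ix_{i+1}$ and $x_{i+1}=s_{i+2}s_{i+1}x_{i+2}$, and every simple reflection occurring in $x_j$ has index at least $j$. Working in the simply-laced diagram $D_n$, I then use only the elementary facts that $s_j$ fixes $\alpha_k$ whenever $k\notin\{j-1,j,j+1\}$, that $s_n$ fixes $\alpha_k$ whenever $k\neq n-2$, and that $s_j(\alpha_k)=\alpha_k+\alpha_j$ when $j,k$ are adjacent. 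Tracking $\alpha_i$: every reflection in $x_{i+2}$ fixes $\alpha_i$, so $x_{i+1}(\alpha_i)=s_{i+2}s_{i+1}(\alpha_i)=\alpha_i+\alpha_{i+1}+\alpha_{i+2}$, whence $x_i(\alpha_i)=s_{i+1}s_i(\alpha_i+\alpha_{i+1}+\alpha_{i+2})=\alpha_{i+2}$. Tracking $\alpha_{i+1}$: every reflection in $x_{i+3}$ fixes $\alpha_{i+1}$, so $x_{i+2}(\alpha_{i+1})=\alpha_{i+1}+\alpha_{i+2}+\alpha_{i+3}$, hence $x_{i+1}(\alpha_{i+1})=s_{i+2}s_{i+1}(\alpha_{i+1}+\alpha_{i+2}+\alpha_{i+3})=\alpha_{i+3}$, and finally $x_i(\alpha_{i+1})=s_{i+1}s_i(\alpha_{i+3})=\alpha_{i+3}$, since $\alpha_{i+3}$ is fixed by both $s_i$ and $s_{i+1}$.

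The hypothesis $1\le i\le n-4$ enters precisely to keep these manipulations inside the linear part of the $D_n$ diagram: it forces $i+3\le n-1$, so $s_{i+2}$ and $s_{i+3}$ are ordinary chain generators rather than $s_n$, and it prevents the exceptional reflection $s_n$ (which fails to commute only with $s_{n-2}$) from acting nontrivially on any of $\alpha_i,\alpha_{i+1},\alpha_{i+2},\alpha_{i+3}$; I would verify the boundary value $i=n-4$ explicitly, where $\alpha_{i+2}=\alpha_{n-2}$ neighbours $\alpha_n$, to confirm that no interference occurs. The only real work is this bookkeeping through the recursion — which needs to be carried out only two or three steps deep — and no conceptual obstacle arises.
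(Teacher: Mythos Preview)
Your argument is correct and complete: the identities $x_i(\alpha_i)=\alpha_{i+2}$ and $x_i(\alpha_{i+1})=\alpha_{i+3}$ hold exactly as you compute them (including at the boundary $i=n-4$, where $x_{i+3}=s_n$ and all the commutations you need still go through), and conjugation then gives the claim.

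Your route is genuinely different from the paper's. The paper unwinds the recursion three steps to write $x_i=s_{i+1}s_i\,s_{i+2}s_{i+1}\,s_{i+3}s_{i+2}\,x_{i+3}$, observes that $s_is_{i+1}$ commutes with $x_{i+3}$, and then pushes $s_{i+2}s_{i+3}$ through this product by a short sequence of braid moves on the six-letter prefix. Your approach replaces that word manipulation with a root computation via $ws_\alpha w^{-1}=s_{w(\alpha)}$, which has the advantage of being more conceptual: it exhibits the relation as the statement that $x_i$ shifts the two simple roots $\alpha_i,\alpha_{i+1}$ up by two. The paper's computation, on the other hand, stays entirely inside the Coxeter presentation and avoids any appeal to the root-space action. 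Both arguments are of comparable length and depth; yours makes the ``index shift by $2$'' phenomenon visible, while the paper's is a self-contained word calculation.
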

\begin{proof}
We see from the braid relations that $s_jx_i=x_is_j$ for $j\leq i-2$.
In particular, $s_is_{i+1}x_{i+3}=x_{i+3}s_is_{i+1}$.
Now \begin{align*}\hspace{14pt}   
    s_{i+2}s_{i+3}x_i   
    &=s_{i+2}s_{i+3}s_{i+1}s_i\,s_{i+2}s_{i+1}\,s_{i+3}s_{i+2}x_{i+3}   &\qquad\text{using \Cref{rec:x}}\\
    &=s_{i+1}s_{i+2}s_{i}s_{i+1}s_{i+3}s_{i+2}\,s_{i}s_{i+1}x_{i+3}&\qquad\text{using Braid relations}\\
    &=s_{i+1}s_{i}s_{i+2}s_{i+1}s_{i+3}s_{i+2}\,x_{i+3}s_{i}s_{i+1}&\qquad\text{using Braid relations}\\
    &=x_{i}s_{i}s_{i+1}&\qquad\text{using \Cref{rec:x}}
\end{align*}
\end{proof}

\begin{lemma}
\label{rel2}
Let $x_i$ be given by \Cref{rec:x}. 
For $3\leq j, k\leq\overline n-1$, we have
\begin{align*}
    {}^\iota x_{\overline n-k}\,x_k                     &=x_{k-2}\,{}^\iota x_{\overline n-k+2}\\
    {}^\iota x_{\overline n-k}\,x_{k} x_{k-2}\ldots x_j &=x_{k-2}x_{k-4}\ldots x_{j-2}{}^\iota x_{\overline n-j}
\end{align*}
\end{lemma}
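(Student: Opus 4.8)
The plan is to prove the first identity directly from the recursion \Cref{rec:x} together with \Cref{specificiota} and the braid relations, and then to deduce the second identity by a straightforward induction on the length of the chain $x_k x_{k-2}\cdots x_j$. For the first identity, I would start by recording the commutation behaviour of the elements $x_i$: from the braid relations (exactly as in the proof of \Cref{rel1}) one has $s_m x_i = x_i s_m$ whenever $m \leq i-2$, and more generally the ``tail'' of $x_i$ is supported on $\{\alpha_i,\alpha_{i+1},\ldots\}$, so $x_i$ commutes with any simple reflection indexed below $i-1$. Applying $\iota$ and using $\iota(\alpha_m)=\alpha_{n-m}$ (\Cref{specificiota}) gives the mirror statement: ${}^\iota x_j$ is built from reflections indexed in $\{n-j, n-j-1,\ldots\}$ downwards, so ${}^\iota x_{\overline n - k}$ involves only indices $\geq k$ when $\overline n = n$, and one handles the $n$ odd case by the same bookkeeping with $\overline n = n-1$. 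The point is that ${}^\iota x_{\overline n - k}$ and $x_k$ interact only through a bounded ``collision zone'' of indices near $k$, and \Cref{rel1} — in the form $s_{i+2}s_{i+3}x_i = x_i s_i s_{i+1}$ — together with ${}^\iota$ of it, is precisely the tool that lets one push ${}^\iota x_{\overline n-k}$ past $x_k$ at the cost of lowering both subscripts by $2$, producing $x_{k-2}\,{}^\iota x_{\overline n - k + 2}$.

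Concretely, I would expand $x_k = s_{k+1}s_k x_{k+1}$ via \Cref{rec:x}, expand ${}^\iota x_{\overline n - k} = {}^\iota(s_{\overline n - k + 1} s_{\overline n - k} x_{\overline n - k + 1}) = s_{k'-1}s_{k'} \,{}^\iota x_{\overline n - k + 1}$ where $k' = n - (\overline n - k) = k$ or $k+1$ depending on parity of $n$, and then commute the far-apart reflections through each other, using the single braid relation $s_m s_{m\pm1} s_m = s_{m\pm1}s_m s_{m\pm1}$ in the collision zone exactly the number of times dictated by \Cref{rel1}. After the dust settles the leftover factors reassemble, again by \Cref{rec:x}, into $x_{k-2}$ on the left and ${}^\iota x_{\overline n - k + 2}$ on the right. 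This is the same style of manipulation as \Cref{rel1}, just one level more intricate, so I would present it as a displayed chain of equalities annotated with ``using \Cref{rec:x}'', ``using \Cref{rel1}'', and ``braid relations''.

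For the second identity, I would induct on the number of factors $x_k x_{k-2}\cdots x_j$. The base case (a single factor, $j=k$) is exactly the first identity. For the inductive step, suppose
\begin{align*}
    {}^\iota x_{\overline n - k}\, x_k x_{k-2}\cdots x_{j} = x_{k-2}x_{k-4}\cdots x_{j-2}\,{}^\iota x_{\overline n - j};
\end{align*}
to prove the statement with $x_{j-2}$ appended on the right, write ${}^\iota x_{\overline n - k}\, x_k \cdots x_{j} x_{j-2} = \big(x_{k-2}\cdots x_{j-2}\,{}^\iota x_{\overline n - j}\big) x_{j-2}$ by the inductive hypothesis, and then apply the first identity with $k$ replaced by $j$ to rewrite ${}^\iota x_{\overline n - j}\, x_{j-2}$ — wait, here one needs the first identity in the shifted form ${}^\iota x_{\overline n - j}\, x_{j-2} = x_{j-4}\,{}^\iota x_{\overline n - j + 2}$, which is the first identity with $k = j-2$ — so that the right-hand side becomes $x_{k-2}\cdots x_{j-2}\, x_{j-4}\,{}^\iota x_{\overline n - j + 2}$, and after recalling that the $x_i$ with indices differing by $\geq 2$ need not commute one checks the resulting word is literally $x_{k-2}x_{k-4}\cdots x_{j-4}\,{}^\iota x_{\overline n - (j-2)}$, as required. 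One must also verify the hypotheses $3 \leq j,k \leq \overline n - 1$ are preserved and that the indices $\overline n - k, \overline n - j$ stay in the range where $x_{\bullet}$ is defined; I would check these bounds explicitly.

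The main obstacle I anticipate is the first identity: keeping honest track of which simple reflections appear in the fully expanded words ${}^\iota x_{\overline n - k}$ and $x_k$, and verifying that the braid moves needed to slide one past the other are exactly those supplied by \Cref{rel1} and its $\iota$-image — in particular getting the parity correction $\overline n = n$ vs. $\overline n = n-1$ right, since the ``reflected'' index $n - (\overline n - k)$ differs from $k$ when $n$ is odd, which shifts the collision zone by one and changes which braid relations fire. Once that single identity is nailed down with the correct index bookkeeping, the second identity is a mechanical induction.
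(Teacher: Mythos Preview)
Your overall strategy matches the paper's: derive the first identity from the recursion, the $\iota$-translation of indices, and braid relations (with \Cref{rel1} for the odd-$n$ shift), then iterate for the second. The paper's execution of the first identity is, however, more direct than your plan: rather than expanding both $x_k$ and ${}^\iota x_{\overline n-k}$ one step each, it leaves $x_k$ untouched and expands ${}^\iota x_{n-k}$ \emph{twice}, which (after \Cref{specificiota}) peels off exactly the prefix $s_{k-1}s_ks_{k-2}s_{k-1}$ and leaves the tail ${}^\iota x_{n-k+2}$ supported on $\langle s_m\mid m\leq k-2\rangle$. This tail then commutes past $x_k$ wholesale --- the key commutation ${}^\iota x_i\,x_j=x_j\,{}^\iota x_i$ for $i+j\geq n+2$, which you should isolate explicitly --- and one swap $s_ks_{k-2}=s_{k-2}s_k$ exhibits the prefix as $x_{k-2}$ via \Cref{rec:x}. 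Your one-step plan leaves ${}^\iota x_{n-k+1}$ (indices $\leq k-1$) against an $s_k$ it does not commute with, so you would have to expand again anyway. For $n$ odd the paper then expands ${}^\iota x_{n-k-1}$ once more and invokes \Cref{rel1}, exactly as you anticipate.

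There is a genuine error in your induction step. You want to apply the first identity to ${}^\iota x_{\overline n-j}\,x_{j-2}$, but the first identity has the form ${}^\iota x_{\overline n-K}\,x_K=\cdots$: the two subscripts must sum to $\overline n$. Here they sum to $\overline n-2$, so the identity does not apply; your claimed substitution ``$k=j-2$'' actually gives ${}^\iota x_{\overline n-j+2}\,x_{j-2}=x_{j-4}\,{}^\iota x_{\overline n-j+4}$, with a different left factor. The remedy is to iterate from the \emph{left} end of the chain, as the paper does: apply the first identity to ${}^\iota x_{\overline n-k}\,x_k$ to get $x_{k-2}\,{}^\iota x_{\overline n-k+2}$, then to ${}^\iota x_{\overline n-k+2}\,x_{k-2}$ (subscripts now sum to $\overline n$ again), and so on down the chain. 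At each step the $x$-subscript drops by $2$ and the ${}^\iota x$-subscript rises by $2$, preserving the sum $\overline n$ that the first identity requires.
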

\begin{proof}
The second equality follows from repeated applications of the first.
Observe first that $x_k\in\left\langle s_j\mid j\geq k\right\rangle$, or equivalently, ${}^\iota x_{n-k}\in\left\langle s_j\mid j\leq k\right\rangle$. 
Consequently, the braid relations yields ${}^\iota x_i x_j=x_j{}^\iota x_i$ whenever $i+j\geq n+2$.
Now \begin{align*}
\hspace{4pt}   {}^\iota x_{n-k}\,x_k   &={}^\iota s_{n-k+1} {}^\iota s_{n-k} {}^\iota s_{n-k+2} {}^\iota s_{n-k+1} {}^\iota x_{n-k+2}\,x_k &\qquad\text{using \Cref{rec:x}}\\
        &=s_{k-1} s_k s_{k-2} s_{k-1} {}^\iota x_{n-k+2} x_k&\qquad\text{using \Cref{specificiota}}\\
        &=s_{k-1} s_{k-2} s_k s_{k-1} x_k {}^\iota x_{n-k+2}&\\
        &=x_{k-2} {}^\iota x_{n-k+2}                        &\qquad\text{using \Cref{rec:x}}
\end{align*}
This proves the claim when $n$ is even.
Suppose $n$ is odd, so that $\overline n=n-1$ and $k\leq n-2$.
Then \begin{align*}
\hspace{20pt}{}^\iota x_{n-k-1}\,x_k &={}^\iota s_{n-k}{}^\iota s_{n-k-1}{}^\iota x_{n-k}\,x_k  &\qquad\text{using \Cref{rec:x}}\\
                            &=s_k s_{k+1}\,x_{k-2}{}^\iota x_{n-k+2}&\qquad\text{using \Cref{specificiota}}\\
                            &=x_{k-2}s_{k-2}s_{k-1}{}^\iota x_{n-k+2}   &\qquad\text{using \Cref{rel1}}\\
                            &=x_{k-2}{}^\iota x_{n-k+1} &\qquad\text{using \Cref{rec:x}}
\end{align*}
This proves the claim when $n$ is odd.
\end{proof}

\begin{prop}
\label{intersectw}
For $w_r$ given by \Cref{form:skew}, and $v_r={}^\iota(w_0w_r\ws)$, we have 
\begin{align*}
    (w_rv_r)^{\dynk_0}=w_rv_r{}^\iota v_{\overline n-r}^{-1}={}^\iota w_{\overline n-r}\in W_{\J\cup\{\alpha_0\}}^0
\end{align*}
Consequently, ${}^\iota w_{\overline n-r}$ is the unique maximal element in $\left\{u\in W_{\J\cup\{\alpha_0\}}^0\mid u\leq (w_rv_r)^{\dynk_0}\right\}$.
\end{prop}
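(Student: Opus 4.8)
The plan is to reduce everything to the single group identity $w_rv_r\,({}^\iota v_{\overline n-r})^{-1}={}^\iota w_{\overline n-r}$ and to prove that by a reduced-word computation. First, since $v_{\overline n-r}={}^\iota(w_0w_{\overline n-r}\ws)$ and ${}^\iota$ is an involution, ${}^\iota v_{\overline n-r}=w_0w_{\overline n-r}\ws$ lies in $W_0=W_{\dynk_0}$, and the identity above is equivalent to the factorization $w_rv_r={}^\iota w_{\overline n-r}\cdot{}^\iota v_{\overline n-r}$ with the second factor in $W_{\dynk_0}$. Granting this factorization, the rest is formal. By \Cref{wrbelongs} applied with $\overline n-r$ in place of $r$ — legitimate, since $r$ and hence $\overline n-r$ is even — one has $w_{\overline n-r}(\alpha)>0$ for all $\alpha\in\J\cup\{\alpha_0\}$; applying $\iota$ and \Cref{specificiota} turns this into ${}^\iota w_{\overline n-r}(\alpha)>0$ for all $\alpha\in\iota(\J\cup\{\alpha_0\})=\J\cup\{\alpha_n\}=\dynk_0$, so ${}^\iota w_{\overline n-r}\in W^{\dynk_0}$ by \Cref{eq:minRep}. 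The factorization then expresses $w_rv_r$ as an element of $W^{\dynk_0}$ times an element of $W_{\dynk_0}$, so by uniqueness of minimal coset representatives $(w_rv_r)^{\dynk_0}={}^\iota w_{\overline n-r}$, which is exactly the asserted chain $(w_rv_r)^{\dynk_0}=w_rv_r\,({}^\iota v_{\overline n-r})^{-1}={}^\iota w_{\overline n-r}$. Finally, applying $\iota$ to $w_{\overline n-r}\in W_0\cap W^\J$ and using \Cref{actionW} and \Cref{lem:vInWSd}(1) gives ${}^\iota w_{\overline n-r}\in W_d\cap W^\J=W_d^0=W^0_{\J\cup\{\alpha_0\}}$; this element being equal to $(w_rv_r)^{\dynk_0}$, it is trivially the unique maximum of $\{u\in W^0_{\J\cup\{\alpha_0\}}\mid u\le(w_rv_r)^{\dynk_0}\}$, which proves the ``consequently'' clause.

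It remains to prove the factorization $w_rv_r={}^\iota w_{\overline n-r}\cdot{}^\iota v_{\overline n-r}$. The first task is to establish usable reduced expressions. Using \Cref{wlj} — which identifies $w_0w_r\ws$ with the minimal representative $w_{\mathcal L}^\J$ of the longest element of $W_{\mathcal L}$, $\mathcal L=\{\alpha_{r+1},\dots,\alpha_n\}$ — together with the braid relations recalled in \Cref{sec:det} and the permutation description \Cref{form:skew}, one writes $w_r$ and $v_r={}^\iota(w_0w_r\ws)$, and similarly $w_{\overline n-r}$ and $v_{\overline n-r}$, as reduced products assembled from the elements $x_i$ of \Cref{rec:x} and their images ${}^\iota x_i$. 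The word for $w_rv_r$ obtained by concatenation is then reduced, because $l(w_rv_r)=l(w_r)+l(v_r)=\dim\gl/\para$ by \Cref{lem:vInWSd}(3). One then rewrites this word: \Cref{rel1} is used to slide single reflections $s_j$ across the $x_i$, and the telescoping identity of \Cref{rel2} — precisely the move that carries an ${}^\iota x$ past a descending block $x_kx_{k-2}\cdots x_j$ while shifting indices — is applied repeatedly to transform the word into the concatenation of the reduced words for ${}^\iota w_{\overline n-r}$ and for ${}^\iota v_{\overline n-r}$; since no length is lost at any step, this yields the desired factorization. The bookkeeping splits on the parity of $n$, which is what $\overline n$ records, matching the two cases in the proof of \Cref{rel2}.

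The main obstacle is this last word rewriting: one must first pin down the precise shape of the reduced word for $w_rv_r$, and then keep it under control through the iterated applications of \Cref{rel1,rel2}, so that the surviving $x_i$'s and ${}^\iota x_i$'s reassemble into the reduced words for ${}^\iota w_{\overline n-r}$ and ${}^\iota v_{\overline n-r}$, uniformly in the parity of $n$. Everything else — setting up the reduced expressions and the formal deduction of the minimal-representative statement — is routine given the results already in hand.
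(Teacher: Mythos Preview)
Your plan is correct and follows essentially the same route as the paper: you reduce to the group identity $w_rv_r=({}^\iota w_{\overline n-r})({}^\iota v_{\overline n-r})$, prove it by writing $w_r$ and $w_0w_r\ws$ as products of the $x_i$ of \Cref{rec:x} and applying \Cref{rel2}, and then read off the minimal-representative statement from ${}^\iota v_{\overline n-r}\in W_0$ and ${}^\iota w_{\overline n-r}\in W^{\dynk_0}$. Two small remarks: the paper carries out the word computation on the $\iota$-twisted side, showing ${}^\iota w_r\,{}^\iota v_r=w_{\overline n-r}v_{\overline n-r}$ directly, which is of course equivalent to your version; and in that computation only \Cref{rel2} is invoked explicitly, \Cref{rel1} entering solely through the proof of \Cref{rel2} in the odd-$n$ case, so you need not plan on using \Cref{rel1} separately.
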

\begin{proof}
Let $x_i$ be as in \Cref{rec:x}.
We have the following formulae, which are easily verified inductively:\begin{align*}
    x_i         &=[1,\ldots,i-1,i+2,\ldots,n-2,2n-i,2n-i+1]\\
    w_r         &=x_{r-1} x_{r-3}\ldots x_1 \\
    w_0w_r\ws   &=x_{\overline n-1}x_{\overline n-3}\ldots x_{r+1}
\end{align*}
Now\begin{align*}\hspace{14pt}
    {}^\iota w_r{}^\iota v_r&={}^\iota x_{r-1}{}^\iota x_{r-3}\ldots{}^\iota x_1 x_{\overline n-1}x_{\overline n-3}\ldots x_{r+1}\\
                            &=x_{\overline n-r-1}x_{\overline n-r-3}\ldots x_1\,{}^\iota x_{\overline n-1}{}^\iota x_{\overline n-3}\ldots{}^\iota x_{\overline n-r+1}&\qquad\text{using \Cref{rel2}.}\\
                            &=w_{\overline n-r} v_{\overline n-r}
\end{align*}
It follows that ${}^\iota w_r{}^\iota v_rv_{\overline n-r}^{-1}=w_{\overline n-r}$, hence $w_rv_r{}^\iota v_{\overline n-r}^{-1}={}^\iota w_{\overline n-r}$.

Next, \Cref{wrbelongs} yields\begin{align*}
    w_{\overline n-r}\in W_0^{\J\cup\{\alpha_0\}}\implies {}^\iota w_{\overline n-r}\in W_{\J\cup\{\alpha_0\}}^0\subset W^{\dynk_0}
\end{align*}
Further, since ${}^\iota v_{\overline n-r}\in W_0$ (see \Cref{wlj}), we have $w_rv_r={}^\iota w_{\overline n-r}\,\mod\, W_0$.
Together, we deduce $(w_rv_r)^{\dynk_0}={}^\iota w_{\overline n-r}$.
\end{proof}


\addcontentsline{toc}{section}{Bibliography}
\bibliographystyle{amsart}
\bibliography{biblio}

\noindent
 {\scshape V. Lakshmibai}\\
 {\scshape Department of Mathematics, Northeastern University,\\ Boston, MA 02115, USA}.\\
 {\itshape E-mail address}: \texttt{luckylakshmibai@gmail.com}

\noindent
 {\scshape Rahul Singh}\\
 {\scshape Department of Mathematics, Northeastern University,\\ Boston, MA 02115, USA}.\\
 {\itshape E-mail address}: \texttt{singh.rah@husky.neu.edu}
\end{document}